\newtheorem{thm}{Theorem}[section]
\newtheorem{cor}[thm]{Corollary}
\newtheorem{lem}[thm]{Lemma}
\newtheorem{prop}[thm]{Proposition}
\theoremstyle{definition}
\newtheorem{rem}[thm]{Remark}
\newcommand{\mB}{\mathcal B}
\newcommand{\FF}[1]{\mathbb F_{#1}}
\newcommand{\ra}{\rightarrow}
\newcommand{\fX}{\mathfrak X}
            \def\la{\langle} 
            \def\ra{\rangle} 
            \def\a{\alpha}
            \def\O{\Omega} 
            \def\l{\lambda}
            \def\F{\mathbb{F}} 
            \def\N{\mathbb{N}} 
            \def\Z{\mathbb{Z}}
            \def\d{\delta} 
            \def\go{\rightarrow} 
     \def\g{\gamma} 
     \def\G{\Gamma}
\begin{document}

\title{Base sizes of primitive groups: bounds with explicit constants}

\author{Zolt\'an Halasi} \address{Department of Algebra and Number
  Theory, E\"otv\"os University, P\'azm\'any P\'eter s\'et\'any 1/c,
  H-1117, Budapest, Hungary \and Alfr\'ed R\'enyi Institute of
  Mathematics, Hungarian Academy of Sciences, Re\'altanoda utca 13-15, H-1053, Budapest, Hungary}
\email{zhalasi@cs.elte.hu and halasi.zoltan@renyi.mta.hu}

\author{Martin W. Liebeck}
\address{Department of Mathematics, Imperial College, London SW7 2BZ, United Kingdom}
\email{m.liebeck@imperial.ac.uk}

\author{Attila Mar\'oti}
\address{Alfr\'ed R\'enyi Institute of Mathematics, Hungarian Academy of Sciences, Re\'altanoda utca 13-15, H-1053, Budapest, Hungary}
\email{maroti.attila@renyi.mta.hu}

\date{\today}
\keywords{minimal base size, primitive permutation group, classical group, irreducible linear group}

\subjclass[2010]{20B15, 20C99, 20B40.}

\thanks{The work of the first and third authors on the project leading to this application has received funding from the European Research Council (ERC) under the European Union's Horizon 2020 research and innovation programme (grant agreement No 741420, 617747, 648017). The first and third authors were supported by the J\'anos Bolyai Research Scholarship of the Hungarian Academy of Sciences, were funded by a Humboldt Return Fellowship and also by the National Research, Development and Innovation Office (NKFIH) Grant No.~K115799.}
	
\begin{abstract}
We show that the minimal base size $b(G)$ of a finite primitive permutation group $G$ of degree $n$ is at most $2 (\log |G|/\log n) + 24$. This bound is asymptotically best possible since there exists a sequence of primitive permutation groups $G$ of degrees $n$ such that $b(G) = \lfloor 2 (\log |G|/\log n) \rceil - 2$ and $b(G)$ is unbounded. As a corollary we show that a primitive permutation group of degree $n$ that does not contain the alternating group $\mathrm{Alt}(n)$ has a base of size at most $\max\{\sqrt{n} , \ 25\}$.  
\end{abstract}	
\maketitle

\section{Introduction}
Let $G$ be a permutation group acting on a finite set $\Omega$ of size
$n$. A subset $\Sigma$ of $\Omega$ is called a {\it base} for $G$ if the
pointwise stabilizer of $\Sigma$ in $G$ is
trivial. The minimal size of a base for $G$
on $\Omega$ is denoted by $b_{\Omega}(G)$ or by $b(G)$ in case $\Omega$ is clear from the context.

The minimal base size of a primitive permutation group has been much 
investigated. Already in the nineteenth century Bochert \cite{Bochert}
showed that $b(G) \leq n/2$ for a primitive permutation group $G$ of
degree $n$ not containing $\mathrm{Alt}(n)$. This bound was substantially
improved by Babai to $b(G) < 4 \sqrt{n} \log n$, for uniprimitive
groups $G$, in \cite{BabaiAnnals}, and to the estimate $b(G) < 2^{c
  \sqrt{\log n}}$ for a universal constant $c$, for doubly
transitive groups $G$ not containing $\mathrm{Alt}(n)$, in
\cite{BabaiInvent}. (Here and throughout the paper the base of the logarithms is $2$ unless otherwise stated.) 
The latter bound was improved by Pyber
\cite{PyberDoubly} to $b(G) < c {(\log n)}^{2}$ where $c$ is a
universal constant. These estimates are elementary in the sense that
their proofs do not require the Classification of Finite Simple Groups
(CFSG). Using CFSG, Liebeck \cite{Liebeck84} classified all primitive
permutation groups $G$ of degree $n$ with $b(G) \geq 9 \log n$.


It is easy to see that any permutation group $G$ of degree $n$ satisfies $|G| < n^{b(G)}$, and hence 
$b(G) > \log |G| / \log n$. A well-known question of Pyber \cite[Page 207]{pyber}, going back to 1993, asks whether there exists a universal constant $c$ such that $b(G) < c (\log |G| / \log n)$ for all {\it primitive} groups $G$. This question generalizes other conjectures in the area: for example, the Cameron-Kantor conjecture, which asserts that every almost simple primitive group in a non-standard action has base size bounded by a universal constant $C$; and Babai's conjecture, that there is a function $f:\N\go \N$ such that any primitive group that has no alternating or classical composition factor of degree or dimension greater than $d$, has base size less than $f(d)$. 

The Cameron-Kantor conjecture was proved in \cite{LS99} (and in a strong form with $C=7$ in \cite{Burness}, \cite{BurnessLiebeckShalev}). Babai's conjecture was proved in \cite{GSS} with $f$ a quadratic function (improved to a linear function in \cite{LS99}).

Despite a great deal of attention, Pyber's conjecture remained open until very recently, when it was proved in \cite{DHM}. It is shown in \cite{DHM} that there exists a universal constant $c>0$ such that for every primitive permutation group $G$ of degree $n$ we have 
\[
b(G) < 45 (\log |G| / \log n) + c.
\]
To obtain a more explicit, usable bound, one would like to reduce the multiplicative constant 45 in the above, and also to estimate the constant $c$. 

In this paper we achieve this aim. Our main result is the following. 

\begin{thm}
\label{mainresult}
Let $G$ be a primitive permutation group of degree $n$. Then the minimal base size $b(G)$ satisfies
\[
b(G) \le 2 \frac{\log |G|}{\log n} + 24.
\] 
\end{thm}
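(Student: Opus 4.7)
The plan is to follow the O'Nan--Scott reduction that underlies the proof of Pyber's conjecture in \cite{DHM}, but to sharpen the estimates at every step so that the multiplicative constant drops from $45$ all the way to $2$ (which, as noted in the abstract, is asymptotically optimal). Accordingly, I would split the argument according to the socle type of $G$: affine, almost simple, simple diagonal, product action (wreath), and twisted wreath.

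For almost simple $G$ with a non-standard action, the theorem of Burness and coauthors \cite{Burness,BurnessLiebeckShalev} gives $b(G)\le 7$, which is within budget since $\log|G|/\log n\ge 1$ always. For the standard actions---classical groups on subspaces or pairs of subspaces, and alternating/symmetric groups on subsets or partitions---I would use the explicit base-size formulae now available in the literature and compare them termwise against $\log|G|/\log n$. In each family the ratio $b(G)/(\log|G|/\log n)$ tends to a small explicit constant (at most $2$), and the inequality holds with a great deal of slack in the additive $24$.

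In the affine case $G=V\rtimes H$ with $V=\FF{p}^d$ and $H\le\mathrm{GL}(V)$ irreducible, a base for $G$ corresponds, up to translation, to a tuple of vectors on which $H$ acts regularly; hence $b(G)\le b^*(H)+1$, where $b^*(H)$ is the minimum $t$ such that $H$ admits a regular orbit on $V^t$. Here the key input is the Hal\'asi--Podoski-type bound $b^*(H)\le \lceil\log|H|/\log|V|\rceil+C$ with an explicit $C$, yielding $b(G)\le \log|G|/\log n+O(1)$---strictly better than the theorem asks. This leaves the factor of $2$ in reserve for the harder cases.

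The main obstacle is the product-action (wreath) case $G\le K\wr S_k$ acting on $\Omega^k$, where $K$ is primitive on $\Omega$ of degree $m$ and $n=m^k$. I would induct on $k$: one takes a base for $K$ on $\Omega$, copies it across coordinates, and adjoins roughly $\lceil\log_m k\rceil$ further points to destroy the quotient action of $S_k$. Since $\log|G|/\log n\approx \log|K|/\log m+\log(k!)/(k\log m)$, the arithmetic produces the desired inequality whenever $b(K,\Omega)\le 2\log|K|/\log m+O(1)$, which is available by induction together with the previous cases. The delicate bookkeeping is to ensure that the additive constant $24$ is \emph{preserved} through the induction rather than accumulating a factor depending on the depth of the wreath product; this forces one to treat the base of the recursion (where $K$ is almost simple or affine) with a strictly sharper additive constant, and to arrange the wreath step so that it loses only a bounded amount. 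The diagonal and twisted-wreath cases can then be handled by similar arguments, with simple diagonal type being easy because $\log|G|/\log n\ge 1$ automatically while $b(G)=O(\log k)$ there.
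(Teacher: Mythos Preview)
Your overall O'Nan--Scott strategy matches the paper's, but your affine case contains a genuine error. You assert a ``Halasi--Podoski-type bound'' $b^*(H) \le \lceil \log|H|/\log|V| \rceil + C$ for irreducible $H \le GL(V)$, yielding $b(G) \le \log|G|/\log n + O(1)$ and leaving ``the factor of $2$ in reserve for the harder cases''. No such bound exists: for $H = Sp_d(q)$ on its natural module $V = \F_q^d$, Proposition~\ref{sp} shows $b_V(H) = d$ exactly, while $\log|H|/\log|V| \sim (d+1)/2$, so the ratio tends to $2$. The affine case is precisely where the multiplicative constant $2$ is forced, not where it sits in reserve. Establishing $b(H) \le 2\log|H|/\log|V| + 17$ for irreducible $H$ occupies all of Section~5 and is the hardest part of the paper: primitive linear groups require a tensor-decomposition argument (Propositions~\ref{corr} and \ref{basest}) built on explicit base bounds for the quasisimple and Fitting-absolutely-irreducible cases (Propositions~\ref{quasi} and \ref{fitt}), while imprimitive linear groups need the $\pmod{T_V}$-representation machinery of \cite{DHM}.

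Two smaller points. The constant $2$ is also asymptotically tight for $\mathrm{Sym}(m)$ on $k$-subsets (Proposition~\ref{2k/(k+1)}), so the almost simple standard actions do not enjoy ``a great deal of slack'' either; getting the additive constant down to $16$ there (Proposition~\ref{maink}) takes real work. And in the product-action case the inner primitive group $K$ is almost simple or of diagonal type---not affine, and never itself of product type---so there is no recursive wreath depth to manage: one only needs the almost simple and diagonal bounds to hold with sharp enough additive constants ($+16$ and $+3$, respectively) that a single pass through the wreath construction lands within $+24$.
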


The multiplicative constant 2 in Theorem \ref{mainresult} is best possible, as is shown by the following. 

\begin{prop}
\label{acc}\leavevmode
\begin{itemize}
\item[{\rm (i)}] For every positive integer $k$ there exists a sequence of finite primitive permutation groups $G_n$ of degrees $n$ such that as $n \to \infty$,
\[
(b(G_n) \log n) / \log |G_n| \to 2k/(k+1).
\]
\item[{\rm (ii)}] There is an infinite sequence of primitive permutation groups $H_n$ of degrees $n$ such that 
$b(H_n) = \lfloor 2 (\log |H_n|/\log n) \rceil - 2$ for all $n$ and $b(H_{n})$ is unbounded. 
\end{itemize}  
\end{prop}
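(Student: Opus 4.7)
My plan is to take both families from the action of $\mathrm{Sym}(m)$ on the set $\Omega = \binom{[m]}{k}$ of $k$-element subsets of $[m]$, which is primitive for $2 \le k < m/2$, has degree $n = \binom{m}{k}$, and order $|G| = m!$. The key combinatorial step, which I would establish first, is the reformulation of a base: subsets $S_1, \ldots, S_b \in \binom{[m]}{k}$ form a base if and only if the $m \times b$ $\{0,1\}$-incidence matrix (rows indexed by $[m]$, columns by the $S_i$) has pairwise distinct rows, i.e.\ the $S_i$ form a separating family on $[m]$. A double count then gives
\[
b(G) = \left\lceil \frac{2(m-1)}{k+1}\right\rceil
\]
for $m$ sufficiently large relative to $k$: the lower bound compares the total weight $kb$ (from column sums) against the minimum total weight $2m - b - 2$ of any $m$ distinct vectors in $\{0,1\}^b$ (attained by the zero vector, the $b$ unit vectors, and $m-1-b$ weight-$2$ vectors), while the upper bound is realised by an explicit construction using the zero vector, singletons, and a near-perfect matching on $[b]$.

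For (i), I fix $k$ and let $m\to\infty$ along integers where the formula holds on the nose. Stirling's estimates $\log(m!) \sim m\log m$ and $\log\binom{m}{k} \sim k\log m$ give $\log|G_n|/\log n \sim m/k$, while $b(G_n)\sim 2m/(k+1)$, whence $(b(G_n)\log n)/\log|G_n| \to 2k/(k+1)$, as required.

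For (ii), I use the same family but let both $k$ and $m$ grow. Sharper Stirling bounds show that in the regime $m \asymp k(k+1)/2$, the difference $2\log(m!)/\log\binom{m}{k} - \lceil 2(m-1)/(k+1)\rceil$ stays near $2$; more precisely, it lands within $1/2$ of $2$ for a window of $m$-values of length $\Theta(k)$. Selecting one $m_k$ per $k$ from this window produces an infinite sequence $H_n$ with base size $b(H_n) = \lceil 2(m_k-1)/(k+1)\rceil \sim k \to \infty$ and satisfying the required identity $b(H_n) = \lfloor 2\log|H_n|/\log n \rceil - 2$.

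The main obstacle is this last integer identity: unlike the clean asymptotics in (i), we need two-sided Stirling bounds accurate to $O(1)$ (not merely leading order) to pinpoint $2\log(m!)/\log\binom{m}{k}$ modulo $1$ and verify that its nearest integer equals $b(H_n)+2$. Guaranteeing that an infinite family of admissible pairs $(m, k)$ exists requires a continuity/intermediate-value argument tracking how this fractional part varies as $m$ sweeps through the window at each fixed $k$, together with a careful handling of the congruence conditions needed for the upper-bound construction of a separating family of the extremal size.
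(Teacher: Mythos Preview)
Your approach to part (i) is correct and coincides with the paper's argument (Proposition~\ref{2k/(k+1)}): the action of $\mathrm{Sym}(m)$ on $k$-subsets with $k$ fixed, combined with the exact formula $b(m,k)=\lceil 2(m-1)/(k+1)\rceil$ and Stirling, gives the limit $2k/(k+1)$.

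Your approach to part (ii), however, does not work. The central claim --- that for $m\asymp k(k+1)/2$ the difference $2\log(m!)/\log\binom{m}{k}-\lceil 2(m-1)/(k+1)\rceil$ stays near~$2$ --- is false. First, in that regime $k^2>m$, so the exact base-size formula you rely on is not known to hold (Theorem~\ref{precise}(ii) requires $k^2\le m$). More seriously, even in the admissible range $m\asymp k^2$ the asymptotics are not what you assert: with $m=ck^2$ one has $\log(m!)\sim 2ck^2\log k$ while $\log\binom{m}{k}\sim k\log(m/k)\sim k\log k$, so
\[
\frac{2\log(m!)}{\log\binom{m}{k}}\sim 4ck,\qquad b\sim \frac{2m}{k+1}\sim 2ck,
\]
and the difference is $\sim 2ck\to\infty$, not bounded. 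A similar computation shows that for \emph{every} path $(m,k)\to\infty$ along which $b(m,k)\to\infty$, the gap $2\log|G|/\log n - b$ tends to infinity for the subset action; in particular no sequence drawn from this family can satisfy $b=\lfloor 2\log|G|/\log n\rceil-2$ with $b$ unbounded. The heuristic ``difference $\approx b/k$'' breaks down precisely because when $k$ grows, $\log\binom{m}{k}$ is only about $k\log(m/k)$, not $k\log m$.

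The paper establishes (ii) by an entirely different construction (Proposition~\ref{sp}): the affine group $G=V\cdot Sp_d(q)$ of degree $n=q^d$. Here $b(G)=d+1$, while $|G|=q^d|Sp_d(q)|$ gives $2\log|G|/\log n = d+3+o(1)$ as $q\to\infty$, so for each even $d$ and all sufficiently large $q$ one obtains $b(G)=\lfloor 2\log|G|/\log n\rceil-2$ exactly, with $b(G)=d+1$ unbounded.
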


A corollary of Theorem \ref{mainresult} and its proof is the following.

\begin{cor}
\label{maincorollary}
Let $G$ be a primitive permutation group of degree $n$ not containing $\mathrm{Alt}(n)$. Then $G$ has a base of size at most $\max\{\sqrt{n} , \ 25\}$.
\end{cor}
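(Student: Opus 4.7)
The plan is to combine Theorem~\ref{mainresult} with Mar\'oti's upper bound on the orders of primitive permutation groups. Mar\'oti's theorem asserts that for a primitive group $G$ of degree $n$ not containing $\mathrm{Alt}(n)$, one of the following holds: (a) $G$ is one of the Mathieu groups $M_{11}, M_{12}, M_{23}$ or $M_{24}$ (so $n\le 24$); (b) $G\le \mathrm{Sym}(m)\wr\mathrm{Sym}(r)$ in product action of degree $\binom{m}{k}^r$, and contains $\mathrm{Alt}(m)^r$; or (c) $|G|\le n^{1+\log n}$.

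In the generic case (c), feeding Mar\'oti's bound into Theorem~\ref{mainresult} yields
\[
b(G)\le 2(1+\log n)+24 \;=\; 2\log n + 26.
\]
An elementary calculation shows $\sqrt{n}\ge 2\log n+26$ whenever $n\ge 2401$, so for such $n$ one obtains $b(G)\le \sqrt{n}$, as required. Case (a) is immediate: $n\le 24 < 25$ and the Mathieu groups have small base sizes by direct computation. For case (b), a wreath product in its product action on $\Delta^r$ has base size at most $b_\Delta(H)+\lceil\log_{|\Delta|}r\rceil+O(1)$, where $H$ is the base group acting on $\Delta$; since $n=|\Delta|^r$, this quantity is dominated by $\sqrt{n}$ for large $n$ and is bounded by $25$ in the remaining small cases.

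The principal obstacle is the range of small degrees $n<2401$ falling under case (c), where the bound $2\log n+26$ derived above may exceed both $\sqrt{n}$ and $25$; indeed Theorem~\ref{mainresult} alone can never produce an absolute bound smaller than $26$, because the additive constant $24$ is built into it. To close this gap one has to revisit the proof of Theorem~\ref{mainresult} itself (hence the phrase ``and its proof'' in the corollary), using the hypothesis $G\not\supseteq\mathrm{Alt}(n)$ to extract sharper estimates in each O'Nan--Scott class. For almost simple groups in a non-standard action the Cameron--Kantor/Burness--Liebeck--Shalev bound gives $b(G)\le 7$; for almost simple standard actions, and for affine, diagonal, twisted wreath and product-type groups, one has analogous refined bounds in the literature which comfortably beat $25$ in the small-degree range. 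Assembling these refinements in parallel with the case analysis of Theorem~\ref{mainresult} yields the uniform bound $\max(\sqrt{n},25)$; the technical work is confined to checking that in each O'Nan--Scott class the sharper estimate is compatible with the required threshold.
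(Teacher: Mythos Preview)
Your overall strategy---Mar\'oti's trichotomy combined with Theorem~\ref{mainresult}, plus case-by-case refinements for small degrees---is exactly the paper's approach. The organization differs only cosmetically: the paper first disposes of the standard actions of $\mathrm{Sym}(m)$ and $\mathrm{Alt}(m)$ and of product-type groups, and only then invokes Mar\'oti's bound $|G|\le n^{1+\log n}$ for everything that remains.

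There is, however, a genuine gap in your handling of Mar\'oti's case~(b). The wreath-product estimate you quote reduces the problem (when $r=1$, or more generally for the base factor) to bounding $b_\Delta(H)$ where $H=\mathrm{Sym}(m)$ acts on $k$-subsets and $|\Delta|=\binom{m}{k}$. Your assertion that ``this quantity is dominated by $\sqrt{n}$ for large $n$'' is precisely the statement $b(m,k)\le\sqrt{\binom{m}{k}}$, and this is \emph{not} immediate: it is the most delicate part of the whole proof. The paper devotes several paragraphs to it, splitting according to whether $k^2\le m$ or $k^2>m$ and, in each regime, invoking the explicit formulae of Theorem~\ref{precise} (Halasi's results) together with elementary but careful inequalities in $m$ and $k$. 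For instance, when $k^2\le m$ one uses $b(m,k)<\frac{2m}{k+1}+1$ and compares this to $\sqrt{\binom{m}{k}}$ separately for $k=2$ and $k\ge 3$; when $k^2>m$ one uses $b(m,k)\le\lceil\log_{\lceil t\rceil}m\rceil(\lceil t\rceil-1)$ and again splits on the size of $k$. None of this is deep, but it is not a one-liner and cannot be absorbed into an unspecified $O(1)$.

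Your treatment of the small-degree residue under case~(c) is also only a sketch, but there you have correctly identified what needs to be done (run through the O'Nan--Scott classes with the sharper per-class bounds: $b\le 7$ for non-standard almost simple, $b\le d+14$ for classical subspace actions with $d\le 11$ when $n<1600$, $b\le d+1$ for affine, Fawcett's bound for diagonal, and $n>2500$ automatically for the remaining product and twisted-wreath types). That part would go through once written out.
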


Theorem \ref{mainresult} is proved for almost simple groups in the next two sections (see Theorems \ref{generalalternating}, \ref{generalclassical} and \ref{almostsimple}): alternating and symmetric groups are handled in \S2, and classical groups in \S3. The remaining non-affine primitive groups are covered in \S4 (see Theorem \ref{nonaffine}), and affine groups in \S5 (Theorem \ref{generalaffine}). Proposition \ref{acc} follows from Proposition \ref{2k/(k+1)} and Proposition \ref{sp}. Finally, Corollary \ref{maincorollary} is proved in Section \ref{SecCor}.

\section{Alternating and symmetric groups}

In this section we consider the minimal base sizes of alternating and symmetric groups in primitive actions. Here is the main result.

\begin{thm}
\label{generalalternating}
Let $G$ be a primitive permutation group of degree $n$ with socle isomorphic to $\mathrm{Alt}(m)$ for some integer $m \geq 5$. Then   
$$b(G) \leq 2  \frac{\log |G|}{\log n} + 16.$$
\end{thm}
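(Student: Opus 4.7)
The plan is to split the argument by the type of primitive action of the almost simple group $G$ with socle $\mathrm{Alt}(m)$. By the description of maximal subgroups of $\mathrm{Sym}(m)$ arising from the O'Nan--Scott theorem, such a primitive action falls into one of three families: (a) the action on the set $\Omega_k$ of $k$-element subsets of $\{1,\dots,m\}$ with $1 \le k < m/2$; (b) the action on the set of partitions of $\{1,\dots,m\}$ into $b$ blocks of equal size $a$, where $m = ab$ and $a,b \ge 2$; or (c) a primitive-type action, in which the point stabilizer $G_\alpha$ is itself a primitive proper subgroup of $\mathrm{Sym}(m)$ not containing $\mathrm{Alt}(m)$.

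Cases (b) and (c) are the easier ones. In case (c), the classical bound of Praeger--Saxl (and refinements by Mar\'oti) on the order of a primitive proper subgroup of $\mathrm{Sym}(m)$ gives $|G_\alpha| \le 4^m$, so $\log n \ge \log(m!) - 2m$, which is of order $m\log m$. Combined with the resolution of the Cameron--Kantor conjecture by Burness, Liebeck and Shalev (which bounds $b(G)$ by a small absolute constant in such actions), the inequality $b(G) \le 2\log|G|/\log n + 16$ follows immediately with a generous margin for all but finitely many $m$, and the small cases can be checked directly. Case (b) admits an analogous treatment: either $\log n$ is so large that $\log|G|/\log n$ alone majorizes a known bound on the base size, or the problem reduces inductively via the natural embedding of the point stabilizer into $\mathrm{Sym}(a) \wr \mathrm{Sym}(b)$ and a Stirling-type estimate on $n = m!/(a!^b b!)$.

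The main work lies in case (a). Here the strategy is to invoke the explicit bounds on $b(\mathrm{Sym}(m),\Omega_k)$ due to the first author and to earlier authors for small $k$: roughly of the shape $b(G) \le \max\bigl\{\lceil \log m / \log(m/k)\rceil + O(1),\ \lceil (2m-2)/(k+1)\rceil\bigr\}$, and to compare with $2\log(m!)/\log\binom{m}{k}$, which is approximately $2m\log m / (k\log(m/k))$ for $k \le m/2$. A case split on the size of $k$---very small ($k \le 2$), intermediate, and $k$ close to $m/2$---then allows one to verify the required inequality in each subrange, with the additive constant $16$ absorbing lower-order terms and handling small $m$.

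The principal technical obstacle is in case (a) at intermediate values of $k$, for instance when $k$ is of order $\sqrt{m}$ or more generally when $k/m$ is bounded away from both $0$ and $1/2$. In that range neither $b(G)$ nor $\log|G|/\log n$ has a clean dominant term, and to obtain the multiplicative constant $2$ rather than a slightly larger one, both the binomial estimate for $\binom{m}{k}$ and the precise base-size formula for $\Omega_k$ must be tracked carefully; this is where the sharpness of the final constant is really tested.
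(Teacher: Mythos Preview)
Your proposal is correct and follows essentially the same three-way split as the paper: intransitive (subsets), imprimitive (partitions), and primitive point stabilizer. The subset case is indeed where the work lies, and your plan---invoke the explicit Halasi bounds for $b(m,k)$, compare against $2\log(m!)/\log\binom{m}{k}$ via Stirling-type estimates, and split according to whether $k^2\le m$ or $k^2>m$---is exactly what the paper does (Lemma~\ref{klarge} and Proposition~\ref{maink}).

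The only points where the paper is sharper than your sketch are cases (b) and (c). For partitions the paper does not argue inductively or by crude size comparison; it simply quotes the Benbenishty--Cohen--Niemeyer bound (Theorem~\ref{bcnprop}), which gives $f(a,b)\le \log_a b + 4$ (or $\le 6$ when $a\ge b$) and hence $b(G)\le \log|G|/\log n + 5$ directly. For the primitive-stabilizer case the paper bypasses the Praeger--Saxl order bound entirely and quotes the Burness--Guralnick--Saxl result that $b(G)\le 5$ outright. Your suggested routes for (b) and (c) would also work, but the paper's citations are cleaner and avoid any need to track constants or handle small $m$ separately.
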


In the proof of Theorem \ref{generalalternating}, we may assume 
that $19 \leq b(G) \leq \log|G|$. In particular $m \geq 7$ and $G = \mathrm{Alt}(m)$ or $\mathrm{Sym}(m)$.

Let $\O$ be a set of size $n$ permuted by $G$, let $\a \in \O$ and let $H = G_\a$, a maximal subgroup of $G$. There are three possiblities to consider, according to the action of $H$ on the underlying set $\{1,\ldots,m\}$:
\begin{itemize}
\item[(1)] $H$ is intransitive: here $H = (\mathrm{Sym}(k) \times \mathrm{Sym}(m-k))\cap G$ for some $k\le m/2$;
\item[(2)] $H$ is transitive and imprimitive: here $H = (\mathrm{Sym}(b) \wr \mathrm{Sym}(a))\cap G$, where $m=ab$;
\item[(3)] $H$ is primitive on $\{1,\ldots,m\}$.
\end{itemize}
In case (1), the action of $G$ on $\O$ is the action on $k$-element subsets of $\{1,\ldots,m\}$, and in case (2) the action is on partitions into $a$ parts of size $b$. These actions are considered in Sections \ref{Section2.1} and \ref{Section2.2}, and the proof of Theorem \ref{generalalternating} is completed in Section \ref{Section2.3}.

\subsection{Action on subsets}
\label{Section2.1}

Here we prove Theorem \ref{generalalternating} in the case when the action is on subsets (see Proposition \ref{maink}). 
Let $\mathrm{Sym}(m)$ act on the set $\Omega(m,k)$ of all $k$-element subsets of the set $\{ 1, \ldots , m \}$, where $k\le m/2$. Set $n = |\Omega(m,k)| = \binom{m}{k}$. Let $b(m,k)$ denote the minimal size of a base for 
$\mathrm{Sym}(m)$ acting on $\Omega(m,k)$. For convenience set $t = m/k$.

A detailed study of the function $b(m,k)$ was carried out in \cite{H12}. Here are the main results from that paper that we need.

\begin{thm} {\rm (\cite[Thm. 3.2, Cor. 4.3]{H12})}
\label{precise}
\begin{itemize}
\item[{\rm (i)}] We have $b(m,k)\leq \left\lceil\log_{\lceil t\rceil}(m)\right\rceil
\left(\lceil t\rceil-1\right).$

\item[{\rm (ii)}] If $k^{2} \leq m$, then 
$$b(m,k) = \Big\lceil \frac{2m-2}{k+1} \Big\rceil < \frac{2m}{k+1} + 1 = \frac{2k}{k+1} t + 1.$$ 
\end{itemize}
\end{thm}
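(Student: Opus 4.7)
The first step is a reformulation: a set $\{S_1,\dots,S_b\}$ of $k$-subsets is a base for $\mathrm{Sym}(m)$ acting on $\Omega(m,k)$ if and only if the $m$ signature vectors $\sigma(i):=(1_{S_l}(i))_{l=1}^{b}\in\{0,1\}^b$ are pairwise distinct (if two elements share a signature then the transposition swapping them fixes every $S_l$ setwise, while if all signatures are distinct then no non-trivial permutation preserves them all). So the problem becomes: find the smallest $b$ for which there exists an $m\times b$ matrix over $\{0,1\}$ with pairwise distinct rows and every column of weight exactly $k$.

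For part (i) I would give an explicit labelling construction. Set $r=\lceil t\rceil=\lceil m/k\rceil$ and $L=\lceil\log_r m\rceil$; since $m/r\le k$, one can choose pairwise distinct labels for the $m$ elements in $\{0,1,\dots,r-1\}^L$ so that at every position $p$ each colour $c$ occurs on at most $k$ elements. For each of the $L(r-1)$ pairs $(p,c)$ with $c\in\{0,\dots,r-2\}$, define $S_{p,c}$ to be the set of elements of colour $c$ at position $p$, padded up to size $k$ by elements of the reserve colour $r-1$ at position $p$. Any two distinct elements differ at some position $p$ with colours $a\ne b$; since at least one of $a,b$ lies in $\{0,\dots,r-2\}$, the corresponding $S_{p,c}$ separates them, giving $b(m,k)\le L(r-1)$.

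For part (ii), let $n_j$ denote the number of elements contained in exactly $j$ of the $S_l$'s, so that $\sum_j n_j=m$, $\sum_j j n_j=bk$ and $n_j\le\binom{b}{j}$. The minimum of $\sum_j j n_j$ under these constraints is attained with weights concentrated on $\{0,1,2\}$: when $m-1-b\le\binom{b}{2}$ the optimum is $n_0=1,\,n_1=b,\,n_2=m-1-b$ and equals $2(m-1)-b$; otherwise some $n_j$ with $j\ge 3$ must be positive and the minimum is strictly larger. Either way $bk\ge 2(m-1)-b$, giving $b\ge\lceil(2m-2)/(k+1)\rceil$. For the matching upper bound, set $b=\lceil(2m-2)/(k+1)\rceil$ and construct the matrix explicitly: one zero row, $b$ weight-$1$ rows forming an identity block, and $m-1-b$ further rows of weight $2$ chosen so that each column reaches total weight $k$. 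The latter amounts to realising a $(k-1)$-regular simple graph on $b$ vertices, whose existence follows from $b\ge 2(k-1)$ (a consequence of the hypothesis $k^2\le m$ combined with $b\ge 2(m-1)/(k+1)$) and the fact that $b(k-1)=2(m-1-b)$ is even. One or two weight-$3$ rows can replace a pair of weight-$2$ rows in order to absorb the ceiling when $(k+1)\nmid 2(m-1)$.

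The main technical obstacle is the upper bound of (ii): choosing the weight-$2$ rows so that they are simultaneously pairwise distinct and realise the prescribed column-degree profile, combined with the short case analysis on the residue of $2(m-1)$ modulo $k+1$ needed to accommodate the ceiling. Once the matrix reformulation is in hand, both the lower bound of (ii) and the construction in (i) reduce to elementary counting and colouring arguments.
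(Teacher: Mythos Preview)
The paper does not prove this theorem: it is quoted from \cite{H12} (Halasi's paper on the symmetric group acting on subsets) and used as a black box. So there is no ``paper's own proof'' to compare against; I can only assess your sketch on its merits.

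Your matrix/signature reformulation is exactly the standard one, and your lower bound in (ii) is clean and complete: from $n_0\le 1$, $n_1\le b$ one gets $\sum jn_j\ge (m-1)+(m-1-b)$, hence $b(k+1)\ge 2m-2$. The upper-bound construction in (ii) is also along the right lines, though the remark that ``one or two weight-$3$ rows'' absorbs the ceiling is too optimistic in edge cases (for instance $k=2$, $m=6$ gives $b=4$ and forces a single weight-$4$ row); this is easily repaired, but the case analysis on the residue deserves a little more care than you indicate.

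There is, however, a genuine gap in your argument for (i). You pad $S_{p,c}$ with elements of the reserve colour $r-1$, and then claim that if $x,y$ differ at position $p$ with colours $a\ne b$ and $a\ne r-1$, then $S_{p,a}$ separates them. But if $b=r-1$ and $y$ happens to lie in the padding of $S_{p,a}$, then $y\in S_{p,a}$ as well and the set does not separate $x$ from $y$. Concretely: take $m=7$, $k=3$, $r=3$, $L=2$ with labels $00,01,02,10,11,12,20$; then $S_{2,1}$ must be padded by a colour-$2$ element, say $02$, and one checks that $01$ and $02$ receive the \emph{same} signature across all four sets. So the collection you build is not a base. Halasi's actual construction handles the padding more delicately (essentially by controlling which signatures the reserve elements can acquire, or by a recursive partition that keeps track of the effective classes); your outline would need a comparable refinement before it goes through.
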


We shall need the following estimates for $\ln |\mathrm{Sym}(m)| / \ln |\Omega(m,k)|$. 

\begin{lem}
\label{binom}
We have $$\Big( \frac{t}{\ln(t)+1} \Big)   (\ln m - 1) < \frac{\ln |\mathrm{Sym}(m)|}{\ln |\Omega(m,k)|} < \Big( \frac{t}{\ln(t)} \Big) \ln m.$$
\end{lem}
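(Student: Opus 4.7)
The plan is to reduce the claim to two elementary inequalities: a Stirling-type estimate for $m!$ and a two-sided estimate for the binomial coefficient $\binom{m}{k}$. Since the ratio $\ln|\mathrm{Sym}(m)|/\ln|\Omega(m,k)|$ is invariant under the choice of logarithm base, I will work throughout with natural logarithms. All of the algebra is routine; the only care needed is to preserve strict inequalities.

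First I would establish
\[
m(\ln m - 1) \;<\; \ln(m!) \;<\; m \ln m.
\]
The upper bound is immediate from $m! < m^m$ for $m \ge 2$. For the lower bound one uses the expansion $e^m = \sum_{j \ge 0} m^j/j! > m^m/m!$, so $m! > (m/e)^m$ and hence $\ln(m!) > m \ln m - m$.

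Next I would prove the two-sided estimate
\[
k \ln t \;\le\; \ln \binom{m}{k} \;\le\; k(\ln t + 1),\qquad t = m/k.
\]
The lower bound comes from writing $\binom{m}{k} = \prod_{i=0}^{k-1} (m-i)/(k-i)$ and observing that each factor $(m-i)/(k-i)$ is at least $m/k = t$ (since $m \ge k$ is equivalent to $k(m-i) \ge m(k-i)$). The upper bound follows from $\binom{m}{k} \le m^k/k!$ combined with $k! \ge (k/e)^k$, giving $\binom{m}{k} \le (em/k)^k = (et)^k$.

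Combining these bounds gives the upper estimate
\[
\frac{\ln |\mathrm{Sym}(m)|}{\ln |\Omega(m,k)|} \;<\; \frac{m \ln m}{k \ln t} \;=\; \frac{t \ln m}{\ln t}
\]
and the lower estimate
\[
\frac{\ln |\mathrm{Sym}(m)|}{\ln |\Omega(m,k)|} \;>\; \frac{m(\ln m - 1)}{k(\ln t + 1)} \;=\; \Bigl(\frac{t}{\ln t + 1}\Bigr)(\ln m - 1),
\]
which is exactly the required pair of inequalities. There is no real obstacle here: the argument is a clean bookkeeping exercise with Stirling bounds, and the only thing to watch is that the inequalities $m! < m^m$, $m! > (m/e)^m$, and $\binom{m}{k} > t^k$ (for $k \ge 1$) are genuinely strict, which ensures the strict inequalities claimed in the lemma.
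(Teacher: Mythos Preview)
Your proof is correct and follows essentially the same route as the paper's: both use the Stirling-type bounds $(m/e)^m < m! < m^m$ together with the binomial estimates $t^k \le \binom{m}{k} \le (et)^k$, and then divide. The paper states these four inequalities without the justifications you supply, but the argument is otherwise identical.
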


\begin{proof}
By the inequalities 
$${(m/k)}^{k} < \binom{m}{k} < {(me/k)}^{k} \hbox{ and } {(m/e)}^{m} < m! < m^{m},$$
we have 
$$\frac{m (\ln m - 1)}{k (\ln (m/k) + 1)} < \frac{\ln |\mathrm{Sym}(m)|}{\ln |\Omega(m,k)|} < \frac{m \ln m}{k \ln (m/k)} = \frac{m/k}{\ln(m/k)} \ln m.$$ 
From this the lemma follows. 
\end{proof}

The next result establishes the conclusion of Theorem \ref{generalalternating}  under the assumption that $k^{2} \leq m$.  

\begin{lem}
\label{klarge}
Assume that $k^{2} \leq m$. Then 
$$b(m,k) < 2 \frac{\ln |\mathrm{Sym}(m)|}{\ln |\Omega(m,k)|} + 4.$$
\end{lem}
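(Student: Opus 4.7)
The plan is to combine the two explicit estimates already established in the preceding pages: Theorem \ref{precise}(ii) gives $b(m,k) < \frac{2m}{k+1} + 1$, while Lemma \ref{binom} gives $L := \frac{\ln|\mathrm{Sym}(m)|}{\ln|\Omega(m,k)|} > \frac{t(\ln m - 1)}{\ln t + 1}$, where $t = m/k$. It therefore suffices to prove the purely arithmetic inequality
\[
\frac{2m}{k+1} + 1 \;\le\; \frac{2t(\ln m - 1)}{\ln t + 1} + 4.
\]
Clearing denominators and substituting $\ln m = \ln k + \ln t$, this rearranges to
\[
\ln m + k\ln k - 2k - 1 \;\ge\; -\tfrac{3k(k+1)(\ln t + 1)}{2m},
\]
a claim in the two parameters $k \ge 1$ and $m \ge k^{2}$.

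For $k \ge 5$, I would discard the (negative) right-hand side entirely, and use the hypothesis $m \ge k^{2}$ to replace $\ln m$ by $2\ln k$; the left-hand side is then at least $(k+2)\ln k - 2k - 1$, which an elementary calculus check (monotonicity combined with a direct verification at $k=5$) shows to be non-negative. In this range one in fact obtains the stronger conclusion $b(m,k) < 2L + 1$.

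For the remaining values $k \in \{1,2,3,4\}$, $k$ is bounded and the statement reduces to a one-variable claim in $m \ge k^{2}$. The leading-order behaviour $b(m,k) \sim \frac{2m}{k+1}$ versus $2L \sim \frac{2m}{k}$ opens a linear gap of order $\frac{2m}{k(k+1)}$, so the desired inequality holds for all sufficiently large $m$; the finitely many remaining values are verified by direct substitution into the exact formulas $b(m,k) = \lceil (2m-2)/(k+1)\rceil$ and $L = \ln(m!)/\ln \binom{m}{k}$. The most delicate subcase is $k=1$, where $b(m,1) = m-1$ and the multiplicative constant $2$ is genuinely required: here Stirling's bound $\ln m! > m(\ln m - 1)$ reduces the claim to verifying $2/\ln m < 1 + 5/m$ for $m \ge 5$, which is immediate once $\ln m \ge 2$ and otherwise done by hand.

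The main obstacle is not the asymptotic regime, which falls out almost immediately from the two cited bounds, but rather the careful bookkeeping required at the small-$k$, small-$m$ boundary, where one must check that the additive constant $+4$ (as opposed to a smaller one) is indeed sufficient to absorb the losses in applying Theorem \ref{precise}(ii) and Lemma \ref{binom}. Since this boundary consists of only finitely many pairs $(k,m)$, the verification is routine in principle and presents no genuine difficulty.
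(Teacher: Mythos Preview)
Your proposal is correct and follows essentially the same route as the paper: combine Theorem~\ref{precise}(ii) with Lemma~\ref{binom}, split into large $k$ (handled analytically) and small $k$ (a finite check for small $m$ plus an asymptotic argument for large $m$). The paper splits at $k=8$ rather than $k=5$, works with the ratio $b(m,k)/L$ rather than your rearranged difference, and invokes a GAP computation for $5\le m\le 148$ in place of your ``direct substitution'', but these are cosmetic differences.
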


\begin{proof}
Assume first that $k \geq 8 > e^{2}$. By Theorem \ref{precise}(ii) and Lemma \ref{binom}, we have 
$$\frac{b(m,k) \ln n}{\ln |\mathrm{Sym}(m)|} < \Big( \frac{2t+1}{t} \Big) \Big( \frac{\ln(t)+1}{\ln(m)-1} \Big).$$ 
Since $k \geq 8 > e^{2}$, it follows that $\frac{\ln(t)+1}{\ln(m)-1} < 1$. By this and Lemma \ref{binom},  
$$b(m,k) < 2 \frac{\ln |\mathrm{Sym}(m)|}{\ln n} + \Big( \frac{\ln (m)}{\ln(m)-1} \Big) \Big( \frac{\ln(t) +1}{\ln(t)} \Big).$$ It is easy to see that the second term is less than $4$, giving the conclusion in this case ($k \geq 8 >e^2$). 

Hence we may assume that $k \leq 7$. A GAP \cite{GAP} computation shows that the bound in the conclusion of the lemma holds for $5 \leq m \leq 148 < e^{5}$. Thus assume also that $m \geq 149 > e^{5}$. 


If $2 \leq k \leq 7$ then Theorem \ref{precise} gives $b(m,k) < \frac{2k}{k+1}t + 1$, and so by Lemma \ref{binom},
$$\frac{b(m,k) \ln n}{\ln |\mathrm{Sym}(m)|} < \Big(\frac{2k}{k+1} + \frac{k}{m}\Big) \Big(\frac{\ln m - \ln k + 1}{\ln m -1}\Big).$$
This is less than $2$ for $m \geq 149 > e^{5}$.
\end{proof}

Here is the main result of this subsection.

\begin{prop} \label{maink}
We have $$b(m,k) \leq 2 \frac{\ln |\mathrm{Sym}(m)|}{\ln |\Omega(m,k)|} + 16.$$
\end{prop}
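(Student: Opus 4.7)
The plan is to split according to whether $k^{2} \leq m$ or $k^{2} > m$. In the first case Lemma~\ref{klarge} already yields the stronger conclusion $b(m,k) < 2\ln|\mathrm{Sym}(m)|/\ln|\Omega(m,k)| + 4$, so there is nothing to do. Hence we may assume $k^{2} > m$, equivalently $t := m/k < \sqrt{m}$; note that $k \leq m/2$ forces $t \geq 2$.

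In this regime the sharp formula of Theorem~\ref{precise}(ii) is unavailable, so we invoke the general estimate of Theorem~\ref{precise}(i). Setting $s = \lceil t \rceil \geq 2$, this yields
$$b(m,k) \;\leq\; \lceil \log_{s}(m) \rceil (s-1) \;\leq\; (s-1)\Big(\frac{\ln m}{\ln s} + 1\Big).$$
On the other hand Lemma~\ref{binom} supplies the lower bound $2\ln|\mathrm{Sym}(m)|/\ln|\Omega(m,k)| > 2t(\ln m - 1)/(\ln t + 1)$, so it suffices to verify the elementary inequality
$$(s-1)\frac{\ln m}{\ln s} + (s-1) \;\leq\; \frac{2t(\ln m - 1)}{\ln t + 1} + 16.$$
A short calculus check shows that for every $t \geq 2$ (with $s = \lceil t \rceil$) the coefficient $2t/(\ln t + 1)$ of $\ln m$ on the right strictly exceeds the coefficient $(s-1)/\ln s$ on the left, the gap being bounded below by a positive function of~$t$. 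Hence the excess in the $\ln m$--terms absorbs the remaining additive contributions once $m$ is sufficiently large, and the finitely many small values of $m$ can be disposed of by a GAP computation as in the proof of Lemma~\ref{klarge}.

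The main technical obstacle is to balance the two extreme regimes of $t$. When $t$ is of order $\sqrt{m}$ (the upper limit) the left-hand additive term $s - 1 \leq \sqrt{m}$ is substantial, but there $\ln s \approx \tfrac{1}{2}\ln m$, so the coefficient gap is of order $\sqrt{m}/\ln m$, whose product with $\ln m$ beats $\sqrt{m}$ with a clear margin. When $t$ sits near the lower end, say $t \in [2,4]$, the coefficient gap is only a bounded positive constant, but then $s - 1$ is a small integer and the absolute constant $16$ comfortably absorbs everything. The delicate intermediate region is where one should be most careful, but direct estimation along the lines above suffices.
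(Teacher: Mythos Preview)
Your plan matches the paper's: reduce to $k^2>m$ via Lemma~\ref{klarge}, then combine Theorem~\ref{precise}(i) with the lower bound of Lemma~\ref{binom}. What you have written, however, is a heuristic outline rather than a proof, and the heuristic has a soft spot. When you say ``once $m$ is sufficiently large, and the finitely many small values of $m$ can be disposed of by a GAP computation'', you are implicitly claiming a \emph{uniform} threshold $M_0$ valid for every admissible $t\in[2,\sqrt{m})$. But your preceding sentence only produces a threshold depending on $t$ through the (merely positive, not uniformly bounded below) coefficient gap, and since $t$ itself ranges up to $\sqrt{m}$ this is circular until you actually quantify. Similarly, in the ``$t\sim\sqrt{m}$'' regime both sides of your displayed inequality are of exact order $\sqrt{m}$, so ``clear margin'' is not justified by orders of magnitude alone; you must track the constants, and you have also silently dropped the term $2t/(\ln t+1)$ that appears on the right after expanding $2t(\ln m-1)/(\ln t+1)$.

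The paper removes precisely this difficulty by introducing the integer $r\ge 2$ with $t^r<m\le t^{r+1}$, deducing $b(m,k)\le (r+1)t$ from Theorem~\ref{precise}(i), and then obtaining
\[
\frac{b(m,k)\ln n}{\ln m!}<\frac{(r+1)(\ln t+1)}{r\ln t-1}.
\]
This substitution replaces the continuous parameter $\ln m$ by the discrete proxy $r$, so the target inequality becomes a genuine two-variable inequality in $(r,t)$ with no hidden dependence on $m$. A short explicit case split ($r=2$, $r=3$, $r\ge 4$, each with an explicit range of $t$) then finishes the argument, and the only residual computer check is confined to the honestly finite region $r=2$, $t<e^{5}$, whence $m\le t^{3}<4500$. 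If you want to salvage your version, the cleanest fix is exactly this $r$-parametrisation.
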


\begin{proof}
By Lemma \ref{klarge}, we may assume that $k^{2} > m$, which is equivalent to saying that $t^{2} < m$. 

Define $r$ to be the integer $r \geq 2$ with $t^{r} < m \leq t^{r+1}$. Then by Theorem \ref{precise}(i), we have 
$b(m,k) \leq (r+1)t$. By Lemma \ref{binom}, this gives
$$\frac{b(m,k) \ln n}{\ln m!} < \frac{(r+1)(\ln t + 1)}{r \ln t - 1}.$$ 
A GAP \cite{GAP} computation shows that the right hand side is less than $2$ provided that $r = 2$ and $t \geq 149 > e^{5}$, or $r = 3$ and $t \geq 20 > e^{3}$, or $r \geq 4$ and $t \geq 11$. 

If $r = 3$ and $t \leq 20 < e^3$, then $4t - 2 (\frac{3 \ln t - 1}{\ln t + 1} ) t \leq 16$, which gives the conclusion (using Lemma \ref{binom}). Similarly, if $r \geq 4$ and $t < 11$, then $(r+1)t - 2 (\frac{r \ln t - 1}{\ln t + 1} ) t \leq 11$, giving the conclusion.

This leaves the case where $r=2$ and $t \leq 148 < e^5$. We first distinguish eleven different cases according to some possible ranges of values of $\ln t$. If $\ln t$ falls in any of the intervals $[\epsilon,\epsilon + 0.2]$ where $\epsilon = 2.8 + 0.2 \ell$ and $\ell$ is a non-negative integer at most $10$, then $3t - 2 (\frac{2 \ln t - 1}{\ln t + 1} ) t \leq 16$. Thus we may assume that $t < e^{2.8}$. But then $m \leq t^{3} < e^{8.4} < 4500$. By a GAP \cite{GAP} calculation, we see that if $5 \leq m \leq 4500$, then $3t - 2 (\frac{2 \ln t - 1}{\ln t + 1} ) t \leq 11$. This completes the proof. 
\end{proof}

The final result of this subsection gives the first part of Proposition \ref{acc}.

\begin{prop}
\label{2k/(k+1)}
Fix a positive integer $k$. Then as $m \to \infty$,
$$\frac{b(m,k) \log|\Omega(m,k)|}{\log|\mathrm{Sym}(m)|} \to 2k/(k+1).$$ 
\end{prop}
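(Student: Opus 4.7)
The plan is straightforward: combine the exact formula for $b(m,k)$ given in Theorem \ref{precise}(ii) with standard asymptotic estimates for $\binom{m}{k}$ and $m!$, then take the limit. Since $k$ is fixed and $m\to\infty$, eventually $k^{2}\leq m$, so Theorem \ref{precise}(ii) applies and gives
$$
b(m,k)=\left\lceil\frac{2m-2}{k+1}\right\rceil\sim \frac{2m}{k+1}.
$$

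Next I would estimate the denominators. For fixed $k$ and $m\to\infty$, the binomial coefficient satisfies $\binom{m}{k}=\frac{m(m-1)\cdots(m-k+1)}{k!}\sim \frac{m^{k}}{k!}$, hence
$$
\log|\Omega(m,k)| \;=\; \log\binom{m}{k} \;\sim\; k\log m.
$$
Similarly, Stirling's approximation gives $\log|\mathrm{Sym}(m)|=\log m!\sim m\log m$. Assembling these three asymptotics,
$$
\frac{b(m,k)\log|\Omega(m,k)|}{\log|\mathrm{Sym}(m)|}
\;\sim\;\frac{\dfrac{2m}{k+1}\cdot k\log m}{m\log m}
\;=\;\frac{2k}{k+1},
$$
which is the claimed limit.

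There is essentially no obstacle here beyond verifying that the lower-order terms in each estimate contribute $o(1)$ to the ratio, which follows from the fact that both $\log\binom{m}{k}=k\log m - \log k! + O(1/m)$ and $\log m! = m\log m - m\log e + O(\log m)$ have error terms negligible compared to the leading terms once multiplied/divided appropriately. One could make this fully rigorous by sandwiching $b(m,k)$ between $\frac{2m-2}{k+1}$ and $\frac{2m-2}{k+1}+1$, and using the elementary bounds $(m/k)^{k}\leq\binom{m}{k}\leq (em/k)^{k}$ together with $(m/e)^{m}\leq m!\leq m^{m}$ (which are already invoked in Lemma \ref{binom}) to control both the numerator and denominator explicitly; the resulting expressions for the upper and lower bounds on the ratio both converge to $2k/(k+1)$.
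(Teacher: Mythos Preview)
Your proof is correct and takes essentially the same approach as the paper: both use Theorem~\ref{precise}(ii) to obtain $b(m,k)/m \to 2/(k+1)$, and both establish that $\log|\Omega(m,k)|/\log|\mathrm{Sym}(m)| \sim k/m$ (the paper cites Lemma~\ref{binom} for this, while you compute it directly and also note that Lemma~\ref{binom} would suffice). The combination yields the limit $2k/(k+1)$.
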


\begin{proof}
Assume that $m \geq k^{2}$. Then, by Theorem \ref{precise}(ii), $b(m,k)=\left\lceil\frac{2m-2}{k+1}\right\rceil$, and hence $b(m,k)/m \to \frac{2}{k+1}$ as $m\to \infty$. Also 
$(m \ln|\Omega(m,k)| / \ln|\mathrm{Sym}(m)|) \to k$ by Lemma \ref{binom}. The result follows.  
\end{proof}

\subsection{Action on partitions}
\label{Section2.2}

Now consider the minimal base size $f(a,b)$ of the group $\mathrm{Sym}(m)$ acting on the set $\Omega$ of all partitions of $\{ 1, \ldots, m \}$ into $a$ parts each of size $b$, where $m = ab$ and $a$, $b \geq 2$. In this case $n = |\Omega| = m!/({b!}^{a}a!)$. Bases for this action were studied in \cite{BCN}, where the following was proved.

\begin{thm}\label{bcnprop} {\rm (\cite{BCN})} Suppose $b\ge 3$. Then one of the following holds:
\begin{itemize}
\item[{\rm (i)}] $a\ge b$ and $f(a,b)\le 6$;
\item[{\rm (ii)}] $a<b$ and $f(a,b) \leq \log_{a}(b) + 4$.
\end{itemize}
\end{thm}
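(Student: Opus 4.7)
My plan exploits a coordinate description of ordered partitions. An ordered partition of $[m]$ into $a$ blocks of size $b$ is a surjection $c : [m] \to [a]$ whose fibres all have size $b$, and the underlying (unordered) partition is the orbit of $c$ under the $\mathrm{Sym}(a)$-action on the target. Given $f$ such ordered representatives $c_1, \ldots, c_f$, assemble them into $C = (c_1, \ldots, c_f) : [m] \to [a]^f$. A permutation $g \in \mathrm{Sym}(m)$ lies in the common set-stabilizer of $\pi_1, \ldots, \pi_f$ iff for each $i$ there is $\sigma_i \in \mathrm{Sym}(a)$ with $c_i \circ g = \sigma_i \circ c_i$. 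Consequently the pointwise stabilizer is trivial exactly when (a) the map $C$ is injective, and (b) no non-trivial tuple $(\sigma_1, \ldots, \sigma_f) \in \mathrm{Sym}(a)^f$ preserves the image $C([m]) \subseteq [a]^f$ set-wise.

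Condition (a) alone forces $a^f \geq m$, hence $f \geq 1 + \lceil \log_a b \rceil$, which accounts for the leading term in (ii). For part (ii), setting $r = \lceil \log_a b \rceil$, I would label the $m = ab$ points by distinct vectors in $[a]^{r+1}$ arranged so that every coordinate value is used exactly $b$ times in each coordinate (a balanced labelling exists since $a^{r+1} \geq ab$ with $a^r \geq b$), yielding $r+1$ partitions realizing (a). This leaves a slack of three partitions within the budget $r + 4$ in which to secure (b). For part (i), where $a \geq b$, one can achieve (a) with as few as two partitions, for instance a ``row'' and a ``column'' partition of a suitable $a \times b$ grid, and the remaining four slots are available for (b), yielding the uniform bound $6$.

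The bulk of the work lies in (b): showing that a constant number of further auxiliary partitions can be chosen to break every non-trivial symmetry of the image $C([m])$. The strategy is to tailor the extra partitions so that their fibre patterns are asymmetric along each coordinate separately. Concretely, one picks auxiliary partitions whose defining labellings agree with the already-constructed coordinates on a large subset but differ on a small rigid subset, thereby forcing any $\sigma_i$ that preserves the image to fix that small subset pointwise and hence be trivial. The main obstacle will be the combinatorial case analysis behind this symmetry-breaking step: one must split on the support sizes of the $\sigma_i$ and verify each case, and it is precisely this analysis that pins down the additive constants $4$ in (ii) and $6$ in (i).
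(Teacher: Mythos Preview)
The paper does not prove this theorem; it is simply quoted from \cite{BCN}. There is therefore no proof in the present paper to compare your proposal against.

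That said, your framework is the standard one and is essentially the approach of \cite{BCN}: encode the points of $[m]$ by tuples in $[a]^f$ whose coordinates are the partitions, and observe that the common stabiliser is trivial precisely when the encoding is injective and its image admits no nontrivial coordinatewise $\mathrm{Sym}(a)^f$-symmetry. Two technical points deserve attention. First, in case~(i) your ``row and column'' construction on an $a\times b$ grid does not work when $a>b$: the column partition then has $b$ parts of size $a$, not $a$ parts of size $b$. A standard repair is to replace columns by diagonals $\{(i+j \bmod a,\,j):j\in[b]\}$, which give $a$ blocks of size $b$ and, together with rows, separate all points. Second, you correctly identify that the substantive work lies in step~(b), the symmetry-breaking, but you only sketch it. In \cite{BCN} this step is carried out via explicit auxiliary partitions and a careful case analysis, and it is exactly there that the additive constants $4$ and $6$ are determined. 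Your outline of how to proceed is on the right track, but as written it is a plan rather than a proof; likewise, the existence of a balanced injective labelling $[ab]\hookrightarrow[a]^{r+1}$ in~(ii) is true but needs an argument (for instance, take $c_1(i,j)=i$ and $c_{k+1}(i,j)=\phi(j)_k+i\pmod a$ for any injection $\phi:[b]\to[a]^r$), not merely the cardinality inequality you cite.
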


We shall need the following bound.

\begin{lem}
Let $a,b$ be integers with $2\le a<b$. Then 
$$\frac{\ln b}{\ln a} - 1 < \frac{\ln((ab)!)}{\ln \Big(\frac{(ab)!}{{(b!)}^{a}a!}\Big)}.$$
\end{lem}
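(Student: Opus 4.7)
Set $m = ab$ and $n = m!/((b!)^a a!)$. Since $\ln a$ and $\ln n$ are positive, the stated inequality is equivalent to
\[
(\ln b - \ln a)\,\ln n \;<\; (\ln a)\,\ln m!.
\]
My plan is to establish the clean upper bound $\ln n < m\ln a$ and feed it into a Stirling-type lower bound on $\ln m!$.

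For the upper bound I would argue as follows. The quantity $n\cdot a! = m!/(b!)^a$ counts ordered partitions of $\{1,\ldots,m\}$ into an $a$-tuple of labelled blocks of size $b$, which is the same as the number of functions $\{1,\ldots,m\}\to\{1,\ldots,a\}$ whose fibres all have size exactly $b$. This is at most the total number of functions $\{1,\ldots,m\}\to\{1,\ldots,a\}$, namely $a^m$. Hence $n \le a^m/a!$ and so
\[
\ln n \;\le\; m\ln a - \ln a! \;<\; m\ln a,
\]
the strict inequality using $\ln a! \ge \ln 2 > 0$ since $a\ge 2$.

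It then suffices to verify $m(\ln b - \ln a) \le \ln m!$. From the elementary estimate $m! > (m/e)^m$ one obtains $\ln m! > m\ln m - m$; substituting $\ln m = \ln a + \ln b$ turns the desired bound into $m(\ln a + \ln b - 1) > m(\ln b - \ln a)$, equivalently $2\ln a > 1$. This holds since $a\ge 2$ gives $\ln a \ge \ln 2 > 1/2$, which closes the argument.

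The only delicate step is the upper bound $\ln n < m\ln a$. The naive alternative $n \le m!/(b!)^a$ (obtained by discarding the factor $a!$) yields only $\ln n \lesssim m\ln a + m$ after applying Stirling to $b!$, and one checks that this is just too loose to finish the above comparison (it already fails at $a=2$, $b=3$). Absorbing $a!$ via the surjection count saves the additive $m$ and provides exactly the margin $2\ln a - 1 > 0$ that the Stirling step needs; this is where I expect the main technical content of the proof to lie.
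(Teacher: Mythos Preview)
Your argument is correct. The cross-multiplication is valid since $\ln a>0$ and $\ln n>0$; your bound $n\,a!=\binom{m}{b,\ldots,b}\le a^m$ is just the observation that this multinomial coefficient is one term in the expansion of $(1+\cdots+1)^m=a^m$, giving the strict inequality $\ln n<m\ln a$; and then $m!>(m/e)^m$ together with $2\ln a>1$ for $a\ge 2$ finishes the comparison $m(\ln b-\ln a)<\ln m!$. One strict inequality in the chain suffices, so the conclusion follows.

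Your route is genuinely different from the paper's. The paper bounds $g(a,b)=\ln((ab)!)/\ln n$ from below directly, applying two-sided Stirling bounds $\sqrt{2\pi}\,\ell^{1/2}(\ell/e)^\ell<\ell!<e\,\ell^{1/2}(\ell/e)^\ell$ to all three factorials $(ab)!$, $b!$, $a!$ appearing in the ratio, and then performs a chain of somewhat delicate algebraic simplifications of the resulting expression until it dominates $(\ln b)/(\ln a)-1$. You instead replace the Stirling estimate on the denominator by the clean combinatorial bound $n\le a^m/a!$, which eliminates $b!$ and $a!$ from the problem in one stroke and leaves only the crude one-sided bound $m!>(m/e)^m$ to handle. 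This is shorter and conceptually cleaner; the paper's approach, by contrast, would more readily yield sharper constants or a two-sided estimate on $g(a,b)$ if one needed them, since it tracks all the Stirling error terms explicitly.
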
 

\begin{proof}
Write $g(a,b) = \ln((ab)!)\,/\,\ln \Big(\frac{(ab)!}{{(b!)}^{a}a!}\Big)$. Then 
using the bounds 
$$\sqrt{2 \pi} \cdot {\ell}^{1/2} {\Big(\frac{\ell}{e}\Big)}^{\ell} < \ell ! < e \cdot {\ell}^{1/2} {\Big(\frac{\ell}{e}\Big)}^{\ell}$$ 
which hold for all positive integers $\ell$, we have 
\[
\begin{array}{rl}
g(a,b) >& \frac{ab (\ln (ab)-1)}{\ln((ab)!) - a \ln(b!) - \ln(a!)} \\
>& \frac{ab(\ln(ab)-1)}{ \ln(e/\sqrt{2\pi}) + ab(\ln a) + \frac{1}{2}\ln(ab) - a \ln(\sqrt{2\pi}) - \frac{1}{2}a \ln b - \frac{1}{2}\ln a - a \ln a + a}\\
 =& \frac{\ln(ab)-1}{\ln a + \frac{1}{b}(1 - \ln a - \frac{1}{2}\ln(2 \pi)) + \ln(e/\sqrt{2\pi})/(ab) + \frac{1}{2b} ( (\ln (b))/a - \ln b ) }\\
 \geq & \frac{\ln(ab)-1}{\ln a + \frac{1}{b}(1 - \ln a - \frac{1}{2}\ln(2 \pi) - \frac{\ln b}{4}) + \ln(e/\sqrt{2\pi})/(ab) } \geq \\
\geq & \frac{\ln(ab)-1}{\ln a + \frac{1}{b}(1 - \ln 2 - \frac{1}{2}\ln(2 \pi) - \frac{\ln 3}{4} + \ln(e/\sqrt{2\pi})/2) } \\
 > & \frac{\ln(ab)-1}{\ln a - (0.8)/b} > \frac{\ln b}{\ln a} - 1.
\end{array}
\]
\end{proof}

Here is the main result of this subsection.

\begin{prop}\label{partition}
With the above notation, we have $f(a,b) \leq \frac{\ln |\mathrm{Sym}(m)|}{\ln n} + 5$.
\end{prop}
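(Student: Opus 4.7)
\emph{Plan.} The plan is to apply Theorem \ref{bcnprop} in the two cases it covers and to handle the excluded case $b=2$ separately, using the preceding Lemma as the only non-trivial arithmetic input.

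Suppose first that $b \ge 3$. If $a \ge b$, Theorem \ref{bcnprop}(i) gives $f(a,b) \le 6$. The point stabilizer in the action on partitions has order $(b!)^a a! \ge (b!)^2 \ge 36$, so $n \le |\mathrm{Sym}(m)|/36$ and hence $\ln|\mathrm{Sym}(m)|/\ln n > 1$ with room to spare; the desired inequality $f(a,b) \le \ln|\mathrm{Sym}(m)|/\ln n + 5$ follows immediately. If instead $a < b$, Theorem \ref{bcnprop}(ii) gives $f(a,b) \le \log_a b + 4 = \frac{\ln b}{\ln a} + 4$, and the preceding Lemma yields $\frac{\ln b}{\ln a} - 1 < \frac{\ln|\mathrm{Sym}(m)|}{\ln n}$; adding $5$ to both sides gives the conclusion at once. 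Both subcases are therefore essentially a matter of plugging Theorem \ref{bcnprop} into the Lemma.

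It remains to treat $b=2$, where the action is on perfect matchings of $\{1,\ldots,2a\}$ and the point stabilizer is $\mathrm{Sym}(2) \wr \mathrm{Sym}(a)$ of order $2^a a!$. A short Stirling estimate shows $\ln((2a)!)/\ln n \to 2$ as $a \to \infty$, so it suffices to establish a uniform bound $f(a,2) \le C$ for a small absolute constant (any $C \le 7$ will do asymptotically). The plan here is to encode each matching as a fixed-point-free involution and to build a bounded family $M_1,\ldots,M_C$ of matchings whose corresponding involutions generate a subgroup of $\mathrm{Sym}(2a)$ with trivial centralizer: choosing $M_1,M_2$ so that $M_1 \cup M_2$ is a single $2a$-cycle already cuts the joint stabilizer down to a cyclic group of order $a$, and a further small number of carefully chosen matchings (designed to break any nontrivial rotation of this cycle) makes the joint stabilizer trivial. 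The finitely many small values of $a$ where such a uniform construction is cumbersome can be verified directly by \cite{GAP}.

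The main obstacle is the $b=2$ case: Theorem \ref{bcnprop} does not apply and one must supply an explicit construction yielding an absolute constant bound on $f(a,2)$, with a little care needed to make the construction uniform in $a$ and to dispose of small exceptional cases.
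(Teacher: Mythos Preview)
Your treatment of the case $b\ge 3$ is exactly the paper's argument: Theorem~\ref{bcnprop}(i) gives $f(a,b)\le 6$ when $a\ge b$, and since $n<|\mathrm{Sym}(m)|$ the ratio $\ln|\mathrm{Sym}(m)|/\ln n$ exceeds $1$; Theorem~\ref{bcnprop}(ii) together with the preceding Lemma handles $a<b$.

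For $b=2$ the paper does not construct anything: it simply quotes \cite[Remark~1.6(ii)]{BGS}, which gives $f(a,2)\le 3$ outright, and the proposition then follows trivially (indeed $\ln((2a)!)/\ln n>2$ for every $a\ge 2$, since $4^a>\binom{2a}{a}$, so the right-hand side always exceeds $7$). Your plan instead re-derives a bound on $f(a,2)$ from scratch via matchings whose union is a Hamiltonian cycle. That is a legitimate route and is in spirit how the cited result is proved, but two remarks are in order. First, the joint stabilizer of two matchings whose union is a single $2a$-cycle is not cyclic of order $a$: it is the colour-preserving automorphism group of an alternately $2$-coloured $2a$-cycle, which is dihedral of order $2a$ (the $a$ even rotations together with the $a$ reflections through edge-midpoints). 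Second, your sketch does not pin down how many further matchings are needed nor exhibit them, so as written it is a plan rather than a proof; to turn it into one you would have to show that a single third matching suffices to kill this dihedral group (which is true, and recovers exactly the bound $f(a,2)\le 3$ that the paper cites). In short, the paper buys brevity by citing \cite{BGS}, while your approach is self-contained but leaves the key constructive step for $b=2$ unfinished.
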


\begin{proof}
If $b\ge 3$, this follows immediately from Theorem \ref{bcnprop}. And for $b=2$, 
Remark 1.6(ii) of \cite{BGS} gives $f(a,2) \leq 3$. 
\end{proof}

\subsection{Proof of Theorem \ref{generalalternating}} \label{Section2.3}

Let $G = \mathrm{Alt}(m)$ or $\mathrm{Sym}(m)$ act primitively on a set $\O$, and let $H$ be a point-stabilizer in $G$. The cases where $\O$ is a set of $k$-subsets or partitions of $\{1,\ldots ,m\}$ have been dealt with in Propositions \ref{maink} and \ref{partition}. Hence by the remarks at the beginning of the section, we may assume that $H$ is primitive on $\{1,\ldots ,m\}$. In this case, it is proved in \cite[Cor. 2]{BGS} that $b_{\Omega}(G) \leq 5$. This completes the proof of Theorem \ref{generalalternating}.

\section{Classical groups}
\label{Section3}

In this section we study base sizes of primitive actions of classical groups. Our main result is the following. 

\begin{thm}
\label{generalclassical}
Let $G$ be an almost simple primitive permutation group of degree $n$ whose socle is a classical simple group. 
Then $b(G) \leq 2 (\log|G| / \log n) + 16$.
\end{thm}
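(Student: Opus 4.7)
The plan is to split the analysis by the Aschbacher class of the point stabiliser $H = G_\alpha$, taken with respect to the natural module $V = \F_q^d$ of the classical socle, into \emph{subspace actions}---those in which $H$ is the stabiliser of a proper nonzero subspace of $V$ (possibly subject to form-theoretic constraints such as being totally singular, nondegenerate, or nonsingular)---and all remaining \emph{non-subspace} actions.

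For non-subspace actions, I would quote the theorem of Burness, Liebeck and Shalev, which establishes the Cameron--Kantor conjecture for classical groups in the strong form $b(G) \leq 7$. Since the right-hand side $2\log|G|/\log n + 16$ is always at least $16$, the conclusion is immediate in this case, and the entire content of the theorem concentrates in the subspace case.

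For subspace actions, let $k$ denote the dimension of the stabilised subspace, which by duality (in the linear case) and by the orbit structure for form groups we may take to satisfy $k \leq d/2$. The standard dimension formulas yield
\[
\log n = \Theta\bigl(k(d-k)\log q\bigr), \qquad \log|G| = \Theta(d^2 \log q),
\]
so that $\log|G|/\log n$ is comparable to $d^2/(k(d-k))$, which in particular is at least $d/k$. I would then establish, or extract from the existing literature on base sizes of classical groups on subspaces, an explicit upper bound of the shape $b(G) \leq \lceil d/k\rceil + c_0$ for a small absolute constant $c_0$, valid uniformly across the four classical families and every subspace orbit type. The natural route is a greedy construction of a base from successively chosen generic subspaces in general position, controlling the pointwise stabiliser after each choice. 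Combining this with the lower estimate $\log|G|/\log n \geq d/k$ yields $b(G) \leq 2\log|G|/\log n + 16$, the constant $16$ comfortably absorbing all error terms.

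The main obstacle will be to carry out this plan uniformly across the four classical families and their various subspace orbit types (totally singular, nondegenerate, nonsingular, and in orthogonal type split by sign), since both the exact formula for $|\Omega|$ and the precise base-size bound depend sensitively on the type. Secondary difficulties include handling the small-dimensional and small-field exceptions, where exceptional isomorphisms of low-rank classical groups can interfere with the uniform arguments; and a certain amount of fine tuning of the estimate $b(G) \leq \lceil d/k \rceil + c_0$ so that the factor $d/(d-k)$ does not inflate the leading constant past $2$ when $k$ is close to $d/2$. These will likely be dispatched by sharper counting in the generic ranges and by direct computation in \cite{GAP} for the remaining small cases.
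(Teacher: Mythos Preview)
Your overall strategy---split into subspace versus non-subspace actions, quote Burness for the latter, and in the former prove $b(G) \le d/k + c_0$ via an explicit construction of subspaces in general position, then combine with a lower bound on $\log|G|/\log n$---is exactly the paper's approach. However, there are three concrete gaps.

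First, your claimed lower bound $\log|G|/\log n \geq d/k$ is \emph{false} for symplectic, orthogonal and unitary groups. For these, $|G|$ is roughly $q^{d^2/2}$ rather than $q^{d^2}$, and one only obtains $\log|G|/\log n \geq d/(2k) - 1$ (this is Proposition~\ref{prop:ClassicalBound_nperk}). Combining with $b(G_0) \leq d/k + 11$ then gives $b(G_0) \leq 2\log|G|/\log n + 13$, and \emph{this} is the source of the leading constant $2$, not the $d/(d-k)$ factor you flag at the end. Your misidentification of where the $2$ arises would send the fine-tuning effort in the wrong direction.

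Second, your definition of ``subspace action'' is too narrow. When $G_0 = PSL(V)$ and $G$ contains a graph automorphism, the point stabiliser can fix a \emph{pair} $\{U,W\}$ of subspaces with $\dim U + \dim W = d$ and either $U \subseteq W$ or $V = U \oplus W$; and when $G_0 = Sp_{2m}(q)$ with $q$ even, the stabiliser $M$ may satisfy $M \cap G_0 = O_{2m}^\pm(q)$. Both count as subspace actions (cases (2) and (3) after the statement of Theorem~\ref{generalclassical}) and are not covered by the Burness bound for non-subspace actions; the paper treats them separately in Proposition~\ref{pairs} and at the end of \S\ref{Sec3.3}.

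Third, the explicit base construction (Theorem~\ref{classicalmain}) bounds $b(G_0)$ for the simple socle, not $b(G)$. The passage from $G_0$ to the almost simple $G$ costs up to three further base points, because $G/G_0$ admits a normal series of length at most $3$ with cyclic quotients, and a cyclic linear group has base size $1$. This is what lifts the additive constant from $13$ to $16$.
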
      

We shall divide the proof of this theorem into several subcases. First we give a definition, taken from \cite{LS99}. Let $G$ be an almost simple group with socle $G_0$, a classical group with natural module $V$, a vector space of dimension $d$  over a field $\F_q$ of characteristic $p$. We call a maximal subgroup $M$ of $G$ a {\it subspace subgroup} if it is reducible on $V$, or is an orthogonal group on $V$ embedded in a symplectic group with $p=2$; more specifically, $M$ is a subspace subgroup if one of the following holds:
\begin{itemize}
\item[(1)] $M=G_U$ for some proper nonzero subspace $U$ of $V$, where $U$ is totally singular, non-degenerate, or, if $G$ is orthogonal and $p=2$, a nonsingular 1-space ($U$ is any subspace if $G_0=PSL(V)$);
\item[(2)] $G_0 = PSL(V)$, $G$ contains a graph automorphism of $G_0$, and $M\cap G_0 = (G_0)_{U,W}$ where $U,W$ are proper nonzero subspaces of $V$, $\dim V = \dim U+\dim W$ and either $U\subseteq W$ or $V = U\oplus W$;
\item[(3)] $G_0 = Sp_{2m}(q)$, $p=2$ and $M\cap G_0 = O^{\pm}_{2m}(q)$.
\end{itemize}
Note that in (3), if we regard $G_0$ as the isomorphic orthogonal group $O_{2m+1}(q)$, then $M\cap G_0 = O_{2m}^\pm(q)$ is the stabilizer of a hyperplane of the natural module of dimension $2m+1$.

If $M$ is a subspace subgroup, we call the action of $G$ on the coset space $G/M$ a {\it subspace action}.

Bases for non-subspace actions of classical groups were studied in detail in \cite{Burness}, so our main task is to prove Theorem \ref{generalclassical} for subspace actions. First we require the following general bound.

\begin{prop} \label{prop:ClassicalBound_nperk}
Let $G$ be as above, and suppose $M$ is as in $(1)$, so that the coset space $X = G/M$ is a $G$-orbit of $k$-dimensional subspaces of $V$, for some $k$. Assume also that $k \le d/2$. Then 
\[
 \frac{\log|G|}{\log|X|} \ge \frac{d}{tk}-1,
\]
where $t=1$ if $G_0 = PSL(V)$, and $t=2$ otherwise.
\end{prop}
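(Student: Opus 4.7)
The approach is to bound $|X|$ from above and $|G|$ from below using the explicit order formulae for classical groups, and then close the inequality by a short algebraic calculation.

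Let $q_0 = q^2$ if $G_0$ is unitary and $q_0 = q$ otherwise, so that $V \cong \mathbb{F}_{q_0}^d$. Since $X$ is a $G$-orbit of $k$-dimensional subspaces of $V$, we have
\[
|X| \le \binom{d}{k}_{q_0} = \prod_{i=0}^{k-1}\frac{q_0^{d-i}-1}{q_0^{k-i}-1} < q_0^{k(d-k+1)},
\]
the last inequality following from $q_0^{k-i}-1 \ge q_0^{k-i-1}$ (valid for $q_0\ge 2$) in each factor. For the lower bound on $|G|$, the factorisation $|GL_d(q_0)| = q_0^{d^2}\prod_{j=1}^d (1-q_0^{-j})$ together with the analogous product formulae for the symplectic, orthogonal and unitary cases, divided by a central subgroup of order at most $qd$, yields the uniform estimate
\[
\log|G| \ge \log|G_0| \ge \frac{d^2}{t}\log q_0 - (c_1 d + c_2)
\]
for small absolute constants $c_1, c_2$; here $t$ is chosen precisely so that the leading $d^2$-coefficient is correct in each case (linear, unitary, symplectic, orthogonal).

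Combining these two bounds, the target $\log|G|/\log|X| \ge d/(tk) - 1$ reduces, after multiplying through by $tk\log|X|/\log q_0$, to
\[
d^2 - \frac{t(c_1 d + c_2)}{\log q_0} \ge (d-tk)(d-k+1).
\]
Since $k \le d/2$ and $t \le 2$, we have $0 \le d-tk \le d$ and $d-k+1 \le d$, so the right-hand side is at most $d^2$, and the linear-in-$d$ correction on the left is absorbed by the ``$-1$'' slack in the statement for all but finitely many small parameter values $(d, q, k)$; those can be verified by direct calculation from the exact orders.

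The main technical obstacle is the uniform book-keeping of the constants $c_1, c_2$ across the four classical families, in particular the factor-of-two discrepancy between $q$ and $q_0$ in the unitary case and the $\gcd$-factors in the orders of $PSL_d(q)$ and $PSU_d(q)$. I would handle these by a type-by-type computation via the explicit product formulae, isolating the dominant $\frac{d^2}{t}\log q_0$-term and placing everything else into the $O(d)$ remainder.
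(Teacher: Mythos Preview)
Your strategy --- upper-bound $|X|$ by the Gaussian binomial, lower-bound $|G|$ by its dominant $q$-power, then check an algebraic inequality --- is exactly the paper's. But the paper does it in three lines with no case analysis: it uses the uniform bounds $|G|>q^{d^2/t-d}$ and $|X|\le\binom{d}{k}_q\le q^{dk-k^2+k}$ (no separate $q_0$ is needed, since in the paper's convention $V$ is $d$-dimensional over $\F_q$ in every case, unitary included), and then the target $\frac{d^2/t-d}{dk-k^2+k}\ge\frac{d}{tk}-1$ clears denominators to $(k-1)\bigl((1+t)d-tk\bigr)\ge 0$, which holds since $k\ge 1$ and $tk\le d$. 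No type-by-type book-keeping, no exceptional parameters.

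Your version has a genuine gap. First, the bound $\log|G_0|\ge\frac{d^2}{t}\log q_0-(c_1d+c_2)$ with \emph{absolute} $c_1,c_2$ is false: for $G_0=PSL_d(q)$ one has $\log|G_0|=(d^2-1)\log q+O(\log d)$, so the deficit from $d^2\log q$ is $\log q+O(\log d)$, which for fixed $d$ and $q\to\infty$ exceeds any fixed $c_1d+c_2$. Second, even granting some corrected bound, your closing step only notes that $(d-tk)(d-k+1)\le d^2$; this proves nothing, since the left side $d^2-(\text{correction})$ is also at most $d^2$. The ``$-1$ slack'' you invoke has already been spent in arriving at that inequality, and with $c_1,c_2$ unspecified the appeal to ``finitely many exceptions'' cannot be made precise. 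The fix is simply to use $|G|>q^{d^2/t-d}$ from the start and carry out the one-line algebra above.
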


\begin{proof}
Observe that $|G| > q^{(d^2/t)-d}$, while
  \[
  |X|\le \binom{d}{k}_q:=\frac{(q^d-1)(q^d-q)\cdots (q^d-q^{k-1})}
  {(q^k-1)(q^k-q)\cdots (q^k-q^{k-1})}\leq \left(\frac{q^d}{q^{k-1}}\right)^k=
  q^{dk-k^2+k}.
  \]
Hence 
  \[\frac{\log|G|}{\log|X|}\geq
  \frac{(d^2/t)-d}{dk-k^2+k}\geq \frac{d}{tk}-1.
  \]
\end{proof}

\subsection{Action on an orbit of subspaces}
\label{classicalorbit}

In this subsection we prove Theorem \ref{generalclassical} for subspace actions of classical simple groups as in case (1) in the list above. This is the main part of the proof of the theorem.

\begin{thm}
\label{classicalmain}
  Let $G$ be a simple classical group on $V$, a vector space of dimension $d$ over $\F_q$. Let $X$ 
  be a $G$-orbit of $k$-dimensional subspaces of $V$ with $k \leq d/2$, 
  on which $G$ acts primitively. Then 
  $$b_X(G)\leq \frac{d}k+11.$$ 
\end{thm}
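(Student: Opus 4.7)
The plan is to construct an explicit base of size at most $d/k + 11$ in two stages. Proposition~\ref{prop:ClassicalBound_nperk} already forces roughly $d/k$ base subspaces on counting grounds, so the additive constant $11$ is essentially all the slack available, and the second stage must be performed by a bounded and uniform construction.

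In the \emph{decomposition stage}, in the linear case and whenever $X$ consists of nondegenerate $k$-subspaces, I would pick $W_1,\ldots,W_s\in X$ with $V=W_1\oplus\cdots\oplus W_s$ (orthogonal direct sum in the classical setting), where $s=\lceil d/k\rceil$. In the totally singular case no direct sum of members of $X$ can span $V$, so one must instead use hyperbolic pairs $(W_i,W_i')$ with each $W_i+W_i'$ a nondegenerate $2k$-space and $V=\bigoplus_i (W_i+W_i')$, contributing $2\lceil d/(2k)\rceil\le d/k+2$ base subspaces in total. In either setting, the pointwise stabilizer inside $G$ of these chosen subspaces lies in the block-stabilizer $\prod_i \mathrm{GL}(W_i)\cap G$, or its natural diagonal analogue in the hyperbolic-pair case.

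In the \emph{clean-up stage} I would adjoin a bounded number of further $k$-subspaces that are transverse to the decomposition, typically graphs of suitably chosen isomorphisms $W_1\to W_i$. These first collapse the independent block factors into a single diagonal copy of $\mathrm{GL}(W_1)\cap G$; a few additional generic $k$-subspaces then cut this single factor down through the centralizer of a generic element to the scalar subgroup; and a final subspace eliminates the scalars. Handled with care this uses well under the remaining budget of eleven subspaces, uniformly across the classical types (linear, symplectic, unitary, orthogonal of each parity) and across the possible orbit types of $k$-subspace (nondegenerate, totally singular, and the non-singular $1$-spaces that appear for orthogonal groups in characteristic~$2$).

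I expect the main obstacle to be the totally singular case: the hyperbolic-pair decomposition already consumes almost $d/k$ of the budget, the residual stabilizer acts \emph{diagonally} on each hyperbolic pair rather than as a free product of $\mathrm{GL}$-factors, and the clean-up isomorphisms must intertwine pairs while producing subspaces that actually lie in $X$. A small finite list of exceptional low-dimensional cases (small $d$, small $q$, or small $d/k$) will almost certainly need to be dispatched separately, by direct argument or computer check, in order to certify the exact additive constant $11$.
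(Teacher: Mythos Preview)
Your outline is essentially the paper's own approach: a decomposition stage contributing about $d/k$ subspaces (a direct orthogonal sum of members of $X$ in the linear and nondegenerate cases, hyperbolic pairs $V_i^{(x)},V_i^{(y)}$ in the totally singular case), followed by a bounded clean-up using graphs of isomorphisms between the blocks to force the stabilizer first into a diagonal and then into scalars.

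Two minor corrections to your expectations. First, the paper needs no computer checks at all for this theorem; every case is dispatched by an explicit construction. The genuinely special cases that require separate treatment are not generic ``small $d$, small $q$'' situations but rather: (i) nondegenerate $k$-spaces of Witt index $0$ (so $k\le 2$, orthogonal), where the hyperbolic-within-blocks machinery is unavailable and one uses ad hoc quadratic-form calculations; and (ii) the action of $\Omega^+_{2k}(q)$ on one orbit of totally singular $k$-spaces, where membership in $X$ is a parity condition on $\dim(U\cap U')$ and one must check that all chosen subspaces lie in the same orbit (the paper reduces odd $k$ to even $k-1$ here). Second, in the nondegenerate case the clean-up is more delicate than you suggest: the paper splits each block $V_s$ into a hyperbolic part $V_s^{(h)}$ and an anisotropic part $V_s^{(m)}$, synchronizes the hyperbolic parts first, and then repeats with a second choice of hyperbolic complement to cover the anisotropic remainder---this is where the full $+11$ is spent.
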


\begin{proof}
  In every subcase, we will define a base $\mB\subset X$ of $G$ in a
  number of steps.  We do this by starting with $\mB=\emptyset$ and at
  each step adding some subspaces to $\mB$. Throughout the proof, $G_{(\mB)}$ denotes the pointwise stabilizer of $\mB$ -- that is, the set of group elements that fix all the subspaces in $\mB$.
  
\vspace{2mm}
\noindent  \emph{Action on the set of all $k$-dimensional subspaces.}

  First, let us assume that $X$ is the set of all
  $k$-dimensional subspaces. Let $d=ak+r$ for $a\geq 2$ and $0\leq r<k$.
  Take any direct sum decomposition $V=V_1\oplus \ldots\oplus V_a\oplus U$ with 
  $\dim V_i=k$ for $1\leq i\leq a$, and let $V_1,\ldots,V_a\in \mB$. 
  Fix a basis $B_i=\{x_1^{(i)},\ldots, x_k^{(i)}\}\in V_i$ for each $i$ and 
  define $W_1=\langle \sum_{i=1}^a x_s^{(i)}\,|\,1\leq s\leq k\rangle\in X$
  and put $W_1$ into $\mB$. Then the matrix form of 
  the restriction of a $g\in G_{(\mB)}$ to $V_1\oplus\ldots\oplus V_a$ is 
  a block diagonal matrix with equal blocks (with respect to
  the basis $B_1\cup\ldots\cup B_a$). 
  Define $M_g=[g_{V_1}]_{B_1}=[g_{V_2}]_{B_2}=\ldots=[g_{V_a}]_{B_a}$ and
  let $\{\gamma,\delta\}\subset SL(V_2)$ be a generating set of $SL(V_2)$ 
  and $C=[\gamma]_{B_2},\,D=[\delta]_{B_2}$ be their matrix forms. 
  Then $g\in G_{(\mB)}$ fixes the subspaces 
  \[
  W_2=\langle x_s^{(1)}+\gamma(x_s^{(2)})\,|\,1\leq s\leq k\rangle\in X,\quad
  W_3=\langle x_s^{(1)}+\delta(x_s^{(2)})\,|\,1\leq s\leq k\rangle\in X
  \]
  if and only if $M_g$ commutes with both $C$ and $D$. Thus, 
  putting $W_2$ and $W_3$ into $\mB$, it follows that $G_{(\mB)}$ acts as scalars 
  on $V_1\oplus\ldots\oplus V_a$. 
  Finally, if $r>0$ then let 
  $\{f_{1},\ldots,f_r\}$ be a basis of $U$ and define
  \[
  W_4=\langle f_1,\ldots,f_r,x_{r+1}^{(1)},\ldots,x_{k}^{(1)}\rangle,\qquad
  W_5=\langle f_1+x_1^{(2)},\ldots,f_r+x_r^{(2)},x_{r+1}^{(2)},\ldots,x_{k}^{(2)}
  \rangle.
  \]
  Adding $W_4$ and $W_5$ to $\mB$ it is easy to see that $G_{(\mB)}$
  contains only scalar transformations. Thus, $b_X(G)\leq a+5\leq
  \frac{d}k+5$ for this case.

\vspace{2mm}
\noindent   \emph{Action on an orbit of non-degenerate subspaces.} 

Now we turn
  to the case when $G$ is a group fixing some non-degenerate form $[\,,\,]$ on
  $V$ and $X$ is a $G$-orbit of non-degenerate subspaces. In the special 
  case $d=2k$, we also assume that the Witt index of elements of $X$ is
  no more than the Witt index of elements of $X^\perp$ (this is 
  only interesting in the orthogonal case, when $k$ is even and 
  $V$ has Witt index $k-1$). This will guarantee that the sums defining 
  $u_i$ and $v_i$ below  will have at least two terms.

  Let $d=ak+r$ with $1\leq r\leq k$ and take any
  orthogonal decomposition $V=V_1\oplus \ldots\oplus V_a\oplus
  V_{a+1}$ with $V_1,\ldots,V_a\in X$. Put $V_1,\ldots,V_a$
  into $\mB$. Then any $g\in G_{(\mB)}$ also fixes $V_{a+1}=(\sum_{i=1}^a
  V_i)^\perp$. 

  Let $l$ and $m\leq 2$ denote the Witt index and the Witt defect of
  the subspaces in $X$, so $k=2l+m$.  First, let us assume that $l\geq
  1$.  Then for every $1\leq s\leq a$, the subspace $V_s$ is a direct
  sum of orthogonal subspaces $V_s=V_s^{(h)}\oplus V_s^{(m)}$, where
  each $V_s^{(h)}$ contains a basis
  $B_s=\{x_1^{(s)},\ldots,x_l^{(s)},y_1^{(s)},\ldots,y_l^{(s)}\}$ with
  $[x_i^{(s)},x_j^{(s)}]=[y_i^{(s)},y_j^{(s)}]=0,\,[x_i^{(s)},y_j^{(s)}]=\delta_{ij}$
  for all $i,j$ and $V_s^{(m)}$ has dimension and Witt defect
  $m$. (For orthogonal groups of characteristic 2, we also have
  $Q(x_i^{(s)})=Q(y_i^{(s)})=0$ for all $i$, where $Q$ is the underlying quadratic form.)  Furthermore, let $l'$
  be the minimum of $l$ and the Witt index of $V_{a+1}$ and $m'=\dim
  (V_{a+1})-2l'$. Similarly to the above, choose an orthogonal
  decomposition $V_{a+1}=V_{a+1}^{(h)}\oplus V_{a+1}^{(m)}$ along with
  a basis $B_{a+1}=\{x_1^{(a+1)},\ldots,x_{l'}^{(a+1)},
  y_1^{(a+1)},\ldots,y_{l'}^{(a+1)}\}$ of $V_{a+1}^{(h)}$ satisfying
  $[x_i^{(a+1)},x_j^{(a+1)}]=[y_i^{(a+1)},y_j^{(a+1)}]=0,\,
  [x_i^{(a+1)},y_j^{(a+1)}]=\delta_{ij}$ for $1\leq i,j\leq l'$,
  and define $x_i^{(a+1)}=y_i^{(a+1)}=0$ for $l'<i\leq l$. 
  
For $1\leq i\leq l$, define
\[
u_i=\sum_{s=1}^{a+1}  x_i^{(s)},\, v_i=\sum_{s=1}^{(a+1)} y_i^{(s)}.
\]
We define the subspaces
  \begin{align*}
  W_1^{(h)}&=\langle u_1,\ldots,u_l,y_1^{(1)},\ldots,y_l^{(1)}\rangle,\quad
  &W_2^{(h)}&=\langle x_1^{(1)},\ldots,x_l^{(1)},v_1,\ldots,v_l\rangle,\\
  W_3^{(h)}&=\langle u_1,\ldots,u_l,y_1^{(2)},\ldots,y_l^{(2)}\rangle,\quad
  &W_4^{(h)}&=\langle x_1^{(2)},\ldots,x_l^{(2)},v_1,\ldots,v_l\rangle.
  \end{align*}
  Then $W_t:=W_t^{(h)}\oplus V_1^{(m)}\in X$ for each $1\leq t\leq 4$. 
  Adding $W_1,W_2,W_3,W_4$ to $\mB$, we see that any $g\in G_{(\mB)}$ fixes 
  each $V_s^{(h)}$ and, moreover, the matrix form of each restriction
  $g_{V_s^{(h)}}$ satisfies
  \[
  \left[g_{V_1^{(h)}}\right]_{B_1}=
  \left[g_{V_2^{(h)}}\right]_{B_2}=\ldots=
  \left[g_{V_a^{(h)}}\right]_{B_a}=\begin{pmatrix}A_g&0\\0&B_g\end{pmatrix}
  \]
  for some $A_g,B_g\in GL(l,q)$.
  The use of the $x_i^{(a+1)},y_i^{(a+1)}$ as summands in the $u_i$ and $v_i$ 
  also implies that 
\[
\left[g_{V_{a+1}^{(h)}}\right]_{B_{a+1}}=
  \begin{pmatrix}A_g'&0\\0&B_g'\end{pmatrix},
\]
 where $A_g'$ and $B_g'$ are left  upper $l'\times l'$ submatrices of $A_g$ and $B_g$, respectively. 

  Adding also the subspace $W_5=W_5^{(h)}\oplus V_1^{(m)}$ with
  $W_5^{(h)}:=\langle x_i^{(1)},y_i^{(1)}+x_i^{(2)}\,|\,1\leq i\leq
  l\rangle$ we can also guarantee that $A_g=B_g$ holds for any $g\in
  G_{(\mB)}$.

  Let $B_2^{(x)}=\{ x_1^{(2)},\ldots,x_l^{(2)}\}$ and 
  $V_2^{(x)}$ be the subspace spanned by $B_2^{(x)}$. Choose
  $\varphi,\psi\in \textrm{End}(V_2^{(x)})$ generating
  $\textrm{End}(V_2^{(x)})$ as an algebra.  Define
  \[
  W_6^{(h)}=\langle x_i^{(1)}+\varphi(x_i^{(2)}),
  y_i^{(1)}\,|\,1\leq i\leq l\rangle,\quad
  W_7^{(h)}=\langle x_i^{(1)}+\psi(x_i^{(2)}),
  y_i^{(1)}\,|\,1\leq i\leq l\rangle.
  \]
  and let $W_6:=W_6^{(h)}\oplus V_1^{(m)},\ W_7:=W_7^{(h)}\oplus V_1^{(m)}$.
  Adding $W_6,W_7$ to $\mB$, we see that $g_{V_2^{(x)}}$ commutes with 
  both $\varphi$ and $\psi$ for any $g\in G_{(\mB)}$. 
  Thus $g_{V_2^{(x)}}$ is a scalar transformation. Therefore, any
  $g\in G_{(\mB)}$ acts as a scalar on $V_1^{(h)}\oplus\ldots\oplus V_{a+1}^{(h)}$. 
 
  For every $1\leq s\leq a+1$, 
  choose other orthogonal decompositions $V_s=V_s^{(h)'}\oplus
  V_s^{(m)'}$ such that each $V_s^{(h)'}$ is isometric to $V_s^{(h)}$ 
  and $V_s^{(h)}+V_s^{(h)'}=V_s$.  Applying similar
  constructions as for $W_1,\ldots,W_4$ before, by adding $4$ further 
  subspaces to $\mB$ we can synchronize the 
  action of any $g\in G_{(\mB)}$ on each $V_s^{(h)'}$ with its
  action on each $V_t^{(h)}$. Thus, now each $g\in G_{(\mB)}$ acts as a scalar on the 
  whole vector space $V_1\oplus\ldots\oplus V_{a+1}$. Thus, 
  $b_X(G)\leq a+7+4\leq \frac{d}{k}+11$ in this case.
  
  Now, let us assume that $l=0$, so $k=m\leq 2$. The case $k=1$ is trivial.
  The case $k=m=2$ implies that $V$ is orthogonal. 
  We can also assume that $a\geq 3$, since otherwise $d\leq 6$.
  Then each $V_s$ has a basis $x^{(s)},y^{(s)}$ with 
  $Q(x^{(s)})=1,\, Q(y^{(s)})=\alpha,\,[x^{(s)},y^{(s)}]=1$, 
  where $\alpha\in\FF q$ is such that the polynomial 
  $t^2+t+\alpha$ is irreducible over $\FF q$. Additionally, choose an
  arbitrary spanning set $\{ x^{a+1},y^{a+1} \}$ of $V_{a+1}$. 
  Since $\{Q(z)\,|\,z\in V_s\}=\FF q$, 
  we can define 
  \[u_1=\sum_{s=2}^{a+1} x^{(s)}+z^{(1)},\quad 
    v_1=\sum_{s=2}^{a+1} y^{(s)}+w^{(1)},\quad
    u_2=\sum_{s=1}^{a-1} x^{(s)}+z^{(a)},\quad 
    v_2=\sum_{s=1}^{a-1}y^{(s)}+w^{(a)}
  \]
  with $z^{(1)},w^{(1)}\in V_1,\,z^{(a)},w^{(a)}\in V_a$ such that 
  $Q(u_1)=Q(u_2)=1,\,Q(v_1)=Q(v_2)=\alpha$.
  Now, let
  \begin{gather*}
  W_1=\langle u_1   ,y^{(2)}\rangle,\quad
  W_2=\langle u_1   ,y^{(3)}\rangle,\quad
  W_3=\langle x^{(2)},v_1   \rangle,\quad
  W_4=\langle x^{(3)},v_1   \rangle,\\
  W_5=\langle u_2   ,y^{(1)}\rangle,\quad
  W_6=\langle u_2   ,y^{(2)}\rangle,\quad
  W_7=\langle x^{(1)},v_2   \rangle,\quad
  W_8=\langle x^{(2)},v_2   \rangle.
  \end{gather*}
  Adding each $W_i$ to $\mB$ we see that the restriction of 
  any $g\in G_{(\mB)}$ to the subspaces $V_1,\ldots,V_a$ has matrix form
  \[
  \Big[g_{V_1}\Big]_{\{x^{(1)},y^{(1)}\}}=\ldots=
  \Big[g_{V_a}\Big]_{\{x^{(a)},y^{(a)}\}}=
  \begin{pmatrix}c&0\\0&d\end{pmatrix}
  \textrm{ for some }c,d\in \FF q.
  \]
  Using that $Q(g(x^s))=1,\,Q(g(y^{(s)}))=\alpha,\,[g(x^{(s)},g(y^{(s)})]=1$ 
  it follows that $c=m=\pm 1$, so any $g\in G_{(\mB)}$ acts on 
  $V_1\oplus\ldots\oplus V_a$ as a scalar transformation. 
  The use of $x^{(a+1)},y^{(a+1)}$ guarantees that $g\in G_{(\mB)}$ is a scalar on the 
  whole of $V$. Thus, $b_X(G)\leq \frac{d}k+8$ for this case. 

\vspace{2mm}
\noindent  \emph{Action on an orbit of totally singular subspaces.} 

From now on, let $X$ be the set of $k$-dimensional totally singular subspaces
  of $V$. Again, we can assume that $k\geq 2$. 

  Let $l$ be the Witt index and let $m\leq 2$ be the Witt defect of $V$, so
  $k\leq l$ (since otherwise $X=\emptyset$) and $d =2l+m$.  Let
  $l=ak+r$ for $0\leq r<k$ and denote $w(s)=k$ for $1\leq s\leq k$ and 
  $w(a+1)=r$. 
  Take an orthogonal decomposition
  $V=V_1\oplus\ldots\oplus V_a\oplus V_{a+1}\oplus U$ such that $V_s$ has
  dimension $2w(s)$ and Witt index $w(s)$ for each $1\leq s\leq a+1$.  For
  every $1\leq s\leq a+1$ let
  $B_s=\{x_1^{(s)},\ldots,x_{w(s)}^{(s)},y_1^{(s)},\ldots,y_{w(s)}^{(s)}\}$ be a
  basis of $V_s$ such that 
  $V_s^{(x)}=\langle x_1^{(s)},\ldots,x_{w(s)}^{(s)}\rangle,
  \ V_s^{(y)}=\langle y_1^{(s)},\ldots,y_{w(s)}^{(s)}\rangle$ are 
  $w(s)$-dimensional singular subspaces, and 
  $[x_i^{(s)},y_j^{(s)}]=\delta_{ij}$ for every $1\leq i,j\leq w(s)$. 
  Furthermore, define $x_i^{(a+1)}=y_i^{(a+1)}=0$ for $r<i\leq k$. 
  Finally, take the additional 
  $k$-dimensional singular subspaces 
  \[
\begin{array}{l}
  V_{a+1}^{(x)'}=\langle x_1^{(a+1)},\ldots,x_r^{(a+1)},
  x_{r+1}^{(1)},\ldots,x_k^{(1)}\rangle,\\
  V_{a+1}^{(y)'}=\langle y_1^{(a+1)},\ldots,y_r^{(a+1)},
  y_{r+1}^{(1)},\ldots,y_k^{(1)}\rangle.
\end{array}
  \]
Let $u_i=\sum_{s=1}^{(a+1)} x_i^{(s)},\,
  v_i=\sum_{s=1}^{(a+1)} y_i^{(s)}$ for $1\leq i\leq k$ and define
  \[
  W_1=\langle u_1,\ldots,u_k\rangle,\qquad 
  W_2=\langle v_1,\ldots,v_k\rangle.
  \]
  First, add each of the subspaces $V_1^{(x)},V_1^{(y)},\ldots,V_a^{(x)},V_a^{(y)},
  V_{a+1}^{(x)'},V_{a+1}^{(y)'},W_1,W_2$ to $\mB$. 
  Then the subspaces $V_s$ for each $1\leq s\leq a+1$ are fixed by any 
  $g\in G_{(\mB)}$
  and the restrictions $g_{V_s}$ have matrix form 
  \begin{gather*}
  \Big[g_{V_1}\Big]_{B_1}=\ldots=\Big[g_{V_a}\Big]_{B_a}=
  \begin{pmatrix}A_{g}&0\\0&(A_{g})^{-T}\end{pmatrix},\\
  \Big[g_{V_{a+1}}\Big]_{B_{a+1}}=
  \begin{pmatrix}A'_{g}&0\\0&(A'_{g})^{-T}\end{pmatrix},
  \end{gather*}
  where $A_g\in GL(k,q)$ and $A'_g$ is the left upper $r\times r$ submatrix of 
  $A_g$. 

  Next, we define additional $k$-dimensional singular subspaces 
  of the form
  \begin{align*}
  W^{(x)}(C)&=\Big\langle \sum_{s=1}^{a+1}\Big(x_j^{(s)}+
    \sum_{i=1}^k c_{ij}y_i^{(s)}\Big)\,\Big|
    \,1\leq j\leq k\Big\rangle,\\
  W^{(y)}(C)&=\Big\langle \sum_{s=1}^{a+1}\Big(y_j^{(s)}+
    \sum_{i=1}^k c_{ij}x_j^{(s)}\Big)\,\Big|
    \,1\leq j\leq k\Big\rangle,
  \end{align*}
  where $C=(c_{ij})\in M(k,q)$.  The subspaces $W^{(x)}(C)$ and $W^{(y)}(C)$
  are singular if the matrix $C$ is symmetric (when $V$ is a
  symplectic space) or anti-symmetric (when $V$ is an orthogonal or a
  unitary space).  Furthermore, $g\in G_{(\mB)}$ fixes $W^{(x)}(C)$
  (resp. $W^{(y)}(C)$) if and only if $A_g^T C=CA_g^{-1}$
  (resp. $A_g C=CA_g^{-T})$ holds.
  
  First, let us assume that $V$ is a symplectic space and choose
  $C,D\in M(k,q)$ symmetric matrices, which generate the full matrix
  algebra $M(k,q)$ (as an algebra). Adding $W^{(y)}(I), W^{(y)}(C),
  W^{(y)}(D)$ to $\mB$ we see that any $g\in G_{(\mB)}$ satisfies
  $A_{g}=A_g^{-T}$, and, therefore, $A_{g}C=CA_g,\,
  A_{g}D=DA_g$. It follows that $A_g$
  is a scalar matrix for any $g\in G_{(\mB)}$. Thus, $g$ acts as a scalar on the 
  whole $V=V_1\oplus\ldots\oplus V_a\oplus V_{a+1}$. 

  Now, let $V$ be an orthogonal or unitary space and 
  choose antisymmetric matrices $C=E_{12}-E_{21},\,
  D=\sum_{i=2}^{k-1}(E_{i,i+1}-E_{i+1,i})$. (Here 
  $\{E_{ij}\,|\,1\leq i,j\leq k\}$ denotes the usual basis of the full matrix 
  algebra $M(k,q)$.) 
  Add the subspaces $W^{(x)}(C),W^{(y)}(C),W^{(x)}(D),W^{(y)}(D)$ to $\mB$ and 
  let $g\in G_{(\mB)}$. Then we have 
  $A_g C A_g^T=A_g^T CA_g=C$ and $A_g D A_g^T=A_g^T DA_g=D$.
  Using the implication 
  \[
AXA^T=X,\,A^TYA=Y \Rightarrow  A XY= A XA^TYA=XYA,
  \]
  we see that $A_g$ commutes with every product $P$ of $C$'s and $D$'s with an
  even number of terms. Similarly, $A_g PA_g^T=A_g^TPA_g=P$ holds for
  every product $P$ of $C$'s and $D$'s with an odd number of terms.  In
  particular, $A_g$ commutes with
  $CD=E_{13},\,-DC=E_{31},-CD^2C=E_{11}$ etc.  Continuing this way, we
  see that $A_g=\lambda\cdot I$ is a scalar matrix.  The equation $A_g
  C A_g^T=C$ also shows that $\lambda^2=1$ and so $A_g=A_g^{-T}=\pm
  I$. Thus, any $g\in G_{(\mB)}$ is a scalar transformation on
  $V_1\oplus\ldots\oplus V_{a+1}$. If $U\neq 0$, we can choose
  a $k$-dimensional singular subspace $V_{a+2}^{(x)}\leq V_1\oplus U$
  satisfying $V_1+V_{a+2}^{(x)}=V_1+U$. Let
  $x_1^{(a+2)},\ldots,x_k^{(a+2)}$ be any basis of
  $V_{a+2}^{(x)}$. Adding the subspaces $V_{a+2}^{(x)},\ \langle
  x_1^{(a+2)}+y_1^{(1)},\ldots,x_k^{(a+2)}+y_k^{(1)}\rangle$ to $\mB$
  gives the result. So, $b_X(G)\leq 2a+10\leq \frac{d}k+10$.

  The above argument works if $X$ is the set of all totally singular
  subspaces, which is indeed a $G$-orbit in most cases.  The only
  exception is when $V$ is an orthogonal space, $d=2k$, and
  $G = \O^+(V)$, so we assume this from now on.  Then
  two totally singular $k$-dimensional subspaces $V_1,V_2$ are in the same $G$-orbit if
  and only if $\dim (V_1\cap V_2)\equiv k\pmod 2$. Since the full 
  orthogonal group $O(V)$ interchanges the two  $G$-orbits, it
  does not matter, which orbit we choose.  Note that in the above
  construction the subspaces $V_1^{(x)}, V_1^{(y)},
  W^{(x)}(C),W^{(y)}(C),W^{(x)}(D),W^{(y)}(D)$ are in the same
  $G$-orbit provided that $k$ is even (the further subspaces defined
  in the proof are now meaningless). So, $b_X(G)\leq 6$ in this
  case.  Now, let $k$ be odd, and choose an orthogonal decomposition
  $V=\langle x,y\rangle \oplus U$, where $x,y$ is a hyperbolic pair.
  Then $\dim U=d-2=2(k-1)$, so the above construction works for a
  $G_U$-orbit of ($k-1$)-dimensional totally singular subspaces of $U$,
  since $k-1$ is even. That is, there are $6$ totally singular ($k-1$)-dimensional subspaces $U_1,\ldots,U_6$ of $U$, 
  which form a base for the action of $G_U$ on $U$. 
  By construction, 
  \[
  U_1=\langle x_1,\ldots,x_{k-1}\rangle,\quad
  U_2=\langle y_1,\ldots,y_{k-1}\rangle
  \]
  with $[x_i,y_j]=\delta_{ij}$ for $1\leq i,j \leq k-1$. 
  Define the subspaces $V_s=\langle x\rangle\oplus U_s$ for 
  $1\leq s\leq 6$.
  Furthermore, let
  \[
  W_1=\langle y,y_1,x_2,\ldots,x_{k-1}\rangle,\quad
  W_2=\langle y,x_1,y_2,\ldots,y_{k-1}\rangle.
  \]
  Then all of $V_1,\ldots,V_6,W_1,W_2$ are totally singular
  $k$-dimensional subspaces with pairwise odd-dimensional
  intersections, so they are in the same $G$-orbit $X$.  Adding
  $V_1,\ldots, V_6$, $W_1$, $W_2$ to $\mB$ we see that any $g\in G_{(\mB)}$
  fixes the subspaces $\langle x\rangle=V_1\cap V_2,\, \langle
  y\rangle=W_1\cap W_2$ and $U=(V_1+V_2)\cap (W_1+W_2)$.  Furthermore,
  $g\in G_{(\mB)}$ also fixes $U_s=V_s\cap U$ for each $s$, so $g_U$ is a
  scalar transformation by the definition of the $U_s$.  Adding also
  the subspace
  \[
  W_3=\langle x+x_1,y-y_1,x_2,\ldots,x_{k-1}\rangle \in X
  \]
  to $\mB$, we get that any $g\in G_{(\mB)}$ is a scalar transformation on
  the whole of $V$. Hence $b_X(G)\leq 9$ in this case. 
\end{proof}

\begin{rem}
  By a more detailed argument, Burness, Guralnick and Saxl were able to
  calculate the exact base size for a classical group over an 
  algebraically closed field acting on an orbit of subspaces of its
  natural module \cite[Section 4]{BGS2}.  While part of their
  constructions could be translated to the finite case, we had to give
  new constructions for other cases. (This is especially true for the
  orthogonal case, since, in contrast to the finite case discussed
  above, in any dimension there is just one type of non-degenerate
  orthogonal space over an algebraically closed field.)
\end{rem}

\subsection{Action on pairs of subspaces}
\label{transpose}
In this subsection we handle the subspace actions arising from case (2) in the list after the statement of Theorem \ref{generalclassical}.

\begin{prop}\label{pairs}
Let $G = PSL(V) = PSL_d(q)$, and let $M$ be the stabilizer in $G$ of a pair $U,W$ of nonzero subspaces, where
$\dim U = k < d/2$, $\dim W = d-k$, and  either $U\subseteq W$ or $V = U\oplus W$. Let $X$ be the coset space $G/M$. 
Then 
\[
b_{X}(G) \leq \frac{d}{k} + 11 \leq 2\frac{\log |G|}{\log |X|} + 12.
\]
\end{prop}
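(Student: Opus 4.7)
The plan is to transport a base of $G_0 = PSL_d(q)$ acting on $k$-subspaces (obtained from Theorem \ref{classicalmain}) to a base for the pair action of $G$ on $X$, adjusting by at most one extra pair to account for the outer coset $G \setminus G_0$ if $G$ is taken to be an almost simple group containing a graph automorphism (which is the setting of case (2) in the classification above).

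Specifically, I would invoke the ``action on the set of all $k$-dimensional subspaces'' portion of the proof of Theorem \ref{classicalmain} to produce a base $\{U_1, \ldots, U_s\}$ for $G_0$ on $k$-subspaces of size $s \leq a + 5 \leq d/k + 5$, where $d = ak + r$ and $a \geq 2$ (valid because $k \leq d/2$). For each $U_i$, choose a $(d-k)$-dimensional subspace $W_i$ so that $(U_i, W_i) \in X$: any $(d-k)$-subspace containing $U_i$ in the flag case, or any complement of $U_i$ in the direct-sum case. Set $\mathcal{B}_0 = \{(U_i, W_i) : 1 \leq i \leq s\}$. Since $k \neq d - k$ and $G_0$ preserves dimension, any element of $G_0$ fixing the unordered pair $\{U_i, W_i\}$ must fix both subspaces individually; hence the pointwise $G_0$-stabilizer of $\mathcal B_0$ is contained in the pointwise $G_0$-stabilizer of $\{U_1, \ldots, U_s\}$, which is trivial.

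If $G = G_0$, this already gives $b_X(G) \leq d/k + 5$. Otherwise, let $H$ be the pointwise $G$-stabilizer of $\mathcal B_0$; since $H \cap G_0 = 1$ and $[G : G_0] \leq 2$, we have $|H| \leq 2$. Any nontrivial $g \in H$ lies in $G \setminus G_0$ and hence induces the graph automorphism on $G_0$ modulo inner automorphisms, interchanging $k$- and $(d-k)$-subspaces; this forces $g \cdot U_i = W_i$ and $g \cdot W_i = U_i$ for every $i$. To eliminate this potential $g$, add one further pair $(U_{s+1}, W_{s+1}) \in X$ on which $g$ does not act trivially, which exists because the primitive action of $G$ on $X$ is faithful. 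The resulting set has size at most $s + 1 \leq d/k + 6 \leq d/k + 11$, proving the first inequality.

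For the second inequality $d/k + 11 \leq 2\log|G|/\log|X| + 12$, I would estimate $|X| \leq \binom{d}{k}_q \cdot q^{k(d-k)} \leq q^{2k(d-k) + 2k}$ (valid in both the flag and direct-sum cases) together with $|G| \geq |PSL_d(q)| \geq q^{d^2 - d - 1}/(q-1)$. A short calculation then yields $\log|G|/\log|X| \geq d/(2k) - O(1)$, from which the inequality follows with room to spare; any remaining borderline parameter values can be dispatched directly. The heart of the argument is already contained in Theorem \ref{classicalmain}, so no substantial obstacle arises here; the only genuinely new observation is that $H \cap G_0 = 1$ forces $|H| \leq 2$, which is handled by one additional pair.
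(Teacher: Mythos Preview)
Your argument is correct and, for the first inequality, essentially coincides with the paper's: both lift a base for the action on $k$-subspaces to the pair action via the observation that an element of $PSL_d(q)$ fixing an unordered pair $\{U_i,W_i\}$ with $\dim U_i \ne \dim W_i$ must fix each subspace separately, so $b_X(G) \le b_{X_k}(G)$. You over-engineer slightly: in the statement $G = PSL(V)$ is the simple group itself, so there is no outer coset and your graph-automorphism discussion is superfluous (the almost simple extension is dealt with later, in \S\ref{Sec3.3}, by adding three points for the soluble quotient $G/G_0$). With $G = G_0$ your bound $b_X(G) \le d/k + 5$ already suffices and is in fact sharper than the stated $d/k + 11$.

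For the second inequality the paper takes a cleaner route than your direct computation. Rather than bounding $|X|$ and $|G|$ explicitly, it observes that $|X| < |X_k|^2$ (immediate in both the flag and direct-sum cases, since a pair is determined by a $k$-space together with a $(d-k)$-space, and there are at most $|X_k|$ of each) and then invokes Proposition~\ref{prop:ClassicalBound_nperk}, which already gives $\log|G|/\log|X_k| \ge d/k - 1$. Combining, $2\log|G|/\log|X| > \log|G|/\log|X_k| \ge d/k - 1$, and the inequality $d/k + 11 \le 2\log|G|/\log|X| + 12$ drops out with no residual cases to check. Your estimate $|X| \le \binom{d}{k}_q \cdot q^{k(d-k)}$ is valid and leads to the same conclusion, but the paper's two-line argument sidesteps the arithmetic entirely.
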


\begin{proof}
Let $X_k$ be the set of all $k$-dimensional subspaces of $V$. A straightforward computation shows that $|X| < |X_k|^2$. Clearly $b_X(G) \le b_{X_k}(G)$. Now the result follows from  Theorem \ref{classicalmain} and Proposition \ref{prop:ClassicalBound_nperk}.  
\end{proof}

\subsection{Proof of Theorem \ref{generalclassical}}
\label{Sec3.3}

Let $G$ be an almost simple group with socle $G_0$, a classical group on $V$, a vector space of dimension $d$ over $\F_q$. Suppose $G$ acts faithfully and primitively on a set $\O$. 

If the action of $G$ on $\O$ is not a subspace action, then $b(G) \le 5$ by \cite{Burness}. Hence we may assume that the action is a subspace action, so that one of the cases (1), (2), (3) listed after the statement of Theorem \ref{generalclassical} holds. 

In case (1), $\O = U^G$ is an orbit of $G$ on $k$-dimensional subspaces, for some $k$, and we can assume that $k\le d/2$ (by replacing $U$ with $U^\perp$ if necessary, in the case where $G_0 \ne PSL(V)$, and by considering the equivalent action of $G$ on $(d-k)$-spaces, when $G_0 = PSL(V)$). Now Theorem \ref{classicalmain} and Proposition \ref{prop:ClassicalBound_nperk} give
\[
b(G_0) \le \frac{d}{k}+11 \le 2 \frac{\log |G|}{\log |\O|} + 13.
\]
Hence we can choose a set $\mB$ of at most $\frac{d}{k}+11$ points of $\O$ such that $G_{(\mB)} \cap G_0 = 1$, so that $G_{(\mB)}$ is isomorphic to a subgroup of $G/G_0$. This is a soluble group possessing a normal series of length at most 3 with cyclic factor groups. Since the base size of a cyclic linear group is 1, by \cite{SZ}, it follows that 
$b(G) \le 2 \frac{\log |G|}{\log |\O|} + 16$, as required.

Now consider case (2): here $G_0 = PSL(V)$ and $\O = \{U,W\}^G$ where $U,W$ are subspaces of dimensions $k,\,d-k$ and either $U\subseteq W$ or $V = U\oplus W$. In the latter case, if $k = d/2$ then $G_0$ has an element interchanging $U$ and $W$, and $(G,\O)$ is not a subspace action (it is a ${\mathcal C}_2$-action in the terminology of \cite{Burness}). Hence we may assume that $k<d/2$. Now Proposition \ref{pairs} implies that $b(G_0) \le 2 \frac{\log |G|}{\log |\O|} + 12$, and this yields the result as above.

Finally, consider case (3): here $G_0 = Sp_{2m}(q)$, $p=2$ and $M\cap G_0 = O_{2m}^\pm(q)$, where $M$ is a point-stabilizer in $G$. Regarding $G_0$ as the isomorphic orthogonal group $O_{2m+1}(q)$, the set $\O$ is an orbit of $G_0$ on hyperplanes of the natural module $V_{2m+1}(q)$. Hence $b_{\O}(G_0) \le 2m+1$, which is less than 
$2 \frac{\log |G|}{\log |\O|} + 3$, and the conclusion follows again. 

This completes the proof of Theorem \ref{generalclassical}.

\section{Non-affine primitive permutation groups}

 In this section we prove the main Theorem \ref{mainresult} for primitive groups which are not of affine type.

\begin{thm}
\label{nonaffine}
Let $G$ be a primitive permutation group of degree $n$. Assume that $G$ is not of affine type. Then $b(G) \leq 2 (\log |G| / \log n) + 24$. 
\end{thm}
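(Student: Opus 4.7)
The plan is to work through the remaining O'Nan--Scott types for non-affine primitive groups. Since the almost simple case is settled by Theorem~\ref{almostsimple} (with additive constant $16$, giving a slack of $8$ for further overhead), we need to handle the types where the socle $N = T^k$ is non-simple: simple diagonal (SD), compound diagonal (CD), product action (PA), and twisted wreath (TW). In all these cases $k \geq 2$ and $T$ is a nonabelian simple group.

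The easy types are SD, CD, and TW. In type SD, $n = |T|^{k-1}$, while in the compound cases the degree is a product of factors of the same form; in type TW the socle acts regularly so $n = |T|^k$. In each instance $\log|G|/\log n \geq 1$, and classical results on base sizes for diagonal and twisted wreath actions (work of Fawcett, and the bounds recorded in \cite{BGS}) give $b(G) \leq 5$. The theorem holds here with the additive constant $24$ to spare.

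The main case is product action. Here $G \leq L \wr S_k$ acts on $\Omega = \Delta^k$ with $|\Delta| = m \geq 5$ and $L$ almost simple primitive on $\Delta$ with socle $T$. Set $K = G \cap L^k$, so that $G/K$ is a transitive subgroup of $S_k$ and in particular $|G/K| \geq k$. Applying Theorem~\ref{almostsimple} to $L$ gives $b_0 := b_\Delta(L) \leq 2\log|L|/\log m + 16$. The plan is to build a base for $G$ from $b_0 + e$ tuples in $\Delta^k$ whose coordinate projections (i) together contain a base for $L$ in each coordinate (killing the pointwise stabilizer inside $L^k$) and (ii) have pairwise distinct columns, viewed as sequences in $\Delta^{b_0+e}$ (killing the $S_k$-quotient). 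When $k \leq m^{b_0}/|L|$ this works with $e = O(1)$; otherwise a counting argument gives
\[
e \leq \left\lceil \log k / \log m \right\rceil + O(1).
\]
To close the bound, one uses $|\Omega| = m^k$ together with $|G| \leq |L|^k \cdot k!$ and $|G/K| \geq k$, the last of which ensures that $\log|G|/\log n$ exceeds $\log|L|/\log m$ by a term that scales with $\log k / (k\log m)$. Combined with the slack of $8$ between $+16$ and $+24$, this absorbs the distinguishing overhead.

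The main obstacle I foresee is the bookkeeping in the product-action case in the intermediate regime where $k$ is moderately large relative to $m^{b_0}/|L|$: here the overhead $e$ is nontrivial but the corresponding boost to $\log|G|/\log n$ coming from $G/K \leq S_k$ only grows slowly with $k$. Dealing with this cleanly will likely require splitting into two subcases, $k \leq m^{b_0}/|L|$ versus $k > m^{b_0}/|L|$, and using different pieces of the $+8$ slack in each. The other types require nothing beyond invoking known constant bounds, so essentially all the real work lies in the PA case and in verifying that no case-dependent adjustment pushes the additive constant past $24$.
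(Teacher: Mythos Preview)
Your treatment of the diagonal and twisted wreath cases contains a genuine error. You assert that Fawcett's work and \cite{BGS} give $b(G)\le 5$ for simple diagonal, compound diagonal, and twisted wreath types, but this is false. For a diagonal-type group with socle $T^k$ and top group $S_k$, the degree is $n=|T|^{k-1}$ and one already needs about $\log_{|T|}k$ points merely to distinguish the $k$ factors; the base size is unbounded as $k\to\infty$. Fawcett's actual result is $b(G)\le (\log|G|/\log n)+3$, and that is precisely what the paper uses for SD. Compound diagonal (product action over a diagonal component) is likewise not a constant-base case; the paper handles it by a separate argument (\S\ref{Sec4.3.2}) that produces three explicit points reducing the stabilizer to a permutation group on $k\ell$ letters, and then applies the distinguishing-number bound from \cite{DHM}. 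Twisted wreath is dealt with by embedding $G=T^kP$ into a product-type group $L=(T^2)^k.P$ and quoting the product-type bound for $L$; since $\log|L|/\log n=\log|G|/\log n+1$, the product-type constant $+22$ becomes $+24$. That embedding, not any ``easy'' constant bound, is where the $24$ in the theorem actually comes from.

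Your product-action sketch is in the right spirit and matches the paper's strategy, but the bookkeeping you propose would not close the estimate. Bounding the overhead by $\lceil\log k/\log m\rceil$ and then using only $|G/K|\ge k$ is too weak: transitivity of $G/K$ gives you $\log k$ in the numerator, whereas the overhead you need to absorb can be as large as $\log|P|/k$, and there is no direct control of the gap between $\log|L|/\log m$ and $\log|G|/\log n$ from $|G/K|\ge k$ alone. The paper instead uses the distinguishing-number bound $d(P)\le 48\,|P|^{1/k}$ from \cite{DHM} to get $b(G)<\log|P|/\log|\Omega|+b(H)+4$, and the inequality $|\mathrm{Out}(T)|\le|\Gamma|$ from \cite{AG} to compare $|H|^k|P|$ with $|G|$; both ingredients are essential, and neither appears in your outline.
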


According to the O'Nan-Scott theorem (see for example \cite{LPS}), non-affine primitive groups are of the following types: 
almost simple, diagonal type, product type, and twisted wreath type. We shall deal with these types separately in the following subsections.

\subsection{Almost simple groups}

For this case we prove

\begin{thm}
\label{almostsimple}
Let $G$ be an almost simple primitive permutation group of degree $n$. Then $b(G) \leq 2 (\log |G| / \log n) + 16$. 
\end{thm}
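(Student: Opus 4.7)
Given Theorems \ref{generalalternating} and \ref{generalclassical}, the only cases left to handle are those in which the socle $G_0$ of $G$ is either a sporadic simple group or an exceptional group of Lie type. In both of these situations the plan is to appeal to known strong bounds in the literature showing that $b(G)$ is bounded by an absolute small constant, and then observe that the additive term $16$ in the target inequality easily absorbs this.

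More precisely, I would first invoke the result of Burness, O'Brien and Wilson (proved via direct computation in the ATLAS groups and their standard actions) which shows that every faithful primitive almost simple group whose socle is a sporadic simple group (including the Tits group) has base size at most $7$. For the exceptional groups of Lie type, the Cameron--Kantor conjecture was settled by Burness, Liebeck and Shalev, who proved that every almost simple primitive group with socle an exceptional group of Lie type, acting in any faithful primitive action, has base size at most $6$. (Note that exceptional groups have no natural analogue of the ``subspace actions'' that caused trouble for the classical case, so no separate treatment of a reducible family is needed.) Combining these two inputs, we obtain $b(G) \le 7$ in all remaining cases.

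To finish, I would use the trivial fact that a faithful primitive permutation group of degree $n$ satisfies $n \le |G|$, which gives $\log|G|/\log n \ge 1$. Therefore
\[
2\,\frac{\log|G|}{\log n} + 16 \;\ge\; 18 \;\ge\; 7 \;\ge\; b(G),
\]
so the desired inequality holds comfortably. Combined with Theorems \ref{generalalternating} and \ref{generalclassical}, this completes the proof.

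The only real obstacle here is citing the right references precisely: the bound $b(G) \leq 7$ for sporadic groups depends on the classification of maximal subgroups of the sporadic groups together with computer calculations, and the bound $b(G) \leq 6$ for exceptional Lie type groups relies on delicate fixed-point-ratio estimates of Liebeck--Shalev together with uniform analysis in \cite{Burness} and \cite{BurnessLiebeckShalev}. No new combinatorial or group-theoretic argument is required beyond quoting these prior results and observing that the additive slack of $16$ trivially accommodates a bounded base size.
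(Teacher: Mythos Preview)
Your proposal is correct and follows essentially the same approach as the paper: reduce to the exceptional and sporadic socle cases via Theorems \ref{generalalternating} and \ref{generalclassical}, then cite \cite{BurnessLiebeckShalev} and \cite{BurnessObrienWilson} to get $b(G)\le 7$, which is absorbed by the additive constant. The paper's version is simply terser, omitting the explicit remark that $\log|G|/\log n \ge 1$.
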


\begin{proof}
Theorems \ref{generalalternating} and \ref{generalclassical} give the result when the socle of $G$ is an alternating or classical group. For the remaining cases, the socle of $G$ is a group of exceptional Lie type or a sporadic group. In these cases we have $b(G) \leq 7$ by \cite{BurnessLiebeckShalev} and \cite{BurnessObrienWilson}. 
\end{proof}

\subsection{Diagonal type groups}
\label{Sec4.2}

Work of Fawcett \cite{Fawcett} (and also Gluck, Seress, Shalev \cite[Remark 4.3]{GSS}) implies that, in the diagonal type case, we have 
\begin{equation}
\label{equat}
b(G) \leq (\log |G| / \log n) + 3.
\end{equation}

\subsection{Product type groups}

Bases for primitive groups of product type were studied by Burness and Seress in \cite{BS}. 
We will use their notation. Let $\Omega
= \Gamma^k$ for some set $\Gamma$ and integer $k \geq 2$. There exists
a primitive group $H \leq \mathrm{Sym}(\Gamma)$ of almost simple type or of
diagonal type such that the following holds. Let the socle of $H$ be
$T$. Let $P$ be the (transitive) action of $G$ on the set of the $k$
direct factors of $\mathrm{Soc}(G) = T^k$. We have $T^{k} \leq G \leq H \wr
P$.  

We recall two definitions. A {\it distinguishing partition} for a finite group $X$ acting on a finite set $\Sigma$ is a coloring of the
points of $\Sigma$ in such a way that every element of $X$ fixing this
coloring is contained in the kernel of the action of $X$ on
$\Sigma$. The minimal number of parts (or colors) of a distinguishing
partition is called the {\it distinguishing number} of $X$ and is denoted by
$d(X)$. 

Let $d(P)$ be the distinguishing number of the transitive permutation group $P$. By \cite[Theorem 1.2]{DHM}, we have $d(P) \leq 48 \sqrt[k]{|P|}$.


\subsubsection{The case when $H$ is almost simple}
\label{Sec4.3.1}

Assume that $H \leq \mathrm{Sym}(\Gamma)$ is an almost
simple group with socle $T$. We follow not only \cite{BS} here but \cite[\S4]{DHM}. 
However we avoid the use of the bound $|\mathrm{Out}(T)| \leq {|T|}^{\alpha}$, since this is expensive. Instead we use the estimate $|\mathrm{Out}(T)| \leq |\Gamma|$ found in \cite[Lemma 2.7]{AG}. Thus $|G| \geq
{|T|}^{k}|P| \geq ({|H|}^{k}|P|)/{|\Gamma|}^{k}$. This gives 
$\log({|H|}^{k}|P|)/\log |\Omega| \leq (\log |G| / \log |\Omega|) + 1$.

By using the idea of \cite[Lemma 3.8]{BS} combined with Lemma 2.1 of \cite{DHM}, we see that 
\begin{equation}
\label{e11}
b(G) < \frac{\log d(P)}{\log |\Gamma|} + 1 + b(H) < \frac{\log |P|}{\log |\Omega|} + b(H) + 4,
\end{equation}
 since $|\Gamma| \geq 5$. By Theorem \ref{almostsimple}, this gives $$b(G) < \frac{\log |P|}{\log |\Omega|} + 2 \frac{\log |H|}{\log |\Gamma|} + 20 <
2  \frac{\log ({|H|}^{k}|P|)}{\log |\Omega|} + 20 \leq 2 \frac{\log |G|}{\log |\Omega|} + 22.$$

\subsubsection{The case when $H$ is of diagonal type} 
\label{Sec4.3.2}

Now assume that $H$ is of diagonal type. Here $\mathrm{Soc} (H) = T =
S^{\ell}$, where $S$ is a non-abelian simple group and $\ell \geq
2$. We have $S^{\ell} \leq H \leq S^{\ell}.(\mathrm{Out}(S) \times Q)$
where $Q \leq \mathrm{Sym} (\ell)$ is the permutation group induced by the
conjugation action of $H$ on the $\ell$ factors of $S^{\ell}$. 

The set $\Gamma$ can be thought of as the set of right cosets in $H$ of the subgroup $H_{0} = (D \times Q) \cap H$ where $D$ denotes the diagonal subgroup of ${\mathrm{Aut}(S)}^{\ell}$. In particular, $|\Gamma| = {|S|}^{\ell-1}$. By (\ref{equat}), $b(H) \leq (\log|H|/\log|\Gamma|) + 3 \leq 8$, provided that $\ell \leq |S|$. Thus, in view of (\ref{e11}), we may assume that $\ell \geq 3$.  

Let $\mathcal{C}$ be the set of complete representatives of the right cosets in $H$ of the subgroup $H_{0}$ consisting of the elements of $S^{\ell}$ where the first coordinate is $1$. Let $s_1$ and $s_2$ be elements of $S$ such that they together generate $S$. Let $\gamma_{0}$, $\gamma_{1}$, $\gamma_{2} \in \mathcal{C}$ be those elements for which every coordinate of $\gamma_{0}$ is $1$, all but the first coordinate of $\gamma_{1}$ is $s_{1}$, and all but the first coordinate of $\gamma_{2}$ is $s_{2}$. Consider the pointwise stabilizer $Q_0$ of $\{ H_{0}\gamma_{0}, H_{0}\gamma_{1}, H_{0}\gamma_{2} \}$. This group $Q_0$ is contained in the stabilizer $H_0$ of $H_{0}\gamma_{0}$. For any element $h_{0} \in H_{0}$ and any index $i$ in $\{1, 2\}$, we have $H_{0}\gamma_{i}h_{0} = H_{0}\gamma$ for some $\gamma \in \mathcal{C}$ with $1$ or $\ell -1 > 1$ entries equal to $1$. Moreover if $h_{0}$ is in $Q_{0}$, then the first case must hold. Since the only automorphism of $S$ fixing both $s_{1}$ and $s_{2}$ is the identity, we see that $Q_{0}$ is a subgroup of $Q$ leaving $\mathcal C$ invariant. Therefore, whenever $q_0\in Q_0$ and $\gamma\in \mathcal C$, we have $(H_0\gamma)q_0=H_0\gamma^{q_0}$ for $\gamma^{q_0}\in \mathcal C$.

Let $\omega_{0}$, $\omega_{1}$, $\omega_{2}$ be those elements of $\Omega$ for which all $k$ coordinates of $\Omega$ are $H_{0} \gamma_{0}$, $H_{0} \gamma_{1}$, $H_{0} \gamma_{2}$, respectively. By the previous paragraph and the fact that $G \leq H \wr P$, the pointwise stabilizer in $G$ of $\{ \omega_{0}, \omega_{1}, \omega_{2} \}$ is a permutation group $R$ permuting the $k \ell$ coordinates of the vectors in $S^{k \ell}$. More precisely, if the coordinates are labelled by the integers $1, \ldots , k \ell$, then $R$ is a permutation group on $\{ 1, \ldots, k \ell \}$ such that $\{ j \ell + 1 \mid j \in \{0, \ldots , k-1 \} \}$ is $R$-invariant. Since $R$ is a subgroup of a transitive group on $k \ell$ points which has order at most $|G|$, we see, by \cite[Theorem 1.2]{DHM}, that $d(R) \leq 48 \sqrt[k \ell]{|G|}$. 

Consider a distinguishing partition $\mathcal{P}$ with $d(R)$ colors for the action of $R$ on $\{ 1, \ldots, k \ell \}$. Define a new coloring of the $R$-invariant subset $\{ 1, \ldots, k \ell \} \setminus \{ j \ell + 1 \mid j \in \{0, \ldots , k-1 \} \}$ using no more than $d(R)^{2}$ colors in the following way. For any integers $j$ and $u$ with $0 \leq j \leq k-1$ and $1 < u \leq \ell$ color $j \ell + u$ with the color $(\alpha, \beta)$ where $\alpha$ is the color of $j \ell + 1$ in $\mathcal{P}$ and $\beta$ is the color of $j \ell + u$ in $\mathcal{P}$. Clearly, no non-identity element of $R$ preserves this new coloring. For $\ell \geq 3$, we see, by Lemma 2.1 of \cite{DHM}, that $G$ has a base $B$ containing $\{ \omega_{0}, \omega_{1}, \omega_{2} \}$ such that
$$b(G) \leq |B| = 2 \frac{\log d(R)}{\log |S|} + 4 < 2 \frac{\log |G|}{k \ell \log |S|} + 6 = 2 \frac{\log |G|}{\log n}  + 6.$$

\subsection{Twisted wreath product type groups} 
\label{Sec4.4}

This type was treated in Burness and Seress \cite[Section 4]{BS}. We follow their discussion. By the previous section we know that if $L$ is a primitive permutation group of product type acting on a set $\Omega$, then we have $b(L) \leq 2 (\log |L| / \log |\Omega|) + 22$. Let $G$ be a primitive permutation group of twisted wreath product type acting on the set $\Omega$. Then $G$ contains a regular normal subgroup $T^k$ isomorphic to the direct product of $k$ copies of a non-abelian simple group $T$. We may write $G = T^{k} P$ where $P$ is a transitive permutation group acting on $k$ points. As explained in \cite[Section 3.6]{P}, we may embed $G$ in a group of product type $L$ which is of the form ${(T^{2})}^{k}.P$. Thus $b(G) \leq b(L) \leq 2 (\log |L| / \log |\Omega|) + 22 = 2 (\log |G| / \log |\Omega|) + 24$. 
 
\vspace{4mm}
This completes the proof of Theorem \ref{nonaffine}.

\section{Affine primitive permutation groups}

The main result of this section is

\begin{thm} 
\label{generalaffine}
Let $G$ be an affine primitive permutation group of degree $n$. Then $b(G) \leq 2 (\log |G| / \log n) + 16$.
\end{thm}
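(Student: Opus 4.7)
The plan begins with the standard affine reduction. Let $V$ denote the unique minimal normal subgroup of $G$, an elementary abelian $p$-group of order $n = p^d$ acting regularly on $\Omega$, so that $G = V \rtimes G_0$ with $G_0 \le GL(V)$ acting irreducibly. Choosing the zero vector of $V$ as the first base point reduces the problem to finding a set of vectors in $V$ with trivial common stabilizer in $G_0$: writing $b_V(G_0)$ for the smallest size of such a set, we have
$$b(G) \le 1 + b_V(G_0).$$
Since $\log|G|/\log n = 1 + \log|G_0|/\log|V|$, the desired inequality $b(G) \le 2\log|G|/\log n + 16$ is equivalent to the linear statement
$$b_V(G_0) \le 2\,\frac{\log|G_0|}{\log|V|} + 17.$$

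To establish this, I would proceed by an Aschbacher-style structural analysis of $G_0 \le GL(V)$, inducting on $\dim V$. When $G_0$ preserves a nontrivial direct-sum decomposition $V = V_1 \oplus \cdots \oplus V_t$ and permutes the summands via a transitive group $P$, the group $G_0$ embeds into $H \wr P$ with $H$ acting linearly on $V_1$; a base is then built by combining an inductive base for $H$ on $V_1$ with extra vectors chosen to distinguish the action on the blocks, in direct analogy with the product-type argument of Section 4.3 and using the distinguishing-number bound $d(P) \le 48\sqrt[t]{|P|}$ from \cite{DHM}. The cases where $G_0$ preserves a tensor factorisation $V = V_1 \otimes V_2$ or a tensor-induced decomposition admit analogous wreath-product reductions to the factor representations.

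When $G_0$ is primitive and tensor-indecomposable on $V$, one invokes the bounds on orders of primitive linear groups due to Liebeck: either $|G_0| \le q^{3d}$, in which case $\log|G_0|/\log|V| \le 3$ and it is enough to exhibit an absolute base of size at most $23$, which can be produced from the preserved geometric structure (extraspecial normaliser, classical form, and so on); or $G_0$ is almost quasisimple in one of the known bounded representations of its simple factor, where Theorems \ref{generalalternating}, \ref{generalclassical} and \ref{almostsimple} of this paper, applied to $G_0/Z(G_0)$, yield the required estimate after accounting for the centre.

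The main obstacle is uniformly controlling the additive constant $17$ across all Aschbacher cases. Each structural reduction in the imprimitive, tensor and tensor-induced cases contributes its own additive overhead, so the induction has to be arranged so that the innermost base case is treated sharply enough to prevent the constant from blowing up; in particular, one needs the sharp leading constant $2$ (with very small additive error) to hold both in the coprime setting and in the modular setting for small primitive linear groups. Tracking how these additive contributions compose through the successive reductions, and verifying that every Aschbacher class exits with constant at most $17$, is the delicate bookkeeping part of the proof.
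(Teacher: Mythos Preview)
Your reduction from the affine group $G$ to the irreducible linear group $G_0 \le GL(V)$ is correct and is exactly what the paper does (this is Theorem~\ref{ez}). Your treatment of the imprimitive linear case is also broadly in line with the paper's Section~5.2, which likewise uses the direct-sum decomposition together with the distinguishing-number machinery from \cite{DHM}.

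The genuine gap is in your primitive linear case. Theorems~\ref{generalalternating}, \ref{generalclassical} and \ref{almostsimple} are statements about base sizes of almost simple groups in \emph{primitive permutation actions on coset spaces}; they say nothing directly about the base size of a quasisimple group acting linearly on a module $V$. The action of $G_0$ (or of $G_0/Z(G_0)$) on $V$ is not in general a primitive permutation action, so you cannot feed it into those theorems. Concretely, for $G_0 = Sp_d(q)$ on its natural module one needs $b_V(G_0) = d$ (Proposition~\ref{sp}), and this is not a consequence of Theorem~\ref{generalclassical}, which concerns subspace actions. Similarly, in the ``small order'' branch $|G_0|\le q^{3d}$, nothing in your outline actually produces a base of bounded size; small order does not by itself bound $b_V(G_0)$.

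The paper handles the primitive linear case by an entirely different route (Theorem~\ref{main}): a structure theorem from \cite{LSbase2} (Proposition~\ref{corr}) shows that if $b^*(H^0)>14$ then $H^0$ lies in a tensor product $H_0\otimes\bigotimes \mathrm{Sym}(m_i)\otimes\bigotimes Cl_{d_i}(q_i)$ with each factor on its natural module; one then needs explicit linear base-size bounds for these natural modules (Proposition~\ref{sub}) together with a tensor-product inequality $b(H_1\otimes H_2)\le b^*(H_2)/n_1+3$ (Lemma~\ref{33}), and an induction on the number of tensor factors (Proposition~\ref{basest}). The constants $15$ and $9$ in Theorem~\ref{main}, and hence the $17$ in Theorem~\ref{ez}, come from making each of these ingredients explicit. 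Your sketch does not contain substitutes for Propositions~\ref{sub} and~\ref{basest} or Lemma~\ref{33}, and the proposed appeal to the almost simple permutation theorems cannot replace them.
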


Let $G$ be an affine primitive permutation group of degree $n$ with a point-stabilizer $H$. Then $G = VH \le AGL(V)$, where $V$ is a finite vector space of order $n = p^k$ ($p$ prime), and the stabilizer $H$ of the zero vector is an irreducible subgroup of $GL(V)$. Since $b(G) = b(H)+1$, Theorem \ref{generalaffine} follows immediately from 

\begin{thm}
\label{ez}
Let $H$ be a subgroup of $GL(V)$ acting irreducibly on the finite vector space $V$. Then $b_V(H) \leq 2 (\log |H| / \log |V|) + 17$.
\end{thm}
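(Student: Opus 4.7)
The plan is to proceed by induction on $\dim V$, using Aschbacher-type structural reductions to simpler irreducible linear groups. The target inequality supplies roughly $17$ units of additive slack to distribute across the reduction steps, which dictates the shape of the argument.

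First I would dispose of the imprimitive case. Suppose $H$ preserves a nontrivial decomposition $V = V_1 \oplus \cdots \oplus V_t$ with $t \geq 2$. Let $K \leq GL(V_1)$ be the group induced on $V_1$ by $\mathrm{Stab}_H(V_1)$, and let $P \leq \mathrm{Sym}(t)$ be the transitive permutation group induced on the blocks, so $H \leq K \wr P$. I would build a base in two stages, much as in Section~\ref{Sec4.3.1}: take a base $\mB_1 \subset V_1$ for $K$ (whose size is controlled by the induction hypothesis applied to the irreducible group $K$ on $V_1$) and place copies of it on several blocks, then append a small number of vectors encoding a distinguishing coloring of $P$. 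Using the bound $d(P) \leq 48 |P|^{1/t}$ from \cite{DHM}, together with the product inequality $|H| \leq |K|^t |P|$ and the inductive hypothesis, one recovers the claimed bound on $b_V(H)$.

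Next, for $H$ primitive but preserving a tensor decomposition $V = V_1 \otimes V_2$ with $\dim V_i \geq 2$, a similar construction reduces the problem to bases for the two induced actions on $V_1$ and $V_2$. Field-extension subgroups, where $H$ embeds into $\Gamma L_{d/e}(q^e)$, fit into the same framework via the irreducible action on a smaller vector space over a larger field, and likewise subfield subgroups.

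This leaves the core case where $H$ is primitive and tensor-indecomposable. By Aschbacher's theorem, either $F^*(H)$ is a quasisimple group $L$ acting absolutely irreducibly on $V$, or $F^*(H)$ is an extraspecial or symplectic-type $r$-group whose normalizer has order bounded polynomially in $|V|$. In the quasisimple case, either $L$ is a classical group with $V$ its natural module (up to Frobenius twists), and Theorem~\ref{generalclassical} applied to the almost simple quotient $H/Z(H)$ yields the bound -- at the small cost of inserting a bounded number of extra base points to kill scalars modulo $Z(H)$ -- or $L$ acts on a non-natural module and the absolute bounds $b_V(H) \leq 7$ from \cite{Burness} and \cite{BurnessLiebeckShalev} apply. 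The extraspecial/symplectic case is handled by the small order of $H$ together with known base-size bounds for solvable (or near-solvable) irreducible linear groups. The principal obstacle is bookkeeping: each of the four cases must consume only a small part of the additive budget of $17$, so the imprimitive and tensor-product reductions must avoid wasteful duplication of base points, and the constant in the distinguishing-number inequality $d(P) \leq 48|P|^{1/t}$ must be deployed sharply to prevent accumulation of overhead across the inductive layers.
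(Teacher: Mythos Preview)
Your outline has the right ingredients but the inductive scheme does not close. In the imprimitive step you apply the \emph{full} inductive hypothesis to the block group $K$ on $V_1$, obtaining $b(K)\le 2\log|K|/\log|V_1|+17$, and then add the wreath-product overhead coming from the distinguishing colouring of $P$. Any such overhead, even a single extra base point, pushes the final constant above $17$; since imprimitive decompositions can nest, an honest induction on $\dim V$ would let the additive constant grow without bound. The paper avoids this by \emph{not} using induction here: one takes $t$ maximal so that $K_1$ on $V_1$ is primitive, and then invokes a separately proved, strictly sharper bound for the primitive case (Theorem~\ref{main}: either $b(K_1)\le 15$ or $b(K_1)\le 2\log|K_1|/\log|V_1|+9$). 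The gap between $9$ and $17$ is precisely the budget spent on the imprimitive overhead, and the case $b(K_1)\ge 16$ requires the rather delicate $\pmod{T_V}$-representation machinery of \cite{DHM} (alternating-induced vs.\ classical-induced, linking factor $r\le 2$), not just the simple wreath-product colouring argument you describe.

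Your primitive analysis is also too loose to deliver the needed constant $9$. The tensor reduction is not a matter of ``reducing to bases on $V_1$ and $V_2$'': one needs the quantitative Lemma~\ref{33}, $b(H_1\otimes H_2)\le b^*(H_2)/n_1+3$, together with the structural Proposition~\ref{corr} identifying the large tensor factors as natural alternating or classical modules, and the careful arithmetic of Proposition~\ref{basest}. The extraspecial case is not handled by ``small order'' or near-solvable bounds (the quotient $Sp_{2m}(s)$ is far from solvable and can be large); the paper uses a counting argument via \cite{GS} to get $b\le 13$ (Proposition~\ref{fitt}). Finally, invoking Theorem~\ref{generalclassical} for the natural classical module is a category error: that theorem bounds base sizes of permutation actions on coset spaces, whereas here one needs the linear action on $V$ itself, which is covered instead by Proposition~\ref{sub}.
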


In the above theorems, the multiplicative constant $2$ is best possible, as shown by the following example (which completes the proof of Proposition \ref{acc}). 

\begin{prop}
\label{sp}
Let $V$ be a $d$-dimensional ($d$ even) non-degenerate symplectic space over 
the finite field $\FF q$ and let $H= Sp(V)$ with its natural 
action on $V$. 
\begin{itemize}
\item[{\rm (i)}] Then $b_V(H)=d$. 
\item[{\rm (ii)}] If $G = VH \le AGL(V)$ is the corresponding affine primitive permutation group, then for sufficiently large values of $q$, we have 
\[
b(G) = \lfloor 2 (\log |G|/\log n) \rceil - 2.
\]
\end{itemize}
\end{prop}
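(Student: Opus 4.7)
For part (i), I would establish the two inequalities separately. The upper bound $b_V(H)\le d$ is immediate: any basis of $V$ is a base, since a linear map fixing a basis pointwise is the identity. For the lower bound, take any $\mathcal{B}\subseteq V$ with $|\mathcal{B}|\le d-1$, let $W = \langle \mathcal{B}\rangle$, so $\dim W \le d-1$ and hence $W^\perp\ne 0$. For a nonzero $u \in W^\perp$, the symplectic transvection $t_u\colon x\mapsto x+\langle x,u\rangle u$ lies in $Sp(V)$, is nontrivial (because the form is non-degenerate, so some $x$ has $\langle x,u\rangle\ne 0$), and fixes every vector of $W$ since $\langle w,u\rangle=0$ for $w\in W$. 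Hence the pointwise stabilizer of $\mathcal{B}$ in $H$ is nontrivial, giving $b_V(H)\ge d$.

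For part (ii), I would combine (i) with the standard identity $b(G) = b_V(H)+1$ for an affine primitive group $G = V\rtimes H$: one base point pins down the origin, after which the remaining base points form a base for $H$ acting on $V$. This yields $b(G) = d+1$. Using the classical formula
\[
|Sp_d(q)| = q^{d^2/4}\prod_{i=1}^{d/2}(q^{2i}-1),
\]
the leading term of $|Sp_d(q)|$ is $q^{d(d+1)/2}$, so $|G| = q^d|Sp_d(q)|$ has leading term $q^{d(d+3)/2}$, and with $n = q^d$ this gives
\[
\frac{2\log|G|}{\log n} = d+3 + O\!\left(\frac{1}{\log q}\right) \qquad (q\to\infty).
\]
For $q$ large the right-hand side lies within $1/2$ of $d+3$, its nearest integer is $d+3$, and subtracting $2$ yields $d+1=b(G)$, as required. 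There is no serious obstacle: the only delicate point is verifying that the $O(1/\log q)$ error term is strictly smaller than $1/2$ for $q$ large, which is immediate from the fact that $d$ is fixed and $\prod_{i=1}^{d/2}(1-q^{-2i})\to 1$ as $q\to\infty$.
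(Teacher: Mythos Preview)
Your proof is correct and follows essentially the same approach as the paper. For part (i) the paper extends the span of $\mathcal{B}$ to a hyperplane $U$, takes $\langle x\rangle = U^\perp$, and verifies by hand that the map $A:cy+u\mapsto c(y+x)+u$ lies in $Sp(V)$ and fixes $U$ pointwise; your use of the standard symplectic transvection $t_u$ with $u\in W^\perp$ is a slightly cleaner packaging of the same idea (and avoids the auxiliary step of passing to a hyperplane), while part (ii) is handled identically in both as a routine asymptotic computation from the order formula.
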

\begin{proof}
(i) Clearly, any basis of $V$ (as a vector space) is also a
base for $H$, so $b_V(H)\leq d$. For the equality, let 
$\{b_1,\ldots,b_l\}\in V$ be any set of vectors with $l\leq d-1$. 
Then there is a subspace $U\leq V$ containing $\{b_1,\ldots,b_l\}$ with 
$\dim U=d-1$. Hence it is enough to show that for every such subspace $U$, 
there exists  a non-identity $g\in H$ that acts trivially on $U$. 

Let $U\leq V$ be a subspace of dimension $d-1$ and let $[\ ,\ ]$
denote the non-degenerate symplectic bilinear form on $V$ preserved by $H$. Then
the restriction of $[\ ,\ ]$ to $U$ is degenerate: there exists 
$0\neq x\in U$ such that $\langle x\rangle=U^\perp$. Let $y\in
V\setminus U$ be arbitrary. We claim that the map
\[
A:cy+u\mapsto c(y+x)+u\ (c\in\FF q,\,u\in U)
\]
 is an element of $H$, which acts 
trivially on $U$. To see this, let $c,d\in \FF q$ and $u,v\in U$. Then we have
$[A(cy+u),A(dy+v)]=[cy+u+cx,dy+v+dx]=[cy+u,dy+v]$, proving the claim.

(ii) This follows from a simple computation using the order formula for $|Sp(V)|$.
\end{proof}

It remains to prove Theorem \ref{ez}. We do this in the following two subsections.

\subsection{Primitive linear groups}

In this subsection we prove Theorem \ref{ez} in the case where $H\le GL(V)$ acts primitively on $V$ as a linear group. In fact we prove the following stronger bound for this case. 

\begin{thm}\label{main}
Let $V$ be a finite vector space, and let $H \le GL(V)$ be an irreducible, primitive linear group on $V$. Then one of the following holds:
\begin{itemize}
\item[{\rm (i)}] $b(H) \leq 15$;
\item[{\rm (ii)}] $b(H) \le 2\,\frac{\log |H|}{\log |V|} + 9$.
\end{itemize}
\end{thm}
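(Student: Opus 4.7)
The plan is to invoke the Aschbacher--Suprunenko structure theorem for irreducible primitive linear groups $H\le GL(V)$. After replacing $V$ by its realization over the centralizer field $\mathrm{End}_H(V)$ (so that $H$ acts absolutely irreducibly, without changing $b(H)$, $|H|$ or $|V|$ up to controlled factors), the theorem places $H$ into one of two mutually exclusive families: \emph{Family A}, where $F^*(H)$ is a central product of the scalars with an extraspecial $r$-group of order $r^{1+2m}$, $|V|=r^m$, and $H/F^*(H)$ embeds in $Sp_{2m}(r)$ (the Aschbacher class $\mathcal C_6$); or \emph{Family B}, where $F^*(H)$ contains a quasisimple subgroup $L$ acting absolutely irreducibly on $V$.

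In Family A the structure is rigid: a regular-orbit argument on generic vectors in $V$, together with a finite computer check for the small $(r,m)$ where the asymptotic bound is too weak, yields $b(H)\le 15$, giving conclusion (i). In Family B I would split further according to whether the $L$-module $V$ is, up to duals, Frobenius twists and subfield realizations, a natural module of $L$ as a classical group. If it is not, then the induced action on $V$ is a non-standard almost simple action in the sense of \cite{Burness}, so the bounds of \cite{Burness}, \cite{BurnessLiebeckShalev} and \cite{BurnessObrienWilson} give $b(H)\le 7$, which is stronger than (i). If $V$ is the natural module of a classical $L\in\{SL_d(q_0), Sp_d(q_0), SU_d(q_0), \Omega_d^\epsilon(q_0)\}$, then I would construct an explicit base of size at most $d+c_0$ for a small absolute constant $c_0$, generalizing Proposition~\ref{sp}(i) and adapting the subspace-base constructions of Section~\ref{classicalorbit} to the affine setting (choose a near-basis of $V$ plus a bounded number of extra vectors to kill the diagonal/field/graph automorphisms in $H/LZ$). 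Since $\log|L|/\log|V|\ge (d-1)/2$ for every classical $L$ on its natural module, rearrangement then yields $b(H)\le 2\log|H|/\log|V|+9$, giving (ii).

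The main obstacle is the bookkeeping in the natural-classical subcase of Family B: one must treat the outer automorphism quotient $H/LZ$ uniformly (field, graph and diagonal automorphisms each may force a small number of additional base vectors), handle small-dimensional orthogonal and unitary exceptions in which the ratio $\log|L|/\log|V|$ dips below the generic $(d-1)/2$, and account for subfield realizations of $V$, where $\dim V$ and $\log|H|/\log|V|$ rescale in compensating ways that must be checked to preserve the additive constant $9$. A secondary technical task is the finite verification in Family A to confirm $b(H)\le 15$ in the borderline small cases (e.g.\ $r,m$ both small) where the regular-orbit argument alone is insufficient.
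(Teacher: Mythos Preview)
Your dichotomy into Families A and B is not exhaustive, and this is a genuine gap. A primitive irreducible $H\le GL(V)$ need not have either an absolutely irreducible extraspecial Fitting subgroup or a quasisimple normal subgroup $L$ that is absolutely irreducible on $V$. The generalized Fitting subgroup $F^*(H)$ can be a central product of \emph{several} quasisimple components (possibly together with an extraspecial factor), each of which is absolutely irreducible only on a tensor factor of $V$, not on $V$ itself. For instance, $H$ may contain $\mathrm{Cl}_{d_1}(q_1)\otimes \mathrm{Cl}_{d_2}(q_2)\otimes\cdots$ acting on the tensor product of the natural modules; here no single quasisimple $L\trianglelefteq H$ is irreducible on $V$, so your Family~B argument cannot begin, and there is no extraspecial normal subgroup to put you in Family~A either. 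These tensor-product cases are exactly the ones where $b(H)$ can be large, so they cannot be set aside.

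The paper's proof is built around this phenomenon. Proposition~\ref{corr} shows that once $b^*(H^0)>14$ one has $H^0\le H_0\otimes\bigotimes_i\mathrm{Sym}(m_i)\otimes\bigotimes_j\mathrm{Cl}_{d_j}(q_j)$ with possibly many large factors, and Proposition~\ref{basest} then bounds $b(H^0)$ by induction on the number of factors via the tensor-product lemma (Lemma~\ref{33}), which controls $b(H_1\otimes H_2)$ in terms of $b^*(H_2)$ and the smaller dimension. This inductive tensor machinery is the core of the argument and has no counterpart in your outline. A secondary issue: in your Family~B non-natural subcase you appeal to \cite{Burness}, \cite{BurnessLiebeckShalev}, \cite{BurnessObrienWilson}, but those papers bound base sizes of almost simple groups in \emph{permutation} actions on coset spaces, not of linear groups on vectors; the relevant input here is Proposition~\ref{quasi} (proved in \cite{LL}), which is a different result in a different setting.
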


A version of Theorem \ref{main} was proved in \cite{LSbase, LSbase2} with a much worse multiplicative constant, and  unspecified constants in place of the constants 15 and 9.  
The proof of Theorem \ref{main} will be along the lines of that proof. However, in order to make our constants explicit 
(and small), we need to improve several of the results in \cite{LSbase, LSbase2}. 

For a field $\F_q$ and a positive integer $m$, by the {\it natural} module for the symmetric or alternating group $\mathrm{Sym}(m)$ or $\mathrm{Alt}(m)$ over $\F_q$, we mean the fully deleted permutation module of dimension $m' = m-\d$, where $\d \in \{1,2\}$.

The first result is a version of Proposition 2.2 of \cite{LSbase} with an explicit constant:

\begin{prop} \label{quasi}
     Let $V = V_d(q)$ $(q=p^e)$ and $G \leq GL(V)$, and suppose that $E(G)$ is 
     quasisimple and absolutely irreducible on $V$. Then one of the following 
     holds: 
\begin{itemize}
\item[{\rm (i)}] $E(G) = \mathrm{Alt}(m)$ and $V$ is the natural $\mathrm{Alt}(m)$-module 
     over $\F_q$, of dimension $d = m-\d$ ($\d \in \{1,2\}$); 
\item[{\rm (ii)}] $E(G) = Cl_d(q_0)$, a classical group  with natural module 
of dimension $d$ over a subfield $\F_{q_0}$ of $\F_q$;
\item[{\rm (iii)}] $b(G)\le 6$. 
\end{itemize}
\end{prop}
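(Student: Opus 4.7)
The plan is to prove Proposition \ref{quasi} by a Classification-based case analysis on the simple quotient $S := E(G)/Z(E(G))$, following the strategy of \cite[Prop.~2.2]{LSbase} but tracking constants explicitly so as to replace the unspecified absolute constant there by $b(G) \le 6$ in case (iii). The guiding principle is that outside the genuinely natural modules listed in (i) and (ii), the dimension $d$ is forced to be large relative to $\log|E(G)|$ by classical low-degree bounds, so the elementary inequality $\log|G|/\log|V| \le b(G)$ together with a small upper bound on $b(G)$ from existing almost simple base-size results will suffice.

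First I would dispose of the sporadic and exceptional Lie-type socles. For these, the almost simple base-size bounds of \cite{BurnessObrienWilson} and \cite{BurnessLiebeckShalev} give base size at most $7$ for the primitive projective action of $G Z(GL(V))/Z(GL(V))$ on $1$-spaces of $V$; this transfers to the linear action on $V$ at the cost of at most one auxiliary vector, killing the (cyclic) scalar subgroup of $G$. The resulting bound is sharpened to $6$ by inspecting a short finite list of low-degree representations, e.g.\ by GAP computation in the style of Section 2. Next, for $S=\mathrm{Alt}(m)$, either $V$ is the fully deleted permutation module of dimension $m-\delta$, placing us in case (i), or $V$ is a non-natural irreducible module. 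In the latter subcase, James's bound in defining characteristic and Wagner's bound in cross characteristic force $\dim V$ to grow at least quadratically in $m$, which makes $\log|G|/\log|V|$ small; a base of size at most $6$ is then obtained either by invoking \cite{BGS}, or for the finitely many small residual values of $m$ by direct computation.

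The classical case is the most delicate. Suppose $S$ is a simple classical group with natural module $V_0$ of dimension $d_0$ over a subfield $\FF{q_0}$ of $\FF q$. If $V$ is isomorphic as an $E(G)$-module to $V_0$, to its dual, or to a Frobenius twist of either (all of which lie in case (ii)), we are done. Otherwise $V$ is a non-natural absolutely irreducible $E(G)$-module; the Landazuri--Seitz lower bounds then force $\dim V$ to exceed $d_0$ by a substantial margin, and crucially the induced primitive projective action of $G$ on $1$-spaces of $V$ is a \emph{non-subspace} action in the terminology of Section \ref{Section3}. Burness's theorem \cite{Burness} yields base size at most $5$ for that projective action, which lifts to $b_V(G) \le 6$ after accounting for the scalar subgroup. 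The hardest part will be the bookkeeping needed to verify the constant $6$ on the residual finite list of low-rank, small-field, twisted or spin exceptions --- symmetric-square and exterior-square modules in low dimension, spin and half-spin modules of orthogonal groups of rank at most $6$, twisted forms over small subfields, and similar ``near-natural'' representations --- where the Landazuri--Seitz dimension gap is close to tight and case-by-case inspection (or a computer check in the spirit of the GAP verifications in Section 2) will be required to rule out $b(G) = 7$ or $8$.
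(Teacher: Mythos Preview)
The paper does not prove this proposition at all: its entire proof is the sentence ``This is proved in \cite{LL}.'' So there is no argument in the paper against which to compare yours, only a deferred citation to the Lee--Liebeck preprint. Your outline is in the spirit of \cite[Prop.~2.2]{LSbase}, which is reasonable since the proposition is advertised as an explicit-constant version of that result.

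That said, your mechanism for obtaining the bound $b(G)\le 6$ in the non-natural cases has a genuine gap. You repeatedly invoke the almost-simple base-size theorems of \cite{Burness}, \cite{BurnessLiebeckShalev}, \cite{BurnessObrienWilson} for the action of $\bar G = GZ(GL(V))/Z(GL(V))$ on $1$-spaces of $V$, but those theorems are stated for \emph{primitive} permutation actions. There is no reason for the action of $\bar G$ on $PV$, or on an orbit therein, to be primitive: absolute irreducibility of $E(G)$ on $V$ says nothing about maximality of a $1$-space stabilizer in $\bar G$. (For a concrete obstruction, think of $V=V_1\otimes V_2$ with $E(G)$ acting via a tensor factor; the stabilizer of a decomposable line sits inside the stabilizer of the variety of decomposable tensors.) Passing to the primitive quotient on a maximal block system goes the wrong way for base sizes, so the permutation-group results do not transfer back.

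The standard remedy, and almost certainly what \cite{LL} does, is to argue directly on $V$ via fixed-point ratios rather than via an auxiliary permutation action. Outside cases (i) and (ii) one combines the Landazuri--Seitz/James--Wagner dimension lower bounds you already mention with a uniform bound of the shape $\dim C_V(g)\le (1-\tfrac{1}{c})\dim V$ for every non-scalar $g\in G$, obtained from ``generation by few conjugates'' results such as \cite[Thm.~4.1]{GS} (exactly as in the proof of Proposition~\ref{fitt} in this paper). The counting inequality
\[
|V|^b \le \sum_{1\ne g\in G}|C_V(g)|^b \le |G|\cdot |V|^{b(1-1/c)}
\]
then yields $b(G)\le c\,\log|G|/\log|V|$, and the dimension bounds make the right-hand side at most $6$ off a finite list which is handled ad hoc. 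Your identification of the residual ``near-natural'' representations (low-rank symmetric and exterior squares, small spin modules, twisted forms) as the delicate cases is accurate; that bookkeeping is the real content of \cite{LL}.
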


\begin{proof}
This is proved in \cite{LL}. 
\end{proof}

The next result is an explicit version of \cite[Proposition 3.6]{LSbase}.

\begin{prop}\label{fitt}
 Let $V = V_d(q)$ $(q=p^e)$ and $G \leq GL(V)$, and suppose that the Fitting subgroup $F(G)$ is absolutely irreducible on $V$. Then $b(G) \le 13$.
\end{prop}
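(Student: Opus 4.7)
My starting point will be the classical structure theorem for absolutely irreducible linear groups with nilpotent socle (Suprunenko--Blichfeldt; compare Aschbacher's class $\mathcal{C}_6$): since $F(G)$ acts absolutely irreducibly on $V=V_d(q)$, the Sylow-$p$ part of $F(G)$ is trivial, and one must have $F(G)=Z\circ R$ with $Z\le Z(GL(V))$ cyclic, $R$ an extraspecial $r$-group of order $r^{1+2m}$ for some prime $r\neq p$ (or a symplectic-type $2$-group when $r=2$), and $d=r^m$. The group $R$ acts fixed-point-freely on $V\setminus\{0\}$, while the conjugation action of $G$ on $R/Z(R)$ identifies $G/F(G)$ with a subgroup of $\mathrm{Sp}_{2m}(r)$ (up to a small modification when $r=2$). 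I will then use the crude order bound
\[
|G|\le (q-1)\cdot r^{1+2m}\cdot |\mathrm{Sp}_{2m}(r)| \;=\; q\cdot r^{O(m^{2})}.
\]

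The main tool will be the probabilistic method. If $v_1,\ldots,v_c$ are chosen independently and uniformly from $V$, the probability they fail to form a base for $G$ is at most
\[
S(c)\;=\;\sum_{1\neq g\in G}\Big(\tfrac{|V^g|}{|V|}\Big)^{c}\;=\;\sum_{1\neq g\in G} q^{-c(d-\dim V^g)},
\]
so it suffices to verify $S(13)<1$. I would split the sum into inner and outer contributions. For $g\in F(G)\setminus\{1\}$, fixed-point-freeness of $R$ on $V\setminus\{0\}$ forces $V^g=0$, and the total contribution is at most $(|F(G)|-1)\,q^{-cd}\le q\cdot r^{1+2m}\cdot q^{-cd}$. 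For $g\in G\setminus F(G)$, the image of $g$ in $\mathrm{Sp}_{2m}(r)$ is nontrivial, and a standard bound (compare Lemma~3.4 of \cite{LSbase}) will give $\dim V^g\le (1-1/r)\,d$, so these contribute at most $|G|\cdot q^{-cd/r}\le q\cdot r^{O(m^{2})}\cdot q^{-cd/r}$. Because $d=r^m$, each of these upper bounds decays super-exponentially in $m$ once $c=13$, so $S(13)<1$ for all $(r,m)$ with $r^m$ above a small explicit threshold; this handles the generic case.

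The hard part will be the small exceptional cases where $d=r^m\in\{2,3,4,5,7,8,9\}$ and the probabilistic estimate is too crude. Here only finitely many structures for $G$ arise (it sits inside the normalizer in $GL_d(q)$ of an explicit extraspecial $r$-subgroup, so $G/F(G)$ embeds in one of a short list of groups such as $\mathrm{Sp}_2(2),\mathrm{Sp}_2(3),\mathrm{Sp}_4(2),\mathrm{Sp}_2(5)$), and I would verify $b(G)\le 13$ directly for each, using the low-dimensional base-size results invoked earlier in the paper (notably \cite{Burness} and \cite{LL}) applied to the quasisimple composition factors of $G/F(G)$, together with an elementary argument to lift a base for the quotient to one for the full group (adding a single extra point is enough, since $F(G)$ has at most two orbits of regular vectors). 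Splicing the generic probabilistic estimate with this finite case-check will yield $b(G)\le 13$ in all cases.
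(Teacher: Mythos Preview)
Your overall shape (structure theorem for $F(G)$, then a probabilistic fixed-point-ratio argument, then a finite residue of small cases) matches the paper's proof. But two of your key fixed-point claims are false, and without them the probabilistic step collapses.

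First, the assertion that $R$ acts fixed-point-freely on $V\setminus\{0\}$ is wrong. For any non-central $x\in R=r^{1+2m}$ the character of $x$ on the $d$-dimensional irreducible is $0$, so each $r$-th root of unity occurs as an eigenvalue with multiplicity exactly $d/r$; in particular $\dim V^x=d/r\neq 0$. This is easily repaired (these elements still contribute little to $S(c)$), but it is not what you wrote.

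Second, and more seriously, the bound $\dim V^g\le (1-1/r)d$ for $g\notin F(G)$ is not a ``standard bound'' and is in fact false. Take $r=2$, $m=2$, $R=2^{1+4}_+=D_8\circ D_8$, $V=V_2\otimes V_2$ of dimension $4$, and let $g$ be the element swapping the two $D_8$ factors; then $g$ acts on $V$ by $v\otimes w\mapsto w\otimes v$, so $V^g=\mathrm{Sym}^2 V_2$ has dimension $3>2=(1-1/2)d$. More generally, for $r=2$ elements with $\bar g$ close to a transvection in $O_{2m}^\pm(2)$ or $Sp_{2m}(2)$ can have $\dim V^g$ much larger than $d/2$. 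The correct uniform bound, and the one the paper actually uses, is
\[
\dim C_V(h)\le\Big(1-\frac{1}{2m+2}\Big)d,
\]
obtained from Guralnick--Saxl \cite[Theorem~4.1]{GS}: $2m+1$ conjugates of $\bar h$ generate the quotient, hence $2m+2$ conjugates of $h$ generate an irreducible subgroup of $GL(V)$. The paper also first spends three vectors (via Seress \cite{Se}) to kill $F^0$, so that the counting argument is run only over $J\cong JF^0/F^0\le PSp_{2m}(s)$ or $O_{2m}^\pm(2)$ with $c=10$; substituting the Guralnick--Saxl bound into $|V|^{c/(2m+2)}\le |J|$ then leaves only $s=2$, $q=3$, $m\in\{4,5\}$ to handle directly. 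Your sketch of the residual cases is also off: $F(G)$ has many orbits of regular vectors (its order is roughly $q\cdot r^{2m+1}$, far smaller than $|V|=q^{r^m}$), so ``adding a single extra point'' does not dispose of $F(G)$ in the way you suggest.
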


\begin{proof}
We begin by arguing exactly as in the proof of \cite[3.6]{LSbase} that $F=F(G)$ can be taken to be the central product of an extraspecial group $s^{1+2m}$ and the group $Z = \F_q^*$ of scalars, where $s$ is a prime and $d = s^m$. We can also assume that $G = N_{GL(V)}(F)$, so that $G/F$ is isomorphic to either $Sp_{2m}(s)$ or $O_{2m}^\pm (2)$, with $s=2$ in the latter case. Moreover, $q\equiv 1\hbox{ mod }s$, and also $q\equiv 1\hbox{ mod }4$ if $s=2$ and $G/F \cong Sp_{2m}(2)$.

Define $F^0 = F.Z(G/F)$, an extension of $F$ by a group of order at most 2.
By \cite{Se}, there are three vectors $v_1,v_2,v_3 \in V$ such that $F^0_{v_1v_2v_3}=1$. So if we let $J = G_{v_1v_2v_3}$, then $J\cap F^0=1$ and $J \cong JF^0/F^0$ is isomorphic to a subgroup of $PSp_{2m}(s)$ or $O_{2m}^\pm(2)$.

Obviously $b(G) \le 3+b(J)$. Assume for a contradiction that 
\[
b(J) > 10.
\]
Then clearly $d = \dim V > 10$, and $V^{10} = \bigcup_{h\in J\setminus 1}C_{V^{10}}(h)$. Hence
\begin{equation}\label{triv}
|V|^{10} \le \sum_{h\in J\setminus 1} |C_V(h)|^{10}.
\end{equation}
For $h\in J\setminus 1$, Theorem 4.1 of \cite{GS} shows that there are $2m+1$ conjugates of $h$ that generate $G$ modulo $F$, and hence there are $2m+2$ conjugates of $h$ generating $G$. It follows that 
\[
\dim C_V(h) \le \left(1-\frac{1}{2m+2}\right)\dim V.
\]
Hence (\ref{triv}) gives $|V|^{10/(2m+2)} \le |J|$, and so as $|V| = q^d = q^{s^m}$, we have 
\begin{equation}\label{bd}
q^{10s^m/(2m+2)} \le |J| \le \left\{\begin{array}{l}
|PSp_{2m}(s)|,\,s \hbox{ odd}, q\equiv 1\hbox{ mod }s \\
|Sp_{2m}(2)|,\,s=2,\, q\equiv 1\hbox{ mod }4 \\
|O^\pm_{2m}(2)|,\,s=2,\, q \hbox{ odd}.
\end{array}
\right.
\end{equation}
Straightforward computation shows that the only possible values satisfying (\ref{bd}) are $s=2$, $q=3$ and $m=4$ or 5. For $m=5$ we have $G/F \cong O_{10}^\pm (2)$, and in the above argument, \cite[4.1]{GS} shows that we may replace $2m+2$ by $2m+1$ in (\ref{bd}), yielding a contradiction. And if $m=4$ then $d = 2^4=16$ and it is very easy to argue directly that $b(G) \le 13$, as required. 
\end{proof}

The next result is an improvement of Lemma 3.7 of \cite{LSbase}. 

\begin{prop}\label{sub} {\rm (i)} Let $\F_{q_0}$ be a subfield of $\F_q$, let $q = q_0^r$, and let $M = \F_q^*\,GL_d(q_0) \le GL_d(q) = GL(V)$, where $\F_q^*$ denotes the group of scalars. Then for the action of $M$ on $V$ we have
\[
b(M) \le \frac{d}{r}+2.
\]
{\rm (ii)} Let $q=p^r$ with $p$ prime, and let $M = \F_q^*\, \mathrm{Sym}(m) \le GL_{m'}(q) = GL(V)$, where $V$ is the natural module for $\mathrm{Sym}(m)$ over $\F_q$, of dimension $m' = m-\d$ ($\d\in\{1,2\}$). Then 
\[
b(M) \le \frac{\log_pm}{r}+4.
\]
\end{prop}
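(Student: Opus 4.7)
\medskip
\noindent \emph{Proof plan.}

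For part (i), the plan is to exploit the field extension $\F_q / \F_{q_0}$ of degree $r$, packing $r$ units of information into each base vector. Fix an $\F_{q_0}$-basis $1, \alpha, \dots, \alpha^{r-1}$ of $\F_q$ and let $e_1, \dots, e_d$ be the standard $\F_q$-basis of $V$. I would set $k = \lceil d/r \rceil$, partition $\{1, \dots, d\}$ into consecutive blocks $B_1, \dots, B_k$ each of size at most $r$, and take
$$v_i = \sum_{j=1}^{|B_i|} \alpha^{j-1} e_{(i-1)r + j} \qquad (1 \le i \le k).$$
Expanding $g v_i = v_i$ in the $\F_{q_0}$-basis $\{\alpha^l e_m\}$ of $V$ and using $\F_{q_0}$-linear independence of $1, \alpha, \dots, \alpha^{r-1}$ forces any $g \in GL_d(q_0)$ fixing every $v_i$ to equal $I_d$. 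To kill the remaining scalar part, I would append $w := e_1$: for $\lambda g \in M$ fixing $\{v_1, \dots, v_k, w\}$, an analogous coefficient-matching applied to $g v_i = \lambda^{-1} v_i$ shows that $g$ acts on each full block as the matrix of multiplication by $\mu := \lambda^{-1}$ in the basis $1, \alpha, \dots, \alpha^{r-1}$; the relation $\lambda g e_1 = e_1$ then forces the $\alpha$-expansion of $\mu$ to collapse to its constant term, so $\mu \in \F_{q_0}$ and $g = \mu I_d$, giving $\lambda g = I$. Hence $b(M) \le k + 1 \le d/r + 2$.

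For part (ii), the plan switches to a ``distinguishing coloring'' argument exploiting the permutation action of $\mathrm{Sym}(m)$. I would realise $V$ as a subquotient of the permutation module $\F_q^m$: with $W_1 = \{x \in \F_q^m : \sum x_j = 0\}$ and $W_0 = \F_q \cdot (1, \dots, 1)$, we have $V = W_1$ if $p \nmid m$ and $V = W_1/W_0$ otherwise. Setting $k = \lceil (\log_p m)/r \rceil$ so that $q^k = p^{rk} \ge m$, the first step is to pick $m$ pairwise distinct ``color vectors'' $\vec c_1, \dots, \vec c_m \in \F_q^k$ with $\sum_j \vec c_j = 0$ (obtainable by a simple counting argument, possibly after inflating $k$ by one in small edge cases), and let $v_i \in V$ be the vector whose $j$-th coordinate is the $i$-th entry of $\vec c_j$ (with minor adjustments in the $p \mid m$ case to account for the $W_0$-quotient). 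Distinctness of the colors makes $\{v_1, \dots, v_k\}$ a base for $\mathrm{Sym}(m)$ on $V$. A pair $(\lambda, \sigma) \in M$ fixing all the $v_i$ then forces $\vec c_{\sigma(j)} = \lambda \vec c_j$ for each $j$ (up to a common affine shift when $p \mid m$), so $\sigma$ is uniquely determined by $\lambda$ and the residual stabilizer has order at most $q - 1$. Since $\mathrm{Sym}(m)$ acts faithfully on $V$ for $m \ge 3$, each non-identity element of this residual stabilizer fixes only a proper $\F_q$-subspace of $V$, so a union-bound argument shows that adjoining a bounded number (at most three) of further generic vectors annihilates it, giving $b(M) \le k + 3 \le (\log_p m)/r + 4$.

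The main obstacle will be the scalar-killing step in part (ii): in part (i) the uniform block structure essentially automates it via the single vector $e_1$, but in part (ii) one must show that a universal constant number of extra vectors suffices to break every non-trivial scalar in $\F_q^*$, regardless of how large $q$ and $m$ become. This is best handled either by the union-bound argument sketched above or by an explicit construction of a color set whose image in $\F_q^k$ is not invariant (even up to affine shift) under any non-trivial homothety.
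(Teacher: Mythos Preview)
Your argument for part (i) is essentially the paper's: both pack an $\F_{q_0}$-basis $\lambda_1,\ldots,\lambda_r$ of $\F_q$ into consecutive blocks of $\{e_1,\ldots,e_d\}$ to kill $GL_d(q_0)$ with about $d/r$ vectors. The paper then observes that the residual stabilizer embeds in the cyclic group $M/GL_d(q_0)$ and invokes the cited fact that a cyclic linear group has base size~$1$, whereas you give the explicit extra vector $e_1$. Your intermediate sentence that ``$g$ acts on each full block as the matrix of multiplication by $\mu$'' is unnecessary and not quite right as stated; the clean route is simply: $g e_1=\lambda^{-1}e_1$ with $g\in GL_d(q_0)$ forces $\lambda^{-1}\in\F_{q_0}$, whence $\lambda g\in GL_d(q_0)$ fixes every $v_i$ and so equals $I$.

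For part (ii) the paper merely cites an earlier paper, so there is nothing to compare against; your distinguishing-colouring plan is the natural one. It is correct when $p\nmid m$, but there is a genuine gap when $p\mid m$. In that case fixing $[v_i]$ in $W_1/W_0$ only gives $\vec c_{\sigma(j)}=\lambda\vec c_j+\vec b$ for some $\vec b\in\F_q^{k}$ depending on the element, so $\sigma$ is determined by the pair $(\lambda,\vec b)$, not by $\lambda$ alone. Hence the residual $J$ embeds in $\F_q^{*}\ltimes\F_q^{k}$, which has order $(q-1)q^{k}$, not $q-1$. Your union bound as written, $(|J|-1)\,q^{m'-1}<q^{m'}$, then fails, because $q^{k}$ can exceed $q$ by an unbounded factor. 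The fix is either (a) to choose the colour set so that it admits no non-trivial map $\vec c\mapsto \lambda\vec c+\vec b$ as a set---this can be arranged when $q^{k}>m$, e.g.\ by forcing the multiset of successive differences along some ordering to be asymmetric, but it needs an explicit construction, not just a remark---or (b) to run a structured union bound: the translation part $J_0$ of $J$ acts on $V$ via permutations that are products of $m/p$ disjoint $p$-cycles, whose fixed space has dimension about $m/p$, so a single generic vector already kills $J_0$, and then one more handles the cyclic quotient $J/J_0\hookrightarrow\F_q^{*}$. Either route closes the gap within your $+3$ budget, but neither is the one-line ``order at most $q-1$'' argument you wrote.
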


\begin{proof} (i)  Let $\l_1,\ldots ,\l_r$ be an $\F_{q_0}$-basis for $\F_q$, and let $e_1.\ldots,e_d$ be the standard basis for $V = \F_q^d$ (that is, $e_i = (0,\ldots ,1,\ldots 0)$ where the 1 is in the $i^{th}$ coordinate). Write $d=kr+l$ with $k,l \in \Z$ and $0\le l<r$, and define
\[
\begin{array}{l}
v_i = \sum_{j=(i-1)r+1}^{ir} \l_je_j\;\;(1\le i\le k), \\
v_{k+1} = \sum_{j=kr+1}^d \l_je_j.
\end{array}
\]
If we write $M_0 = GL_d(q_0)$, it is easy to see that $(M_0)_{v_1\ldots v_{k+1}} = 1$. Hence if 
$J = M_{v_1\ldots v_{k+1}}$ then $J \cong JM_0/M_0$ is cyclic, and so by \cite[3.1]{SZ}, $J$ has a base of size 1. Thus $b(M) \le k+2 \le \frac{d}{r}+2$.

(ii) This follows directly from the proof of \cite[3.7(ii)]{LSbase}.
\end{proof}

As in \cite{LSbase}, for $H\le GL(V)$ define $b^*(H)$ to be the minimal size of a set $B$ of vectors such that any element of $H$ that fixes every 1-space $\la v\ra$ with $v\in B$ is necessarily a scalar multiple of the identity. We call such a set $B$ a {\it strong} base for $H$. By \cite[3.1]{LSbase}, 
\[
b(H) \le b^*(H) \le b(H)+1.
\]
Next we give an improvement of Lemma 3.3(iii) of \cite{LSbase}. 

\begin{lem}\label{33}
Let $V_1,V_2$ be vector spaces over $\F_q$ with $\dim V_i = n_i$ 
     and $n_1 \leq n_2$, and let $H_i \leq GL(V_i)$ for $i=1,2$. Denote 
     by $H_1 \otimes H_2$ the image of  $H_1 \times H_2$ acting in the 
     natural way on the tensor product $V_1 \otimes V_2$. 

If $n_1 \le b^*(H_2)$, then 
     \[ 
     b(H_1 \otimes H_2) \le \frac{b^*(H_2)}{n_1}+3. 
     \] 
\end{lem}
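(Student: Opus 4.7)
The plan is to exhibit an explicit set $\mathcal B \subset V_1 \otimes V_2$ whose pointwise stabilizer in $H_1 \otimes H_2$ is trivial, with $|\mathcal B| \le b^*(H_2)/n_1 + 3$. Fix a strong base $\{w_1,\ldots,w_m\}$ for $H_2$ of minimal size $m = b^*(H_2)$ (with $m \ge n_1$ by hypothesis), and a basis $\{e_1,\ldots,e_{n_1}\}$ of $V_1$. After reordering the strong base I may assume $\{w_1,\ldots,w_{n_1}\}$ are linearly independent. Set $s = \lceil m/n_1 \rceil$ and partition $\{1,\ldots,m\}$ into $s$ blocks $I_1,\ldots,I_s$, the first $s-1$ of size exactly $n_1$ and the last of size at most $n_1$. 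For each block $j$ define
\[
u_j = \sum_{i} e_i \otimes w_{\pi(j,i)} \in V_1 \otimes V_2,
\]
with $\pi$ chosen so that every $w_\ell$ appears in some $u_j$ (wrapping around in the last block if it is short). To these $s$ tensor vectors I adjoin (up to) three auxiliary vectors
\[
v_t = \sum_{i=1}^{n_1} (M_t e_i) \otimes w_i \quad (t = 1,2,3),
\]
for carefully chosen matrices $M_1, M_2, M_3 \in GL(V_1)$.

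The key observation is the following \emph{intertwining mechanism}. Suppose $(h_1,h_2) \in H_1 \times H_2$ satisfies $(h_1 \otimes h_2)(u_j) = u_j$ for every $j$. Reading the equality coordinate-by-coordinate in the basis $\{e_i\}$ yields, for each $j$, a matrix identity
\[
h_2 \circ W_j = W_j \circ h_1^{-T},
\]
where $W_j \colon V_1 \to V_2$ is the linear map $e_i \mapsto w_{\pi(j,i)}$. For $j = 1$ the map $W_1$ is injective, so combined with the analogous stabilization condition on each $v_t$ one deduces the commutation relation $h_1^{-T} M_t = M_t h_1^{-T}$.

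Now $M_1, M_2, M_3$ are chosen so that their common centralizer in $GL(V_1)$ is exactly the group of scalars (for instance, $M_1$ a long cyclic shift on the basis $\{e_i\}$ and $M_2$ a transvection perturbing this shift, with $M_3$ a suitable third generator for uniformity in small $n_1$). This forces $h_1^{-T} = \lambda I$, i.e., $h_1 = \lambda I$. Substituting back into the intertwinings gives $h_2 w_\ell = \lambda^{-1} w_\ell$ for every $\ell$ (since every $w_\ell$ is a column of some $W_j$), so $h_2$ fixes every one-space $\langle w_\ell \rangle$. The strong base property of $\{w_\ell\}$ then yields $h_2 = \lambda^{-1} I$, and hence $h_1 \otimes h_2$ is trivial in $GL(V_1 \otimes V_2)$. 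Thus $\mathcal B$ is a base of size $s + 3 \le b^*(H_2)/n_1 + 3$ (absorbing the ceiling error into the additive constant).

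The main technical obstacle lies in the explicit choice of $M_1, M_2, M_3$, uniformly in $n_1$ and $q$, so that their common centralizer in $GL_{n_1}(q)$ is precisely the scalars, and in verifying the edge cases (tiny $n_1$, small fields) where a naive choice of shift-plus-transvection might fail to generate a subgroup with scalar centralizer. One might also need to handle separately the degenerate case where the strong base does not span $V_2$, but this can be reduced to the spanning case after minor bookkeeping, at the cost of at most one additional vector absorbed by the constant $3$.
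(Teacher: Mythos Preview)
Your approach is essentially the same as the paper's: partition a strong base for $H_2$ into blocks of size $n_1$, form one ``diagonal'' tensor per block, then adjoin a small number of twisted tensors to force the $V_1$-factor to be scalar. The intertwining computation you sketch is correct. However, your final count does not give the stated bound. With $s=\lceil m/n_1\rceil$ block vectors and three auxiliary vectors you get $s+3=\lceil m/n_1\rceil+3$, and when $n_1\nmid m$ this equals $\lfloor m/n_1\rfloor+4>m/n_1+3$. The phrase ``absorbing the ceiling error into the additive constant'' is precisely where the inequality fails: the ceiling error (up to $1$) plus your three auxiliary vectors exceeds the allowed slack of $3$.

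The paper fixes this by using only \emph{two} auxiliary vectors, built from matrices $C,D\in SL_{n_1}(q)$ that generate $SL_{n_1}(q)$; since $SL_{n_1}(q)$ is absolutely irreducible on $V_1$, its centralizer in $GL(V_1)$ is already the scalars, and this works uniformly for every $n_1\ge 1$ and every $q$ with no ``small $n_1$'' edge cases. The paper's total is then $(r+1)+2=r+3$ with $r=\lfloor m/n_1\rfloor$, and $r+3\le m/n_1+3$ holds on the nose. (The paper also takes the entire strong base $y_1,\ldots,y_b$ to be linearly independent, a fact available from \cite{LSbase}, which removes your worry about whether the first $n_1$ vectors can be reordered to be independent.) Replacing your $M_1,M_2,M_3$ by two generators of $SL_{n_1}(q)$ repairs the count and brings your argument in line with the paper's.
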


\begin{proof}
We follows the proof of \cite[Lemma 3.3(iii)]{LSbase}. Let $b = b^*(H_2)$. Assume $n_1\le b$, and let 
$y_1, \ldots ,y_b$ be a  linearly independent strong base for $H_2$ in $V_2$. 
     Let $x_1,\ldots ,x_{n_1}$ be a basis of $V_1$. 

     Write $b = rn_1+s$ with $r,s$ integers and $0 \leq s < n_1$ 
     For $1 \leq i \leq r$ define 
     \[ 
     v_i = \sum_{k=1}^{n_1} x_k \otimes y_{(i-1)n_1+k}, \;\;W_i = \la x_k \otimes y_{(i-1)n_1+k} : 1\le k\le n_1\ra,
     \] 
     and set $v_{r+1} = \sum_{k=1}^s x_k \otimes y_{rn_1+k}$, $W_{r+1} =  \la x_k \otimes y_{rn_1+k} : 1\le k\le s\ra$. 

     Consider the stabilizer $L=(H_1 \otimes H_2)_{v_1\dots v_{r+1}}$. By Lemma 3.3(i) of \cite{LSbase}, $L$ stabilizes $V_1\otimes W_i$ for all $1\le i\le r+1$. 

Next choose $C,D \in SL_{n_1}(q)$ generating $SL_{n_1}(q)$, and for each $i$, define $\g_i = C\otimes 1, \d_i=D\otimes 1 \in GL(V_1 \otimes W_i)$. Let
\[
v = \sum_{i=1}^{r+1}v_i\g_i,\; w = \sum_{i=1}^{r+1}v_i\d_i.
\]
At this point the argument at the end of the proof of \cite[3.3(iii)]{LSbase} shows that $L_{vw}=1$. Hence 
$ b(H_1 \otimes H_2)  \le r+3 \le \frac{n_1}{b}+3$, as required. 
\end{proof}

The next result is Theorem 1 of \cite{LSbase2}, with an explicit constant $C = 14$. The proof is identical to that in \cite{LSbase2}, but using Propositions \ref{quasi} and \ref{fitt} at the end to justify that $C=14$ works.

 \begin{prop}\label{corr}
Let $V = V_d(q)$, and let $H$ be a subgroup of $\G L(V)$ such that $H$ acts primitively on $V$ and $H^0:=H\cap GL(V)$ is absolutely irreducible on $V$.  Suppose that $b^*(H^0) > 14$. Then 
\[
H^0 \le H_0 \otimes \bigotimes_{i=1}^s \mathrm{Sym}(m_i) \otimes \bigotimes_{i=1}^t {\rm Cl}_{d_i}(q_i),
\]
where $s+t\ge 1$ and the following hold:
\begin{itemize}
\item[(i)] $H_0\le GL_{d_0}(q)$ with $b^*(H_0)\le 14$
\item[(ii)] each factor $\mathrm{Sym}(m_i) < GL_{m_i'}(q)$ and ${\rm Cl}_{d_i}(q_i) \le GL_{d_i}(q)$ is acting on the natural module over $\F_q$, where $m_i' = m_i-\d_i$, $\d_i\in \{1,2\}$
\item[(iii)] $d = d_0\cdot \prod_1^s m_i' \cdot \prod_1^t d_i$
\item[(iv)] $F^*(H^0)$ contains $\prod_1^s \mathrm{Alt}(m_i) \cdot \prod_1^t {\rm Cl}_{d_i}(q_i)^{(\infty)}$.
\end{itemize}
\end{prop}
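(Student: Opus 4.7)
The plan is to follow the structural argument of Theorem~1 in \cite{LSbase2}, and to verify that the explicit constant $C=14$ suffices given the sharp numerical bounds in Propositions~\ref{quasi} and \ref{fitt}. The first step is to obtain a tensor decomposition of $V$ compatible with $F^*(H^0)$. Since $H^0$ is primitive and absolutely irreducible on $V$, a Clifford--Aschbacher argument expresses $V$ as a tensor product $V = V_0 \otimes V_1 \otimes \cdots \otimes V_n$ in which $V_0$ supports an absolutely irreducible action of (the normalizer of) the Fitting subgroup $F(H^0)$, while each $V_j$ $(j\ge 1)$ supports an absolutely irreducible action of a quasisimple component $L_j$ of $E(H^0)$. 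Correspondingly $H^0$ embeds in the tensor product $N_0 \otimes N_1 \otimes \cdots \otimes N_n$, where $N_j := N_{GL(V_j)}(F^*(H^0)|_{V_j})$.

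Next I would classify each tensor factor using the two key propositions. Applying Proposition~\ref{fitt} to the Fitting factor $N_0$ yields $b(N_0)\le 13$ and therefore $b^*(N_0)\le 14$. For each $j\ge 1$, Proposition~\ref{quasi} gives exactly three alternatives: either $L_j=\mathrm{Alt}(m_j)$ on its natural module, or $L_j = {\rm Cl}_{d_j}(q_j)$ on its natural module, or $b(N_j)\le 6$ (so that $b^*(N_j)\le 7$).

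Then I would regroup the tensor factors into three bins: factors of the first two types become the $\mathrm{Sym}(m_i)$ and ${\rm Cl}_{d_i}(q_i)$ factors of the conclusion, while the remaining factors (the Fitting factor, if present, together with the quasisimple factors of the third type) are tensored together to form $H_0$. Each sub-factor of $H_0$ then has $b^*\le 14$, and an inductive use of Lemma~\ref{33} (exploiting that at most one Fitting factor occurs, contributing the single value $14$) yields $b^*(H_0)\le 14$, giving (i). Conditions (ii), (iii), (iv) are immediate from the construction: the dimensions multiply by the tensor decomposition, the natural-module factors are exactly the ones named, and the layer of $H^0$ contains the socles $\mathrm{Alt}(m_i)$ and ${\rm Cl}_{d_i}(q_i)^{(\infty)}$ of these components. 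The condition $s+t\ge 1$ is forced by the hypothesis $b^*(H^0)>14$: otherwise $H^0=H_0$ would already satisfy $b^*(H^0)\le 14$.

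The main obstacle is the bookkeeping in the third step, namely showing that when several sub-factors (each with $b^*\le 14$) are tensored together to form $H_0$, the combined $b^*(H_0)$ remains bounded by $14$ rather than being inflated by the tensor combination. This is where the sharper numerical outputs of Proposition~\ref{quasi} ($b\le 6$) and Proposition~\ref{fitt} ($b\le 13$) are essential, via Lemma~\ref{33}: the very fact that at most one of the combined sub-factors can contribute the Fitting bound $14$, while all others contribute at most $7$, gives enough slack in Lemma~\ref{33} to keep the multiplicatively combined bound at $14$. This is precisely the improvement over the unspecified constant in the original argument of \cite{LSbase2}.
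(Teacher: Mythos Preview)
Your overall approach matches the paper's: the proof of Proposition~\ref{corr} simply refers to \cite[Theorem~1]{LSbase2} and notes that Propositions~\ref{quasi} and~\ref{fitt} supply the explicit constant $C=14$. Your structural outline (tensor decomposition compatible with $F^*(H^0)$, classification of each factor, regrouping into $H_0$ and the $\mathrm{Sym}$/$\mathrm{Cl}$ factors) is correct.

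There is, however, a genuine gap in your third step. You invoke Lemma~\ref{33} to bound $b^*(H_0)$, where $H_0$ is the tensor product of the Fitting factor (with $b^*\le 14$) and the type~(iii) quasisimple factors (each with $b^*\le 7$). But Lemma~\ref{33} yields $b(H_1\otimes H_2)\le b^*(H_2)/n_1+3$ under the hypothesis $n_1\le b^*(H_2)$; it is designed to \emph{shrink} base size by tensoring against a factor of moderate dimension, and it does not usefully combine several small-$b^*$ factors whose dimensions may be large. For instance, a type~(iii) factor such as $E_8(q)$ can have $b^*\le 7$ on a module of dimension $248$, and then neither orientation of Lemma~\ref{33} applies. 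The correct tool here is \cite[Lemma~3.3(ii)]{LSbase}, which gives $b^*(H_1\otimes H_2)\le\max\bigl(b^*(H_1),b^*(H_2)\bigr)$. With this one has $b^*(H_0)\le\max(14,7,\ldots,7)=14$ immediately; there is no ``multiplicative inflation'' to worry about and no inductive slack argument is needed. Once you make this single substitution your argument is complete.
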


The next result is an improvement of \cite[Proposition 2]{LSbase2}.

\begin{prop} \label{basest}
Let $H,H^0$ be as in Proposition $\ref{corr}$, with $b^*(H^0)>14$. Take $m_s' = {\rm max}(m_i':1\le i\le s)$ and $d_t = {\rm max}(d_i:1\le i\le t)$ (define these to be $0$ if $s=0$ or $t=0$, respectively).
\begin{itemize}
\item[(i)] Suppose $t \ge 1$ and $m_s' \le d_t$, and let $q = q_t^r$. Then $d<d_t^2$, $d_t \ge 14$, and
\[
b(H^0) \le b(GL_{d/d_t}(q) \otimes GL_{d_t}(q_t)) \le \frac{d_t^2}{dr}+5.
\]
\item[(ii)] Suppose $s \ge 1$ and $m_s' > d_t$, and let $q = p^r$. Then $d<(m_s')^2$, $m_s' \ge 14$, and
\[
b(H^0) \le b(GL_{d/m_s'}(q) \otimes \mathrm{Sym}(m_s)) \le \frac{m_s\log_p m_s}{dr}+6.
\]
\end{itemize}
\end{prop}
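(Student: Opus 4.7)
The approach is to embed $H^0$ as a subgroup of a two–factor tensor product of the form $GL(V_1)\otimes H_2$, where $V_2$ corresponds to the dominant tensor factor identified in Proposition \ref{corr} (namely the natural module for ${\rm Cl}_{d_t}(q_t)$ in case (i), or the natural module for $\mathrm{Sym}(m_s)$ in case (ii)), and $V_1$ absorbs all the remaining tensor factors together with $H_0$. Lemma \ref{33} will then be applied with $H_1$ the full general linear group on $V_1$, and the strong base bound for $H_2$ will come from Proposition \ref{sub}(i) in case (i) and Proposition \ref{sub}(ii) in case (ii).

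The preliminary step will be to establish the dimensional inequalities $d_t\ge 14$ (resp.\ $m_s'\ge 14$) and $d<d_t^2$ (resp.\ $d<(m_s')^2$). The plan is a bootstrap on the hypothesis $b^*(H^0)>14$ using the decomposition of Proposition \ref{corr}: if the largest factor had dimension at most $14$, or if the complementary dimension $d/d_t$ (resp.\ $d/m_s'$) were at least the dominant dimension, one could peel off tensor factors one at a time with Lemma \ref{33}, playing always the smaller factor as $V_1$, and obtain $b^*(H^0)\le 14$, contradicting the assumption. The fact that $b^*(H_0)\le 14$ supplied by Proposition \ref{corr}(i) serves as the base of this iteration.

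Once $n_1:=d/d_t\le d_t=:n_2$ is established, Lemma \ref{33} applies. In case (i), take $H_2=\F_q^*\cdot GL_{d_t}(q_t)\le GL_{d_t}(q)$, which contains ${\rm Cl}_{d_t}(q_t)$; Proposition \ref{sub}(i) with $q=q_t^r$ gives $b(H_2)\le d_t/r+2$, so $b^*(H_2)\le d_t/r+3$. Lemma \ref{33} then yields
\[
b(H^0)\le \frac{b^*(H_2)}{n_1}+3\le \frac{d_t(d_t/r+3)}{d}+3=\frac{d_t^2}{dr}+\frac{3d_t}{d}+3,
\]
and since $d\ge d_t$ (as $d_t$ divides $d$), the term $3d_t/d$ is at most $3$; the extremal case $d=d_t$, where Lemma \ref{33} degenerates, is handled directly from Proposition \ref{sub}(i), which gives $b(H^0)\le d_t/r+2=d_t^2/(dr)+2$. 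In case (ii), set $H_2=\F_q^*\cdot\mathrm{Sym}(m_s)$; Proposition \ref{sub}(ii) now gives $b^*(H_2)\le \log_p(m_s)/r+5$, and an identical application of Lemma \ref{33} with $n_1=d/m_s'$ and $n_2=m_s'$ produces
\[
b(H^0)\le\frac{m_s'(\log_p(m_s)/r+5)}{d}+3\le \frac{m_s\log_p m_s}{dr}+6,
\]
after absorbing $m_s'/m_s<1$ and bounding the leftover additive term.

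The main obstacle will be the bootstrap in the preliminary step: translating the \emph{qualitative} hypothesis $b^*(H^0)>14$ into the \emph{quantitative} inequalities $d_t\ge 14$ and $d<d_t^2$ cleanly. This requires keeping careful track of how $b^*$ compounds under iterated tensor decomposition --- in particular, showing that stripping off a tensor factor of dimension $e$ through Lemma \ref{33} can only multiply the base size by a factor of order $1/e$ plus an additive constant, so that a single sufficiently large dominant factor is forced. The subfield structure of the classical factors and the distinction between $\mathrm{Sym}(m_i)$ and ${\rm Cl}_{d_i}(q_i)$ factors mean this needs to be done uniformly across the decomposition of Proposition \ref{corr}.
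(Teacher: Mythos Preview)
Your overall architecture is the same as the paper's: isolate the dominant tensor factor $M$ (either ${\rm Cl}_{d_t}(q_t)$ or $\mathrm{Sym}(m_s)$), bound $b^*$ of that factor by Proposition~\ref{sub}, and finish with Lemma~\ref{33} against the complementary factor of dimension $d/d_t$ (or $d/m_s'$). The paper packages this as an explicit induction on $s+t$; your ``bootstrap'' is that induction, and the final numerical estimates you write down match the paper's base-case computation.

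The preliminary step, however, does not work as you have sketched it. First, to get $d_t\ge 14$ you propose to ``peel off tensor factors one at a time with Lemma~\ref{33}'' and arrive at $b^*(H^0)\le 14$. Lemma~\ref{33} gives $b\le b^*(H_2)/n_1+3$, and iterating quotients of this shape does not produce an absolute bound of $14$. The paper instead invokes \cite[Lemma~3.3(ii)]{LSbase}, which is the statement $b^*(H_1\otimes H_2)\le\max(n_1,b^*(H_2))$; iterating \emph{that} from $b^*(H_0)\le 14$ through factors of dimension at most $13$ does yield $b^*(H^0)\le 14$. Second, for $d<d_t^2$: assuming $d\ge d_t^2$ and peeling off $M$ (now the smaller side) via Lemma~\ref{33} gives $b(H^0)\le b^*(N)/d_t+3$, so you next need a bound on $b^*(N)$. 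The paper obtains this from the inductive hypothesis (which yields $b^*(N)\le d_i/u+6$ for the next-largest classical factor) and then concludes $b(H^0)<5$, contradicting $b^*(H^0)>14$. But Lemma~\ref{33} in that step needs its hypothesis $d_t\le b^*(N)$, and this is \emph{not} automatic --- the paper supplies it by invoking \cite[Lemma~3.3(iv)]{LSbase}. You do not mention this ingredient, and without it the contradiction cannot be reached. The same issue recurs in your main step: Lemma~\ref{33} requires $d/d_t\le b^*(H_2)$, and knowing only $d/d_t<d_t$ is insufficient, since $b^*(H_2)$ may be as small as about $d_t/r$.
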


\begin{proof}
We follow the proof of \cite[Proposition 2]{LSbase2}, but as the constants are different we give a few details. 

We proceed by induction on $s+t$. For the base case $s+t=1$, we have $H^0 \le H_0\otimes M$, where 
$M = {\rm Cl}_{d_1}(q_1)$ or $\mathrm{Sym}(m_1)$. Consider the first case, and write $q=q_1^r$. Proposition \ref{sub} gives 
$b(M) \le \frac{d_1}{r}+2$, hence also $b^*(M) \le \frac{d_1}{r}+3$. If $d_0=1$ the conclusion in (i) is immediate, so assume $d_0\ge 2$. As in the proof of \cite[Proposition 2]{LSbase2}, we see that $d_0\le d_1$.  Then Lemma \ref{33} gives
\[
\begin{array}{ll}
b(H^0) & \le \frac{b^*(M)}{d_0}+3 \\
             & \le \frac{d_1}{rd_0}+\frac{3}{d_0}+3 \\
             & < \frac{d_1}{rd_0}+5,
\end{array}
\]
so that (i) holds (note that $d_1\ge 14$ by
\cite[3.3(ii)]{LSbase}). Similarly (ii) holds when $M = \mathrm{Sym}(m_1)$. 

Now assume $s+t\ge 2$. Let $m$ be the maximum of $d_t$ and $m_s'$, and write $M$ for the corresponding group ${\rm Cl}_{d_t}(q_t)$ or $\mathrm{Sym}(m_s)$. Note that $m\ge 14$, since 
otherwise \cite[3.3(ii)]{LSbase}  implies that $b^*(H^0) \le 14$. 
Let $N$ be the tensor product of $H_0$ and the other factors 
 ${\rm Cl}_{d_i}(q_i)$, $\mathrm{Sym}(m_i)$, so that $H^0 \le N\otimes M$. If $b^*(N)\le 14$ the conclusion follows as in the $s+t=1$ case, so assume $b^*(N)>14$.

Let $m'$ be the largest among the dimensions $d_i,m_i'$ omitting $m$, and write $N_1$ for the corresponding group 
 ${\rm Cl}_{d_i}(q_i)$ or $\mathrm{Sym}(m_i)$. 

Consider the case where  $N_1 = {\rm Cl}_{d_i}(q_i)$. Let $q = q_i^u$. 
By induction we have 
\[
b^*(N) \le b(N)+1 \le \frac{d_i^2m}{du}+6 \le \frac{d_i}{u}+6.
\]
Suppose $d\ge m^2$. Then $b^*(N)\ge m$ by \cite[3.3(iv)]{LSbase}, so Lemma \ref{33} implies that 
\[
b(H^0) \le  \frac{b^*(N)}{m}+3 \le \frac{d_i}{um} + \frac{6}{m}+3.
\]
Since $m\ge d_i$ and $m>14$, this yields 
$b(H^0) < 5$, a contradiction. Hence $d<m^2$ in this case. Now the conclusion of the proposition follows by the argument given for the $s+t=1$ case.

Finally, consider the case where $N_1 = \mathrm{Sym}(m_i)$. Let $q=p^r$. By induction, 
\[
b^*(N) \le b(N)+1 \le  \frac{(m_i\log_pm_i)\cdot m}{dr}+7 \le \frac{\log_pm_i}{r}+8.
\]
Now the argument of the previous paragraph gives the conclusion.
\end{proof}

\vspace{4mm}
\noindent {\bf Proof of Theorem \ref{main}}

We are now in a position to prove Theorem \ref{main}.

We begin just as in the proof of \cite[Corollary 3]{LSbase2}. Suppose $H\le GL(V)$ acts primitively and irreducibly on a finite vector space $V$ defined over a field of size $q_{0}$. Choose $q=q_0^r$ maximal such that $H \le \G L_d(q) \le GL_{dr}(q_0)$. Write $H^0 = H\cap GL_d(q)$ and $V = V_d(q)$. By \cite[12.1]{TAMS}, $H^0$ is absolutely irreducible on $V$.  

If $b^*(H^0) \le 14$ then $b(H) \le 15$, and the conclusion of Theorem \ref{main} holds. So assume now that 
$b^*(H^0) > 14$. Then $H^0$ is given by Proposition \ref{corr}, and (i) or (ii) of Proposition \ref{basest} holds. 

Consider case (i) of Proposition \ref{basest}. Write $m=d_t$ and $q = q_t^r$. Then $d<m^2$, $H^0 \triangleleft Cl_m(q_t)$, $m\ge 14$,  and 
\begin{equation}\label{last}
b(H^0) \le \frac{m^2}{dr}+5.
\end{equation}
From the order formulae for classical groups, we see that 
\[
\log_{q_t}(|H^0|) \ge \log_{q_t} |\O_m^\pm (q_t)| \ge \frac{1}{2}m(m-1)-1.
\]
Hence 
\[
\frac{\log |H|}{\log |V|} \ge \frac{\frac{1}{2}m(m-1)-1}{rd}, 
\]
and so (\ref{last}) gives
\[
b(H^0) \le 2\,\frac{\log |H|}{\log |V|} + 5 + \frac{m+2}{rd} < 2\,\frac{\log |H|}{\log |V|} + 7.
\]
This completes the proof  in case (i) of Proposition \ref{basest}.

To conclude, consider case (ii) of Proposition \ref{basest}. Write $m=m_s$, $m'=m_s'$ and $q = p^r$. Then $d<m^2$, $H^0 \triangleleft \mathrm{Alt}(m)$, $m'  \ge 14$,  and 
\begin{equation}\label{symeq}
b(H^0) \le \frac{m\log_p m}{dr}+6.
\end{equation}
Now $|H^0| \ge \frac{1}{2}m! > \frac{1}{2}\left(\frac{m}{e}\right)^m$, and so 
\[
\frac{\log |H|}{\log |V|} \ge \frac{m(\log_pm-\log_pe)-1}{rd}. 
\]
Hence (\ref{symeq}) gives
\[
b(H^0) \le \frac{\log |H|}{\log |V|} + 6 + \frac{m\log_2e+3}{rd} < \frac{\log |H|}{\log |V|} + 8,
\]
giving the conclusion of Theorem \ref{main} (actually, a stronger version, with the constant 2 replaced by 1). 

This completes the proof of Theorem \ref{main}. 

\subsection{Imprimitive linear groups}

It remains to prove Theorem \ref{ez} in the case where the irreducible linear group $H \le GL(V)$ acts imprimitively on $V$.
Assume that $H$ preserves the direct sum decomposition $V = V_{1} \oplus \cdots \oplus V_{t}$ where $V_{1}, \ldots , V_{t}$ are subspaces of $V$, and $t>1$ is chosen maximally. Let $H_{1}$ be the stabilizer of $V_{1}$ in $H$. Let us denote the minimal base size of the action $K_{1}$ of $H_{1}$ on $V_{1}$ by $b(K_{1})$. By the choice of $t$, the action of $K_1$ on $V_1$ is primitive, so by Theorem \ref{main}, we have $b(K_{1}) \leq 15$ or 
$$b(K_{1}) \leq 2 (\log |K_{1}| / \log |V_{1}|) + 9.$$ 
If $b(K_{1}) \leq 15$, then, by \cite[Theorem 3.4]{DHM} and its proof, we have 
$$b(H) \leq \log |H| / \log |V| + b(K_{1}) + 1 + (\log 48 / \log (2^{b(K_{1})})) \leq \log |H| /\log |V| + 17.$$ 

So assume now that $b(K_{1}) \geq 16$ and $b(K_{1}) \leq 2 (\log |K_{1}| / \log |V_{1}|) + 9$. In that case our proof strictly follows the arguments of \cite{DHM}, but in order to be able to prove Theorem \ref{ez} we need to give more precise
estimates for the constants appearing there. In the proof we will freely use the concepts and notation of \cite{DHM}.

A main step in proving \cite[Theorem 3.17]{DHM} was to reduce
the problem for linear groups which do not preserve any tensor product
decomposition of the vector space (possibly over a proper field
extension of the base field). In order for the reduction argument to work,  
a generalisation of the problem was needed. 

Let us view $H$ not just as a subgroup of $GL(V)$ but also as an abstract group. 
We will define certain maps $X:H \rightarrow GL(V)$. For this let $T_{V}$ denote the group 
\[T_V=\{g\in GL(V)\,|\,g(V_i)=V_i \textrm{ and }g|_{V_i}\in Z(GL(V_i))
\ \ \forall 1\leq i\leq t\}\simeq (\FF q^\times)^t\] where $\FF q$ is
the field of definition for $V$. According to \cite[Definition
3.5]{DHM} we say that a map $X:H \rightarrow GL(V)$ is a $\pmod
{T_{V}}$-representation of $H$ if the following two properties hold:
(1) $X(g)$ normalizes $T_{V}$ for every $g\in H$; and (2) $X(gh)T_{V}
= X(g)X(h)T_{V}$ for every $g,h\in H$. We will always consider $\pmod
{T_{V}}$-representations $X$ of $H$ such that the group $X(H)T_{V}$ acts
transitively on the set of factors $\Pi = \{ V_{1}, \ldots , V_{t} \}$
of the above direct sum decomposition of $V$. Let $b_{X}(H)$ denote
the minimal base size of the group $X(H)T_{V}$. 

In the rest of this subsection we will show that if $X$ is an imprimitive irreducible linear representation of $H$ preserving the decomposition $V = V_{1} \oplus \cdots \oplus V_{t}$, then $b_X(H) \leq 2 (\log |H| / \log |V|) + 17$, or $b(H) \leq 2 (\log |H| / \log |V|) + 17$. Theorem \ref{ez} will follow by specializing to the case when $X$ is the identity. 

If the representation of $H$ is alternating-induced in the sense of \cite[Section 3.2]{DHM}, then $b(H) \leq 2 (\log |H| / \log |V|) + 17$, by \cite[Theorem 3.9]{DHM}. 

By \cite[Section 3.4]{DHM} (and especially by \cite[Corollary 3.15]{DHM}), it is sufficient to establish the proposed bound in the claim for $b(H)$ or for $b_{X}(H)$ in case $X$ is a (not necessarily linear) $\pmod{T_{V}}$-representation of $H$ which is classical-induced satisfying the multiplicity-free condition, in the sense of \cite[Section 3.3]{DHM}. Let $X$ be such a representation of $H$. 

Let $N$ be the kernel of the action of $H$ on $\Pi$. Let $\fX : H \go N_{GL(V(p))}(T_V)/T_V$ be the homomorphism defined by $\fX(h):=X(h)T_V/T_V$ where $V(p)$ denotes the vector space $V$ defined over the prime field (of size $p$) of $\FF q$ and where $h \in H$. In \cite[Section 3.3]{DHM} a bound is given for $b_{X}(H)$. By the argument after \cite[Theorem 3.11]{DHM} (from the second to the fifth paragraphs), we get $b_{X}(H) < (\log |H| / \log |V|) +12$ when $\fX(N) = 1$. So assume that $\fX(N) \not= 1$. 

We use the notation (and a minor modification of the argument) of the paragraph following \cite[Theorem 3.11]{DHM}. In this case $\mathrm{Soc}(\fX (N))$ is a subdirect product of isomorphic simple classical groups $S_{1}, \ldots , S_t$. The linking factor, denoted by $r$, is at most $2$. We have $|N| \geq {|S_{1}|}^{t/r}$. Since $b(K_{1}) \geq 16$, the center of $K_{1}$ has size less than ${|V_{1}|}^{1/16}$ and $|\mathrm{Out}(S_{1})| \leq {|V_{1}|}^{6/16}$, (the latter by \cite[Lemma 7.8]{GMP}). Thus $|S_{1}| \geq |K_{1}|/{|V_{1}|}^{1/2}$, which implies $|N| \geq {|K_{1}|}^{t/r}/{|V|}^{1/2}$. 

Assume that $r=1$. By Theorem \ref{main}, we have 
\[
b(K_{1})\leq 2 (\log |K_{1}| / \log |V_{1}|) + 9
       = 2 (\log ({|K_{1}|}^{t}) / \log |V|) + 9
       \leq 2 (\log |N| / \log |V|) + 10.
\]
By \cite[Theorem 3.4]{DHM} and its proof, we then get $b(H) \leq 2
(\log |H| / \log |V|) + 12$ since $\log |V_{1}| \geq 16$.

Now assume that $r = 2$, so $t=2k$ for some integer $k$. By changing the order (if necessary) of
the summands in the direct sum $V=\oplus_{i=1}^t V_i$, we can assume that
the sets $\Delta_i:=\{V_{2i-1},V_{2i}\}$, for all $i$ with $1\leq i\leq k$, form a system of
$H$-blocks with $S_{\Delta_i} := \mathrm{Soc}(\fX_{\Delta_i}(N))$ a full 
diagonal subgroup of $S_{2i-1}\times S_{2i}$. Here $\fX_{\Delta_{i}}$ is defined as follows. Put $V_{\Delta_{i}}:= V_{2i-1} \oplus V_{2i}$ and let $T_{V_{\Delta_{i}}}$ be defined analogously as $T_{V}$. Let $X_{i} : N_{H}(\Delta_{i}) \rightarrow GL(V_{\Delta_{i}})$ be the $\pmod
{T_{V_{\Delta_{i}}}}$-representation of $N_{H}(\Delta_{i})$ obtained naturally from $X$ by restricting first from $H$ to $N_{H}(\Delta_{i})$ and then from $V$ to $V_{\Delta_{i}}$. Now define $\fX_{\Delta_{i}}$ to be the homomorphism given by $\fX_{\Delta_{i}}(h):= X_{i}(h) T_{V_{\Delta_{i}}} / T_{V_{\Delta_{i}}}$ for $h \in N_{H}(\Delta_{i})$. The multiplicity-free condition (and the definition of $\Delta_{i}$) guarantees that there are no 
functions $\varphi_i:V_{2i}\to V_{2i-1}$ and $\lambda_i:N_H(\Delta_i)
\to \FF q^\times$ such that $\varphi_i$ is a semilinear invertible map
and $$X_{2i-1}(g)=\lambda_i(g)\cdot \varphi_{i} X_{2i}(g) \varphi_i^{-1}$$ for every 
$g\in C_H(\Delta_i)$. By using \cite[Theorem 2.1.4]{KL}, it follows that 
$S_1\simeq PSL(V_1)$. Thus, $|N|\geq (q^{\dim^2 V_1-\dim V_1})^{t/2}/{|V|}^{1/2}$, 
which means that $$b_{X_{\Delta_1}}(N_H(\Delta_1))\leq \dim V_1\leq 
2(\log |N|/\log|V|) +2.$$ Now we can apply \cite[Theorem 3.4]{DHM} to the 
$H$-invariant direct sum decomposition $V = \oplus_{i=1}^{k} V_{\Delta_i}$ to deduce
that $b_X(H)\leq 2(\log |H|/\log |V|) +3+\log 48 < 2(\log |H|/\log |V|) +9$.

This completes the proof of Theorem \ref{ez}.

\section{Proof of Corollary \ref{maincorollary}}
\label{SecCor}

In this section we will prove Corollary \ref{maincorollary}. Let $G$ be a primitive permutation group of degree $n$. For later use, we recall the definition of {\it standard} actions of almost simple primitive groups: these occur for groups with socle an alternating group $\mathrm{Alt}(m)$ or a classical group. In the former case  they are actions on an orbit of subsets or partitions of $\{1,\ldots,m\}$; and in the latter, they are subspace actions (as defined in Section 3).

Assume first that $G$ is $\mathrm{Sym}(m)$ or $\mathrm{Alt}(m)$ for some integer $m \geq 5$. We consider standard actions of $G$.

If the action of $G$ is on a set of partitions of the underlying set of size $m$, then $b(G) \leq \log n + 4$ by  Theorem \ref{bcnprop}. The right hand side is less than $\sqrt{n}$ for $n \geq 256$, and $b(G) < 12$ otherwise. 

Now assume that $G$ acts on $k$-element subsets of $\{ 1, \ldots ,m \}$ for some integer $k$ with $2 \leq k \leq m/2$ and $n = \binom{m}{k}$. Assume also that $b(G) \geq 26$.

Let $k^{2} \leq m$. Then, by Theorem \ref{precise}, $b(G) \leq \frac{2m}{k+1} + 1$. Since $b(G) \geq 26$, we have $m \geq 38$ and $n \geq 625$. For $k =2$ and $n \geq 625$ we get $b(G) \leq \frac{2m}{3} + 1 < \sqrt{n}$. For $k \geq 3$ and $n \geq 625$ we find that $m^{2} \leq \binom{m}{k} = n$ and so $b(G) < \sqrt{n}$. 

Let $k^{2} > m$. Then $k \geq 3$ since $m \geq 5$. By Theorem \ref{precise}, $b(G) \leq \left\lceil\log_{\lceil t\rceil}(m)\right\rceil
\left(\lceil t\rceil-1\right)$ where $t = m/k$. Since $b(G) \geq 26$, this forces $m \geq 18$. In particular $k \geq 5$ and $n \geq 625$. Thus assume that $m \geq 18$, $k \geq 5$, and $n \geq 625$. Let $k \geq 8$. By Theorem \ref{precise} and the assumption $k^{2} > m$, we have 
$b(G) \leq (\log m + 1)t < (2 \log k + 1)t$. Since $t \geq 2$, it follows that $(2 \log k + 1)t < t^{k/2} = {(\frac{m}{k})}^{k/2} < \sqrt{\binom{m}{k}}$.
If $5 \leq k \leq 7$, then $b(G) \leq 6 (\log m + 1)$, by Theorem \ref{precise} (recall that $m<k^2$). The right hand side is less than $\sqrt{\binom{m}{k}}$ provided that $m \geq 18$ and $5 \leq k \leq m/2$. 

Next assume that $G$ is a group as in Section \ref{Sec4.3.1}. Let us use the notation and results of that section. By (\ref{e11}), we have $b(G) < \log k + 1 + n^{1/k}$. For $k \geq 3$ this is less than $\sqrt{n}$ since $n\geq 5^k \geq 125$. Now let $k = 2$. Then $G$ is a subgroup of $\mathrm{Sym}(t) \wr \mathrm{Sym}(2)$ with $n = t^{2}$. In this case $b(G) \leq t$, that is, $b(G) \leq \sqrt{n}$. 

At this point, by \cite[Theorem 1.1]{Mar}, we may assume that $|G| \leq n^{1 + \log n}$ (and $n \geq 26$).

If $G$ is as in Section \ref{Sec4.3.2} or as in Section \ref{Sec4.4}, then $n > 2500$, and so by Theorem \ref{mainresult}, $$b(G) \leq 2 \frac{\log |G|}{\log n} + 24 \leq 2 \log n + 26 < \sqrt{n}.$$ 

If $G$ is as in Section \ref{Sec4.2}, we use Fawcett's \cite{Fawcett} bound $b(G) \leq (\log |G|/\log n) + 3$ to obtain $b(G) \leq \log n + 4 < \sqrt{n}$ provided $n \not= 60$. If $n = 60$, then $|G| \leq 4 n^{2}$, and so $b(G) \leq 5$.  

Assume that $G$ is almost simple. 
If the action of $G$ is non-standard, then $b(G) \leq 7$ by \cite{Burness} and \cite{BurnessLiebeckShalev}. Thus assume that the action of $G$ is standard. In particular the socle of $G$ is a simple classical group or an alternating group. The case when the socle is an alternating group was treated before.  

Let $G$ be an almost simple group with socle a classical group with natural module a vector space of dimension $d$ over some finite field. Since $|G| \leq n^{1 + \log n}$ and $b(G) \leq 2 (\log |G| / \log n) + 16$, the latter by Theorem \ref{generalclassical}, we see that $b(G) < \sqrt{n}$ for $n \geq 1600$. Thus assume that $n < 1600$. By Theorem \ref{classicalmain} and Section \ref{Sec3.3}, we have $b(G) \leq d + 14$. By \cite[Table 5.2.A]{KL}, we find that $d$ must be at most $11$, and so $b(G) \leq 25$.  

Finally, assume that $G$ is of affine type with $n \geq 4$. Put $n = p^{d}$ where $p$ is a prime and $d$ is an integer. Then $b(G) \leq 1 + d$. This is at most $p^{d/2}$ unless $p = 2$ and $2 \leq d \leq 5$. In particular $b(G) \leq 6$ and $n \leq 32$. 

This completes the proof of Corollary \ref{maincorollary}.

\end{document}